\theoremstyle{plain}
\newtheorem{theorem}{Theorem}[section]
\newtheorem{lemma}[theorem]{Lemma}
\newtheorem{corollary}[theorem]{Corollary}
\newtheorem{proposition}[theorem]{Proposition}
\newtheorem*{theorem*}{Theorem}
\newtheorem*{corollary*}{Corollary}
\newtheorem*{proposition*}{Proposition}
\def\thmref#1{{\theoremstyle{plain}\newtheorem*{refthm#1}{Theorem~\ref{#1}}}}
\def\propref#1{{\theoremstyle{plain}\newtheorem*{refprop#1}{Proposition~\ref{#1}}}}
\theoremstyle{definition}
\newtheorem{definition}[theorem]{Definition}
\newtheorem*{claim}{Claim}
\newtheorem*{claim1}{Claim 1}
\newtheorem*{claim2}{Claim 2}
\newtheorem{example}[theorem]{Example}
\theoremstyle{remark}
\newtheorem*{remark*}{Remark}
\numberwithin{equation}{section}
\def\proofclaim{\noindent{\it Proof of claim.}  }
\def\concat{{}^\smallfrown}
\def\upto{\upharpoonright}
\def\len#1{\lvert #1 \rvert}
\def\cl{{\rm cl}}
\def\dist{{\rm dist}}
\def\dom{{\rm dom}}
\def\ran{{\rm ran}}
\def\supp{{\rm supp}}
\def\Aut{{\rm Aut}}
\def\id{{\rm id}}
\def\NN{{\mathbb N}}
\def\ZZ{{\mathbb Z}}
\def\QQ{{\mathbb Q}}
\def\RR{{\mathbb R}}
\def\leqg{\leq_{\rm g}}
\def\leqB{\leq_{\rm B}}
\def\leqRK{\leq_{\rm RK}}
\def\leqRB{\leq_{\rm RB}}
\def\btu{\bigtriangleup}
\def\MSC#1{{\let\thefootnote\relax 
\footnote{{\it 2010 Mathematics Subject Classification.}  #1}
\setcounter{footnote}{0}}}
\def\EMAIL#1{{\let\thefootnote\relax 
\footnote{{\it E-mail address.}  \url{#1}}
\setcounter{footnote}{0}}}
\def\REVISED{{\let\thefootnote\relax
\footnote{Last revised on \today}
\setcounter{footnote}{0}}}
\begin{document}

\title{Universal subgroups of Polish groups}
\author{Konstantinos A. Beros}
\subjclass[2010]{
03E15, % descriptive set theory 
54H11, % topological groups
22A05, % structure of general topological groups
20B30, % symmetric groups
20B35. % subgroups of symmetric groups
}

\begin{abstract}
Given a class $\mathcal C$ of subgroups of a topological group $G$, we say that a subgroup $H \in \mathcal C$ is a {\em universal $\mathcal C$ subgroup} of $G$ if every subgroup $K \in \mathcal C$ is a continuous homomorphic pre-image of $H$.  Such subgroups may be regarded as complete members of $\mathcal C$ with respect to a natural pre-order on the set of subgroups of $G$.  We show that for any locally compact Polish group $G$, the countable power $G^\omega$ has a universal $K_\sigma$ subgroup and a universal compactly generated subgroup.  We prove a weaker version of this in the non-locally compact case and provide an example showing that this result cannot readily be improved.  Additionally, we show that many standard Banach spaces (viewed as additive topological groups) have universal $K_\sigma$ and compactly generated subgroups.  As an aside, we explore the relationship between the classes of $K_\sigma$ and compactly generated subgroups and give conditions under which the two coincide.
\end{abstract}

\maketitle

\setcounter{tocdepth}{1}
\tableofcontents

%%%%%%%%%%%%%%%%%%%%%%%%%%%%%%%%%%%%%%

%%%%%%%%%%%%%%%%%%%%%%%%%%%%%%%%%%%%%%

\section{Introduction}\label{S1}

\subsection{Background}

The study of definable equivalence relations on Polish spaces has been one of the major threads of descriptive set theory for the past thirty years.  In many cases, important equivalence relations arise from algebraic or combinatorial properties of the underlying Polish spaces.  A common situation is that of a coset equivalence relation on a Polish group $G$.  If $H \subseteq G$ is a subgroup, one defines the equivalence relation $E_H$ by 
\[
x E_H y \iff y^{-1}x \in H.
\]
Viewed as a subset of $G \times G$, $E_H$ has the same topological complexity (Borel, analytic, etc...)~as $H$ and its equivalence classes are the left cosets of $H$.  To give a concrete example, consider the equivalence relation $E_0$ on $2^\omega$, defined by 
\[
x E_0 y \iff (\forall^\infty n)(x(n) = y(n)).
\]
Identifying $2^\omega$ with the Polish group $\mathbb Z_2^\omega$, one recognizes $E_0$ as the coset equivalence relation of the subgroup
\[
\mathrm{Fin} = \{ x \in \mathbb Z_2^\omega : (\forall^\infty n) (x(n) = 0)\}.
\]

Given equivalence relations $E,F$ on a space $X$, one often asks whether or not there exists a definable map $f : X \rightarrow X$ reducing $E$ to $F$, i.e., such that
\[
(\forall x,y) (x E y \iff f(x) F f(y) ).
\]
In this situation, ``definable'' is usually (though not always) interpreted to mean Baire- or Borel-measurable.  (In the case that a Borel reduction exists, one writes $E \leqB F$.)

Returning to the setting of groups, suppose that $H,K \subseteq G$ are subgroups of a Polish group $G$ and $\varphi : G \rightarrow G$ is a group homomorphism such that 
\[ 
(\forall x) (x \in H \iff \varphi (x) \in K).
\]
This in turn gives a reduction of $E_H$ to $E_K$ since, by the properties of group homomorphisms,
\[
(\forall x,y ) (y^{-1}x \in H \iff \varphi (y)^{-1} \varphi (x) \in K).
\]
As mentioned above, one is generally interested in reducing maps which are at least Baire-measurable.  Recall, however, that Baire-measurable homomorphisms of Polish groups are automatically continuous (see Kechris \cite[9.10]{KECHRIS.dst}).  Taken together, these observations motivate the following definition.

\begin{definition}
Let $G_1,G_2$ be Polish groups.  Suppose that $H \subseteq G_1$ and $K \subseteq G_2$ are subgroups.  We say that $H$ is \emph{group-homomorphism reducible} to $K$ if, and only if, there exists a continuous homomorphism $\varphi :G_1 \rightarrow G_2$ such that $\varphi^{-1} (K) = H$.  We write $H \leqg K$.
\end{definition}

As mentioned above,
\begin{equation}\label{E1}
H \leqg K \implies E_H \leqB E_K.
\end{equation}
In fact, many Borel reductions among coset equivalence relations derive from corresponding group-homomorphism reductions.  Each of the Borel reductions $E_0 \leqB E_1,E_2,E_3$ arises in this way.\footnote{See Kanovei \cite{KAN.borel.equiv} for definitions of these equivalence relations.}  The following is a representative example.

\begin{example} Recall from above that $E_0$ is the coset equivalence relation of the subgroup $\mathrm{Fin} \subseteq \mathbb Z_2^\omega$.  Consider the equivalence relation $E_2$, where 
\[
x E_2  y \iff \sum_{x(n) \neq y(n)} \frac{1}{n+1} < \infty.
\]
Notice that $E_2$ is the coset equivalence relation of the subgroup 
\[
H = \{ x \in \mathbb Z_2^\omega : \sum_{x(n) \neq 0} \frac{1}{n+1} < \infty\}
\]  
A map witnessing the reduction $E_0 \leqB E_2$ is 
\[
\varphi (x) = x(0) \concat  x(1)^2  \concat  x(2)^4   \concat  x(3)^8 \concat \ldots.
\]  
In other words, $\varphi$ copies the $n$th bit of $x$ to a block of $2^n$ bits of $\varphi (x)$.  Observe that $\varphi$ is actually a continuous group homomorphism of $\mathbb Z_2^\omega$ and $\mathrm{Fin} = \varphi^{-1} (H)$, i.e., $\mathrm{Fin} \leqg H$.  (This follows since each nonzero bit of $x$ increases $\sum \{\frac{1}{n+1} : \varphi (x)(n) \neq 0 \}$ by more than $\frac{1}{2}$.)
\end{example}

In general, however, the converse of \eqref{E1} is false.  Consider the following situation;  suppose that $H,K$ are normal subgroups of a group $G$ and $H \leqg K$, via $\varphi$.  The map $\varphi$ induces an injective homomorphism $\tilde\varphi : G / H \rightarrow G / K$, defined by $\tilde\varphi (\pi_H(x)) = \pi_K (\varphi (x))$, where $\pi_H$ and $\pi_K$ are the quotient maps onto $G /H$ and $G / K$, respectively.  This observation justifies the following two examples.

\begin{example}  Let 
\[
H_2 = \{ x \in \mathbb Z^\omega : (\forall n) (x(n) \mbox{ is divisible by 2}) \}
\]
and
\[
H_3 = \{ x \in \mathbb Z^\omega : (\forall n) (x(n) \mbox{ is divisible by 3}) \}.
\]
Note that $\mathbb Z^\omega / H_2 \cong \mathbb Z_2^\omega$ and $\mathbb Z^\omega / H_3 \cong \mathbb Z_3^\omega$.  Thus $H_2 \nleq_{\rm g} H_3$ and $H_3 \nleq_{\rm g} H_2$, since there are no injective homomorphisms $\mathbb Z_2^\omega \rightarrow \mathbb Z_3^\omega$, or {\it vice versa}.

On the other hand, $E_{H_2} \leqB E_{H_3}$ via the map $f : \mathbb Z^\omega \rightarrow \mathbb Z^\omega$ given by
\[ 
f(x)(n) = \begin{cases}
0 \quad \mbox{if $x(n)$ is even,}\\
1 \quad \mbox{if $x(n)$ is odd,}
\end{cases}
\]
for each $n \in \omega$.  Similarly, $E_{H_3} \leqB E_{H_2}$.
\end{example}

\begin{example}  In \cite{ROSENDAL.cofinal}, Christian Rosendal showed that the coset equivalence relation of the subgroup 
\[
\mathcal B = \{ x \in \mathbb Z^\omega : (\exists M) (\forall n) (\lvert x(n) \rvert \leq M) \}
\]
is Borel-complete for $K_\sigma$ equivalence relations.  In particular, $E_H \leqB E_{\mathcal B}$, for each $K_\sigma$ subgroup of $\mathbb Z^\omega$.  There are, however, $K_\sigma$ subgroups which are not group-homomorphism reducible to $\mathcal B$.  For example, 
\[
2\mathcal B = \{ x \in \mathcal B : (\forall n) (x(n) \mbox{ is even})\} \ \nleq_{\rm g}\  \mathcal B,
\]
since $\mathbb Z^\omega / 2\mathcal B$ has elements of order 2 and $\mathbb Z^\omega / \mathcal B$ has no elements of finite order.
\end{example}

Our work on $\leqg$ was motivated in part by the last example.  In particular we wondered if there would be an analog of Rosendal's theorem for group-homomorphism reductions.  In other words, are there $\leqg$-complete $K_\sigma$ subgroups?  

Naturally, one can ask this question for classes besides $K_\sigma$.  This suggests the following definition.

\begin{definition}
Let $G_1$, $G_2$ be Polish groups and $\mathcal C$ a class of subgroups of $G_1$.  We say that a subgroup $K$ of $G_2$ is \emph{universal for subgroups of $G_1$ in $\mathcal C$} if, and only if, for each subgroup $H \subseteq G_1$, with $H \in \mathcal C$, we have $H \leqg K$. 

In the case that $G_1 = G_2$ and $K \in \mathcal C$, we simply say that $K$ is a \emph{universal $\mathcal C$ subgroup of $G_1$}.
\end{definition}

In this context, the simplest classes to study are those for which membership of a subgroup $H$ in the class is determined by the nature of a generating set for $H$.  In the present paper, we will consider two classes of subgroups with this property, namely the classes of $K_\sigma$ and compactly generated subgroups.  In \cite{BEROS.universal.projective}, we take up the study of $\mathbf \Sigma^1_n$ and $\mathbf \Pi^1_1$ subgroups.

\subsection{Summary of results}\label{summary}

Our main results concern the existence of universal $K_\sigma$ and universal compactly generated subgroups in the countable powers and products of various Polish groups.

The following is our principal positive result.

\begin{theorem}\label{T1}\thmref{T1}
Let $(G_n)_{n \in \omega}$ be a sequence of locally compact Polish groups, each term of which occurs infinitely often (up to isomorphism).  We have the following;
\begin{enumerate}
\item $\prod_n G_n$ has a universal compactly generated subgroup.
\item $\prod_n G_n$ has a universal $K_\sigma$ subgroup.
\end{enumerate}
\end{theorem}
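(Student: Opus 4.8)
The plan is to prove both parts in parallel, treating the compactly generated case (1) as the single-generator version of the $K_\sigma$ case (2). The first move is to cash in the recurrence hypothesis: since each isomorphism type among the $G_n$ appears infinitely often, the coordinates of $G := \prod_n G_n$ can be partitioned into countably many infinite blocks, each reproducing the whole sequence up to isomorphism, so that $G \cong G^\omega$ and, more flexibly, $G$ carries infinitely many independent coordinate-copies of every $G_n$. I will also use that a locally compact second-countable group is $\sigma$-compact; fixing for each $n$ an increasing exhaustion $G_n = \bigcup_m C_{n,m}$ by symmetric compact sets containing the identity, the ``boxes'' $D_f := \prod_n C_{n,f(n)}$ (for $f \in \omega^\omega$) are compact and cofinal in $K(G)$ under inclusion. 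Consequently every $K_\sigma$ subgroup $H$ can be written as $H = \bigcup_m K_m$ with $K_m$ increasing, symmetric, compact, $K_m K_m \subseteq K_{m+1}$ and $K_m \subseteq D_{f_m}$, so $H$ is coded by a point of a Polish parameter space; a compactly generated $H = \langle C \rangle$ is coded by a single box containing $C$.

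The second step is to realize every such $H$ as $\varphi_H^{-1}(K)$ for one fixed subgroup $K$ in the relevant class and a continuous homomorphism $\varphi_H : G \rightarrow G$ depending on $H$. The key technical point---and the design constraint that drives the whole construction---is that a homomorphism cannot deposit a constant ``code'' into a fixed block of coordinates: if $\varphi_H(x) = (p_H, \iota(x))$ then homomorphy forces $p_H = p_H^2$, hence $p_H = e$. So the code for $H$ must be carried by the map $\varphi_H$ itself rather than written into $G$, and $K$ must be a single subgroup rich enough that varying $\varphi_H$ over continuous homomorphisms recovers the whole class. I will build $\varphi_H$ as a copying homomorphism, in the spirit of the map $\varphi(x)=x(0) \concat x(1)^2 \concat x(2)^4 \concat \cdots$ from the introduction: using the infinitely many same-type copies of each $G_n$, the map $\varphi_H$ distributes the content of each coordinate of $x$ across a block of target coordinates, with the block schedule determined by the exhaustion $(f_m)$ coding $H$.

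The universal subgroup $K$ is then defined so that membership is a ``diagonal threshold'' condition on these blocks, designed to pull back exactly to $H$: for (1), $K = \langle \mathbb C \rangle$ for a single fixed diagonal box-type compact set $\mathbb C$, and for (2), $K = \bigcup_m \langle \mathbb C_m \rangle$ (an increasing compact union) so that $K$ is visibly $K_\sigma$. The final verification splits into three checks: that $K$ genuinely lies in the class (compactly generated, resp.\ $K_\sigma$); that each $\varphi_H$ is a continuous homomorphism; and the two inclusions $\varphi_H^{-1}(K) \subseteq H$ and $H \subseteq \varphi_H^{-1}(K)$.

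I expect the main obstacle to be precisely the reconciliation forced by the no-constant-code phenomenon above: arranging a single fixed $K$, inside the class, whose preimages under genuine homomorphisms sweep out every member of the class, while getting the pullback to equal $H$ on the nose rather than an approximation. Concretely, the delicate part is choosing the copying multiplicities and the diagonal thresholds so that the group operation interacts correctly with the compact exhaustion---ensuring that a point $x$ lands in $H$ if and only if its (multiply-copied) image clears the fixed thresholds of $K$ at every level---without the homomorphism constraint $\varphi_H(xy) = \varphi_H(x)\varphi_H(y)$ forcing $K$ to be either too large to be universal or too small to lie in the class. The repetition hypothesis is what makes this balance achievable, by supplying enough independent coordinates to absorb the per-level multiplicities.
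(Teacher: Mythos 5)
Your outline gets several things right that do match the paper's strategy --- the reduction to $G \cong G^\omega$ via the repetition hypothesis, the ``copying homomorphism'' with identity padding, and the no-constant-code observation (which is exactly why the paper's padding is by $\mathbf 1$) --- but it breaks down at the step where you declare that a compactly generated subgroup $H = \langle C \rangle$ ``is coded by a single box containing $C$.'' A box containing the generator records only the \emph{size} of $C$, not its \emph{shape}, and this data does not determine $\langle C \rangle$; since your map $\varphi_H$ and your fixed target $K$ are built from this code alone, the pullback $\varphi_H^{-1}(K)$ cannot come out right. Concretely, take $G_n = \mathbb Z$ for every $n$ (a legitimate instance of the hypothesis). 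The compact sets $\{0,\pm 1,\pm 2\}^\omega$ and $\{0,\pm 2\}^\omega$ have the same smallest containing box, yet they generate different subgroups: the group $\mathcal B$ of bounded sequences and the group $2\mathcal B$ of bounded sequences with all entries even. Worse, no ``diagonal threshold'' target can be universal at all: for a box $D_f = \prod_n \{-f(n), \ldots , f(n)\}$ one checks that $\langle D_f \rangle = \bigcup_m \{ x : (\forall n)\, \lvert x(n)\rvert \leq m f(n)\}$, so $\mathbb Z^\omega / \langle D_f \rangle$ is torsion-free (if $kx$ is a sum of $m$ elements of $D_f$, then $\lvert x(n)\rvert \leq m f(n)$ for all $n$, so $x$ is again such a sum); homomorphic preimages of subgroups with torsion-free quotient again have torsion-free quotient, whereas $\mathbb Z^\omega / 2\mathcal B$ has $2$-torsion. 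Hence the compactly generated subgroup $2\mathcal B$ is not $\varphi^{-1}(\langle D_f \rangle)$ for any continuous homomorphism $\varphi$ and any $f$ --- this is the same obstruction the paper uses in the introduction to show $2\mathcal B \nleq_{\rm g} \mathcal B$. The repair is the paper's key device in Lemma~\ref{loki}: fix a countable dense $D \subseteq G$ and code a compact set $K$ by the sequence of \emph{finite sets} $\mathcal B_k \subseteq D^k$ of $\approx_k$-approximations to initial segments of members of $K$ (finite by Lemma~\ref{bounded}); the universal compact set $K_0$ then devotes one block $I^k_j$ to \emph{every} finite subset $A^k_j \subseteq D^k$ containing $\mathbf 1^k$, and the copying homomorphism writes $x \upto k$ into exactly those blocks whose labels match $\mathcal B_k$, padding the rest with $\mathbf 1^k$.

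There are two further gaps. First, even with a correct code, arranging that the generators pull back exactly, $\varphi^{-1}(K_0) = K$, yields only the easy inclusion $\langle K \rangle \subseteq \varphi^{-1}(\langle K_0 \rangle)$: a point of $\langle K_0 \rangle$ in the range of $\varphi$ is a product of elements of $K_0$ that need not individually be related to $K$. The paper must prove $\varphi^{-1}(w[K_0]) = w[K]$ for \emph{every} group word $w$, and the hard inclusion is a genuine compactness argument (extract convergent subsequences in $K$ from the block data and pass to the limit through the continuous map induced by $w$); your outline names this inclusion as a goal but supplies no mechanism for it. Second, for part (2) your proposed target $\bigcup_m \langle \mathbb C_m \rangle$ with compact $\mathbb C_m$ cannot work as stated: a single homomorphism $\psi$ reducing $B = \bigcup_n \langle B_n \rangle$ must emit data for all $n$ simultaneously, so for $x \in \langle B_{n_0} \rangle$ the coordinates serving indices $n < n_0$ carry uncontrolled group elements, and the target must forgive finitely many uncontrolled blocks; pieces of the target that do this contain entire factor groups, hence are closed but not compact, so the natural target is $F_\sigma$ rather than $K_\sigma$. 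The paper recovers $K_\sigma$-ness only by intersecting the $F_\sigma$ subgroup $\bigcup_n \langle A_n \rangle$ with a second subgroup $\langle \tilde A \rangle$ defined by a growth condition, and by threading the reduction $\psi$ through both layers at once; this two-layer structure is the real content of part (2) and is absent from your plan.
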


Although stated for products, Theorem~\ref{T1} implies that the countable power of any locally compact group has universal $K_\sigma$ and compactly generated subgroups, e.g., $\mathbb Z_2^\omega$, $\mathbb Z^\omega$, $\mathbb R^\omega$, $\mathbb Q^\omega$ (with the discrete topology on $\mathbb Q$) and $\mathbb T^\omega$ (where $\mathbb T$ is the unit circle in $\mathbb C$). 

For the case of groups which are not locally compact, we have an ``approximation'' of the last theorem.

\begin{theorem}\label{T2}\thmref{T2}
For every Polish group $G$, there is an $F_\sigma$ subgroup $H \subseteq G^\omega$ which is universal for $K_\sigma$ subgroups of $G^\omega$, i.e., every $K_\sigma$ subgroup of $G^\omega$ is a continuous homomorphic pre-image of $H$.
\end{theorem}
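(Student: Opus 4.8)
The plan is to pull every $K_\sigma$ subgroup back to a single $F_\sigma$ subgroup after using the identification $G^\omega\cong\prod_n G^\omega$ (via a bijection $\omega\cong\omega\times\omega$) to make countably many independent copies of $G^\omega$ available as ``scratch space''.

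First I would normalize $K$. Since $K$ is $\sigma$-compact I may write $K=\bigcup_n C_n$ with each $C_n$ compact, symmetric, containing $e$, and increasing; replacing $C_n$ by the (compact) image of $(C_0\cup\cdots\cup C_n)^n$ under the multiplication map, I may further assume $C_nC_m\subseteq C_{n+m}$. This yields a lower semicontinuous, symmetric, subadditive length $\gamma_K(x)=\inf\{n:x\in C_n\}$ with compact sublevel sets $\{\gamma_K\le n\}=C_n$ and $K=\{\gamma_K<\infty\}$. The useful reformulation is the telescoping identity $\gamma_K(x)=\sum_n\mathbf 1[x\notin C_n]$, so that $x\in K$ exactly when $x$ enters, and remains inside, the $C_n$ from some point on.

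For the target I would fix a compatible left-invariant metric $d\le1$ on $G$, the associated subadditive gauge $r(z)=\sum_k d(z(k),e)$ on $G^\omega$, and build $H$ as an $F_\sigma$ subgroup $\bigcup_M F_M$, where the $F_M\subseteq\prod_n G^\omega$ are closed, symmetric, contain $e$, increase, and satisfy $F_MF_M\subseteq F_{M+1}$ (so that $H$ is automatically a subgroup and $F_\sigma$). The governing requirement is that, given $K$, I can choose continuous homomorphisms $\beta_n\colon G^\omega\to G^\omega$ and set $\varphi(x)=(\beta_n(x))_n$ so that $\varphi^{-1}(H)=K$; the $n$th copy is meant to ``spend'' gauge precisely on the levels $n$ at which $x\notin C_n$, so that the telescoped cost is finite — and $\varphi(x)$ lands in some $F_M$ — exactly when $x\in K$. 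The reason the conclusion is only $F_\sigma$, rather than the $K_\sigma$ of Theorem~\ref{T1}, is that the relevant ``balls'' $F_M$ are closed but, without local compactness, not compact; indeed a genuinely universal $K_\sigma$ subgroup cannot exist in general, since a uniform product $\bigcup_M\prod_n C_M$ of compacta is $K_\sigma$ by Tychonoff and would yield one from any countable $\leqg$-cofinal family of compacta, contradicting the example promised in the introduction.

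The hard part is the construction of the homomorphisms $\beta_n$ together with the sets $F_M$, and verifying the exact equality $\varphi^{-1}(H)=K$. The essential difficulty is that a continuous homomorphism cannot ``detect'' a compact set that is not a subgroup: if $\rho$ is any subadditive gauge and $x,y\in C_n$ with $xy\notin C_n$, then $\rho(\varphi(xy))\le\rho(\varphi(x))+\rho(\varphi(y))$ forbids $\rho\circ\varphi$ from jumping as $x$ leaves $C_n$, so no single block can test membership in $C_n$ sharply. This is also why the naive gauge subgroups fail — for $G=\mathbb Z$ the summable-gauge subgroup collapses to the finite-support group, while the uniform-bound subgroup cannot count levels — and it forces the encoding to use the entire filtration and all the fresh copies at once rather than one test per level. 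I expect the resolution to combine a telescoping distribution of homomorphic images of $x$ across the coordinates $k\ge n$ of the $n$th copy with a careful choice of the closed exhaustion $(F_M)$, arranging that membership of $x$ in $C_j$ produces a uniform bound depending only on $j$ while $x\notin K$ defeats every bound; the bookkeeping making $\varphi^{-1}(H)=K$ hold on the nose, and not merely up to the box-approximation $\prod_m\pi_m(C_n)$ of each $C_n$, is where the main effort will go.
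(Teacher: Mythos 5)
There is a genuine gap: the universal subgroup $H$ is never constructed. Your proposal fixes a framework (the identification $G^\omega \cong \prod_n G^\omega$, the normalization $K = \bigcup_n C_n$ with $C_n C_m \subseteq C_{n+m}$, and the requirement that $H = \bigcup_M F_M$ with $F_M$ closed, symmetric, increasing and $F_M F_M \subseteq F_{M+1}$), but the sets $F_M$ and the homomorphisms $\beta_n$ --- the objects whose existence \emph{is} the theorem --- are left unspecified, and you explicitly defer their construction and the verification of $\varphi^{-1}(H) = K$ as ``where the main effort will go.''  Since $H$ must be defined once and for all, before any particular $K$ is presented, this deferral is not bookkeeping: it is the entire proof.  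Moreover, the one concrete mechanism you do put forward (subadditive gauges with closed sublevel sets, the $n$th copy ``spending gauge'' on the levels at which $x \notin C_n$) is exactly what your own obstruction paragraph rules out: a subadditive quantity composed with a homomorphism cannot jump as $x$ exits a compact set that is not a subgroup, and by the same token it cannot record the set of levels $n$ with $x \notin C_n$.  As written, the plan points at a device that your own analysis shows cannot succeed.

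For comparison, the paper's construction uses no gauges at all.  It indexes the coordinates by pairs $(F,n)$, where $F$ ranges over the countable set $\mathcal F$ of finite families of basic open sets with $\mathbf 1 \in \bigcup F$, and declares $\xi \in H$ if and only if there exist $n$ and a group word $w$ such that $\xi (F,n') \in \cl \bigl( w [\textstyle\bigcup F] \bigr)$ for every $F \in \mathcal F$ and every $n' \geq n$; this set is $F_\sigma$ (a countable union, over pairs $(n,w)$, of closed sets) and is a subgroup (concatenation of words handles products, inverse words handle inverses).  Given $K = \bigcup_n K_n$, one chooses for each $n,k$ a finite $1/k$-cover $F_{n,k} \in \mathcal F$ of $K_n$ by basic sets each meeting $K_n$, and defines $\psi (x)(F,n) = x$ if $F = F_{n,k}$ for some $k$, and $\mathbf 1$ otherwise.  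Compactness of the $K_n$ enters only in the pullback direction, to upgrade ``for every $k$, $x$ is within $1/k$ of a $w$-word in points $1/k$-close to $K_n$'' to ``$x$ equals a $w$-word in points of $K_n$,'' whence $x \in \langle K_n \rangle \subseteq K$.  This countable dictionary of finite open approximations of compact sets, combined with group words to encode generation, is the idea missing from your sketch.
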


We prove Theorems~\ref{T1} and \ref{T2} in Sections~\ref{S4.2} and \ref{S5}, respectively.  In Section~\ref{S6}, we prove the following counterpoint to Theorem~\ref{T2}.

\begin{theorem}\label{T3}\thmref{T3}
There is no universal $K_\sigma$ subgroup of $S_\infty^\omega$.
\end{theorem}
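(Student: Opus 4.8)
The plan is to argue by contradiction. Suppose $H \subseteq S_\infty^\omega$ is a universal $K_\sigma$ subgroup and write $H = \bigcup_n K_n$ with each $K_n$ compact and $K_n \subseteq K_{n+1}$. Since every compact subgroup is in particular $K_\sigma$, it suffices to produce a single $K_\sigma$ subgroup $K \subseteq S_\infty^\omega$ with $K \nleq_{\rm g} H$.

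The first step, which I regard as the most robust, is to show that any reduction to a $K_\sigma$ subgroup is necessarily \emph{injective}. Suppose $K \leqg H$ via a continuous homomorphism $\varphi$, so $\varphi^{-1}(H) = K$. Since $1 \in H$, we have $\ker\varphi \subseteq \varphi^{-1}(H) = K$, so $\ker\varphi$ is a closed normal subgroup of $S_\infty^\omega$ contained in the $K_\sigma$ subgroup $K$. I would then prove that no nontrivial closed normal subgroup $N \trianglelefteq S_\infty^\omega$ is $K_\sigma$. Fix $x \in N$ and a coordinate $n$ with $x(n) \neq 1$. Each factor $G_n = \{ y : y(m) = 1 \text{ for } m \neq n \}$ is normal in $S_\infty^\omega$, and for $g \in G_n$ with $g(n)$ chosen not to commute with $x(n)$ (possible since $S_\infty$ is centerless), the commutator $[g,x]$ lies in $N \cap G_n$ and is nontrivial. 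By the Schreier--Ulam--Baer classification, a nontrivial normal subgroup of $S_\infty$ contains the finitary alternating group; since $N \cap G_n$ is closed and the finitary alternating group is dense, $N \cap G_n = G_n$, i.e. $N \supseteq G_n \cong S_\infty$. As $S_\infty$ is not $\sigma$-compact while every closed subgroup of a $K_\sigma$ group is $K_\sigma$, this contradicts $N$ being $K_\sigma$. Hence $\ker\varphi = 1$, $\varphi$ is injective, and in particular $K \cong \varphi(K) = \varphi(S_\infty^\omega) \cap H$.

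It follows that a necessary condition for $K \leqg H$ is that $K$ embed topologically as a (compact, when $K$ is compact) subgroup of $H$. The delicate point --- and the reason the theorem is not trivial --- is that this necessary condition can never by itself be violated. The product $U = \prod_k \mathrm{Sym}(k)$ is a profinite group into which every second-countable profinite group embeds topologically (write such a group as an inverse limit of finite groups, embed each finite group into a symmetric group, and collect the factors), and a copy of $U$ may sit inside $H$ as a compact subgroup. Thus $H$ can contain a topological copy of every compact subgroup of $S_\infty^\omega$, so neither an embeddability argument nor a cardinality count can succeed on its own. Any proof must instead exploit the full force of the exact preimage condition $\varphi^{-1}(H) = K$ for a globally defined injective endomorphism $\varphi$ of $S_\infty^\omega$.

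The concluding step I propose is a diagonalization against all injective continuous endomorphisms of $S_\infty^\omega$, tested on a family of compact subgroups of product type: for a sequence $(b_n)$ set $C_{\vec b} = \prod_n \mathrm{Sym}(B_n)$, where the $n$-th copy of $S_\infty$ permutes a finite set $B_n$ of size $b_n$. Each $C_{\vec b}$ is compact, hence $K_\sigma$, and the aim is to choose $\vec b$ so that $\varphi^{-1}(H) = C_{\vec b}$ fails for every injective continuous $\varphi$. The available rigidity is that each output coordinate of $\varphi$, restricted to each input factor, is a continuous homomorphism $S_\infty \to S_\infty$, hence (closed kernel together with the normal subgroup classification) is either trivial or injective. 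The main obstacle is that this rigidity is genuinely weak: because $S_\infty^\omega$ embeds as a closed subgroup of $S_\infty$, injective continuous endomorphisms of $S_\infty^\omega$ are abundant, and an output coordinate may depend on infinitely many input coordinates, so there is no reduction to finitely many coordinates to diagonalize against. Compounding this, a compact subgroup of $H$ need not lie in any single $K_n$ (a profinite group can be an increasing union of proper closed subsets), so one cannot localize $\varphi(C_{\vec b})$ to a single compact piece. Overcoming these two difficulties --- controlling arbitrary injective endomorphisms finely enough that the exact preimage condition forces an unsatisfiable demand on the fixed exhaustion $(K_n)$ --- is the crux of the argument.
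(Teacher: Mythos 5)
Your opening step is correct and, in fact, coincides with the paper's starting point: the argument that $\ker\varphi$ is a closed normal subgroup contained in a $K_\sigma$ set, together with the Schreier--Ulam classification, forcing injectivity of any reduction, is exactly the role played by Lemma~\ref{injecto} (your commutator trick adapts it to $S_\infty^\omega$; the paper instead passes to $S_\infty$ itself via Proposition~\ref{P2}, since each of $S_\infty$ and $S_\infty^\omega$ is a closed subgroup of the other). Your second paragraph is also a fair diagnosis of why soft arguments cannot work. But the proof stops precisely where the theorem begins: your final paragraph proposes a diagonalization, names the two obstacles --- (i) injective continuous endomorphisms of $S_\infty^\omega$ are too abundant and unstructured to control coordinate-by-coordinate, and (ii) the image of a compact set need not lie in any single $K_n$ of the fixed exhaustion --- and then declares that overcoming them ``is the crux of the argument'' without overcoming them. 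As written, nothing in your text excludes the existence of a universal $H$; an accurate list of difficulties is not a proof, and no candidate $\vec b$ is ever produced, nor any property of $C_{\vec b}$ that an injective endomorphism must transport into $\bigcup_n K_n$ but cannot.

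For comparison, the paper resolves exactly your two obstacles with two concrete devices. Obstacle (ii) is handled by a Baire-category localization (Lemma~\ref{single beta}): working in $S_\infty$ with the canonical compact sets $K_\alpha = \{ f : f, f^{-1} \leq \alpha\}$, the image $\Phi(K_\alpha)$ is compact, so some $K_{\beta_m}$ is non-meager relative to it and hence contains a relatively open piece; Lemma~\ref{fin ext} then yields a finite-support $\pi \in K_\alpha$ with $\Phi(\mathcal U(\pi) \cap K_\alpha) \subseteq K_{\beta_m}$, and since $\pi^{n!} = \id$, every element of $\mathcal U_n \cap K_\alpha$ is a bounded-length word in elements whose images lie in $K_{\beta_m}$, giving $\Phi(\mathcal U_n \cap K_\alpha) \subseteq K_{\beta_r}$ for a single $r$. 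Obstacle (i) is handled not by analyzing how $\Phi$ acts on coordinates (which, as you observe, is hopeless) but by an invariant that any injective homomorphism must preserve: \emph{chains of square roots} of involutions (Lemma~\ref{big alpha}). If $f$ is a product of disjoint $2$-cycles and $n \in \supp(f)$, then any chain $f_0 = f$, $f_j^2 = f_{j-1}$ lying in $K_\alpha$ has length at most $\alpha(n)$, because the values $f_j(n)$ are pairwise distinct. One then builds $\alpha$ so that the involutions $[k,k+1]\,[k+2,k+3]\cdots$ admit extremely long chains of roots inside $K_\alpha$, and uses order and commutation arguments to show that their $\Phi$-images are involutions possessing a support point bounded by $\beta^{4k}(a)$, hence admitting only short chains of roots in $K_\beta$ --- a contradiction. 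Supplying an invariant of this kind (or some substitute) is the missing content of your proposal; note also that the paper's witnessing objects are the sets $K_\alpha$, which are compact but \emph{not} subgroups (the subgroup refuting universality is $\langle K_\alpha \rangle$), whereas your proposed test family $\prod_n \mathrm{Sym}(B_n)$ consists of compact groups whose elements all have bounded displacement in each block, so it is not clear the root-chain obstruction, or any similar one, can even be made to live on it.
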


Theorem~\ref{T3} is, in essence, a ``sharpness'' result for Theorems~\ref{T1} and \ref{T2}, suggesting that neither can readily be improved.

The next two propositions will allow us to apply Theorems~\ref{T1} and \ref{T2} in a broader context, e.g., to groups which are not of the form $G^\NN$.

\begin{proposition}\label{P1}\propref{P1}
Suppose that $G_1$ and $G_2$ are topological groups such that there exist continuous injective homomorphisms $\varphi_1 : G_1 \rightarrow G_2$ and $\varphi_2 : G_2 \rightarrow G_1$.  Let $\mathcal C$ be a class of subgroups which is closed under continuous homomorphic images.  If $G_1$ has a universal $\mathcal C$ subgroup, then $G_2$ also has a universal $\mathcal C$ subgroup.
\end{proposition}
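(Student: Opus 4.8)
The plan is to take the forward image $V = \varphi_1(U)$ of a universal $\mathcal C$ subgroup $U \subseteq G_1$ as the candidate universal $\mathcal C$ subgroup of $G_2$, and then to reduce an arbitrary $K \in \mathcal C$ in $G_2$ to $V$ by routing the given universality of $U$ through the two injections. Everything will come down to the elementary fact that an injective homomorphism $\varphi$ satisfies $\varphi(a) \in \varphi(S) \iff a \in S$ for any subgroup $S$.

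First I would check that $V$ is an admissible witness. Since $\varphi_1 : G_1 \to G_2$ is a continuous homomorphism and $U \in \mathcal C$, the image $V = \varphi_1(U)$ is a subgroup of $G_2$ which, by the assumed closure of $\mathcal C$ under continuous homomorphic images, again belongs to $\mathcal C$. Thus $V$ is a genuine member of $\mathcal C$, as demanded of a universal $\mathcal C$ subgroup of $G_2$.

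Next I would fix an arbitrary $K \subseteq G_2$ with $K \in \mathcal C$ and aim to produce a continuous homomorphism $\chi : G_2 \to G_2$ with $\chi^{-1}(V) = K$, i.e. $K \leqg V$. The idea is to push $K$ into $G_1$ along $\varphi_2$: the subgroup $\varphi_2(K) \subseteq G_1$ is once more a continuous homomorphic image of $K$, hence lies in $\mathcal C$. Since $U$ is universal for $\mathcal C$ subgroups of $G_1$, there is a continuous homomorphism $\psi : G_1 \to G_1$ with $\psi^{-1}(U) = \varphi_2(K)$. I would then set
\[
\chi = \varphi_1 \circ \psi \circ \varphi_2 : G_2 \to G_2,
\]
which is continuous as a composition of continuous homomorphisms.

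The only point requiring real care — and where both injectivity hypotheses do the work — is the verification that $\chi^{-1}(V) = K$. For any $x \in G_2$ we have $\chi(x) \in V = \varphi_1(U)$ exactly when $\varphi_1\bigl(\psi(\varphi_2(x))\bigr) \in \varphi_1(U)$; injectivity of $\varphi_1$ makes this equivalent to $\psi(\varphi_2(x)) \in U$, i.e. to $\varphi_2(x) \in \psi^{-1}(U) = \varphi_2(K)$; and injectivity of $\varphi_2$ makes this equivalent to $x \in K$. Hence $\chi^{-1}(V) = K$ and $K \leqg V$. Since $K$ was arbitrary, $V$ is a universal $\mathcal C$ subgroup of $G_2$. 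I do not expect any serious obstacle here beyond tracking the injectivity correctly at the two starred equivalences; this is precisely why the statement asks for \emph{injective} (not merely continuous) homomorphisms in both directions, so that preimages and images of $U$ and $K$ translate faithfully between $G_1$ and $G_2$.
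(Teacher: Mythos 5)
Your proof is correct and follows exactly the same route as the paper's: take $\varphi_1(U)$ as the candidate, push $K$ into $G_1$ via $\varphi_2$, apply universality of $U$ to get $\psi$, and use the composition $\varphi_1 \circ \psi \circ \varphi_2$ together with injectivity of both maps. The only difference is that you spell out the injectivity bookkeeping that the paper leaves implicit, which is a fine addition but not a different argument.
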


\begin{proof}
Let $H \subseteq G_1$ be a universal $\mathcal C$ subgroup of $G_1$.  We will show that $H' = \varphi_1(H)$ is a universal $\mathcal C$ subgroup of $G_2$.  Indeed, fix a $\mathcal C$ subgroup $K \subseteq G_2$.  By the closure properties of $\mathcal C$, $\varphi_2 (K)$ is a $\mathcal C$ subgroup of $G_1$.  Let $\psi : G_1 \rightarrow G_1$ be a continuous endomorphism such that $\varphi_2 (K) = \psi^{-1} (H)$.  The injectivity of $\varphi_1$ and $\varphi_2$ implies that $K = (\varphi_1 \circ \psi \circ \varphi_2)^{-1} (H')$.  This completes the proof.
\end{proof}

\begin{remark*}
The hypothesis of Proposition~\ref{P1} is a very weak form of bi-embeddability, since the maps $\varphi_1$ and $\varphi_2$ need not have continuous inverses and hence need not be isomorphisms between their domains and ranges.
\end{remark*}

Since the class of $F_\sigma$ subgroups is not closed under continuous homomorphic images, we require a stronger form of mutual embeddability to apply Theorem~\ref{T2} to a larger class of groups.  The next proposition follows by the same proof as Proposition~\ref{P1}.

\begin{proposition}\label{P2}\propref{P2}
Suppose that $G_1$ and $G_2$ are Polish groups, each of which is isomorphic to a closed subgroup of the other.  If $G_1$ has a universal $F_\sigma$ subgroup for $K_\sigma$, then so does $G_2$.
\end{proposition}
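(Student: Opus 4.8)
The plan is to run the proof of Proposition~\ref{P1} essentially verbatim, inserting one extra topological observation to control the class of $F_\sigma$ subgroups. By hypothesis there are topological group isomorphisms $\varphi_1 : G_1 \to G_2$ and $\varphi_2 : G_2 \to G_1$ onto closed subgroups; in particular each $\varphi_i$ is a continuous injective homomorphism which is moreover a homeomorphism onto its (closed) image. Let $H \subseteq G_1$ be the given $F_\sigma$ subgroup universal for $K_\sigma$ subgroups of $G_1$, and set $H' = \varphi_1(H)$. As in Proposition~\ref{P1}, I would show that $H'$ is the desired universal subgroup of $G_2$, first checking that it lies in the class $F_\sigma$, then verifying universality.

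The first step is the one genuinely beyond Proposition~\ref{P1}: I must confirm that $H'$ is $F_\sigma$ in $G_2$. Writing $H = \bigcup_n F_n$ with each $F_n$ closed in $G_1$, the fact that $\varphi_1$ is a homeomorphism onto the \emph{closed} subgroup $\varphi_1(G_1)$ means each $\varphi_1(F_n)$ is closed in $\varphi_1(G_1)$ and hence closed in $G_2$; therefore $H' = \bigcup_n \varphi_1(F_n)$ is $F_\sigma$. This is precisely where the weak bi-embeddability hypothesis of Proposition~\ref{P1} would fail, since a merely continuous injective homomorphism need not carry $F_\sigma$ sets to $F_\sigma$ sets, whereas the closed-embedding hypothesis guarantees it.

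For universality, I would fix a $K_\sigma$ subgroup $K \subseteq G_2$. Because $\varphi_2$ is continuous it sends compact sets to compact sets, so $\varphi_2(K)$ is a $K_\sigma$ subgroup of $G_1$ (note that here continuity alone suffices; no closedness is needed). Applying the universality of $H$, I obtain a continuous endomorphism $\psi : G_1 \to G_1$ with $\varphi_2(K) = \psi^{-1}(H)$. The injectivity of $\varphi_1$ and $\varphi_2$ then yields $K = (\varphi_1 \circ \psi \circ \varphi_2)^{-1}(H')$, by the same computation as in Proposition~\ref{P1}: one has $\varphi_1^{-1}(H') = H$, then $\psi^{-1}(H) = \varphi_2(K)$, and finally $\varphi_2^{-1}(\varphi_2(K)) = K$. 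Since $\varphi_1 \circ \psi \circ \varphi_2 : G_2 \to G_2$ is a continuous homomorphism, this witnesses $K \leqg H'$.

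I expect the only real content beyond copying Proposition~\ref{P1} to be the $F_\sigma$ verification of the second paragraph; the reduction diagram itself is a formal rerun. The reason the hypothesis must be strengthened to ``isomorphic to a closed subgroup'' is exactly to keep $H'$ inside $F_\sigma$, since, unlike $K_\sigma$, that class is not preserved under arbitrary continuous homomorphic images. Correspondingly, the push-forward producing $\varphi_2(K)$ as a $K_\sigma$ subgroup of $G_1$ goes through under mere continuity, so the asymmetry between the image class ($K_\sigma$) and the universal-object class ($F_\sigma$) is harmless.
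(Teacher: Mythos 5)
Your proposal is correct and is essentially the paper's own argument: the paper simply states that Proposition~\ref{P2} ``follows by the same proof as Proposition~\ref{P1},'' which is exactly what you carry out, with the reduction $K = (\varphi_1 \circ \psi \circ \varphi_2)^{-1}(H')$ reproduced verbatim. Your explicit verification that $H' = \varphi_1(H)$ stays $F_\sigma$ --- using that a closed subset of the closed subgroup $\varphi_1(G_1)$ is closed in $G_2$ --- is precisely the point the paper leaves implicit and the reason the hypothesis is strengthened to closed embeddings.
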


As an example, consider the Polish group $\mathbf c_0^\omega$, where $\mathbf c_0$ is the additive group of sequences of real numbers tending to zero (with the sup-metric).  In the weak sense of Proposition~\ref{P1}, $\mathbf c_0^\omega$  is bi-embeddable with $\RR^\omega$.  (We give more details in Section~\ref{S7}.)  Proposition~\ref{P1} thus implies that $\mathbf c_0^\omega$ has universal $K_\sigma$ and universal compactly generated subgroups, even though $\mathbf c_0$ is not a product of locally compact groups.

Consider the groups $S_\infty$ and $\Aut (\QQ)$, both of which embed their countable powers as closed subgroups.  Theorem~\ref{T2} and Proposition~\ref{P2} together yield an $F_\sigma$ subgroup of $S_\infty$ (resp., of $\Aut (\QQ)$) which is universal for $K_\sigma$ subgroups of $S_\infty$ (resp., of $\Aut (\QQ)$).

In Section~\ref{S7}, we give more details of such examples and provide further applications of Theorem~\ref{T1} in conjunction with Proposition~\ref{P1}.  We will also show that a class of standard Banach and Hilbert spaces, viewed as complete topological groups, have universal $K_\sigma$ and compactly generated subgroups.

Section~\ref{S4.1} is a brief detour exploring the relationship between $K_\sigma$ and compactly generated subgroups.  We obtain the following result.

\begin{theorem}\label{T4}\thmref{T4}
Suppose that $G$ is countable discrete group.  Every $K_\sigma$ subgroup of $G^\omega$ is compactly generated if, and only if, every subgroup of $G$ is finitely generated.
\end{theorem}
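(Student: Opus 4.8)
The plan is to prove the two implications separately. The engine for the easy direction is a single observation about projections: if $C\subseteq G^\omega$ is compact then each projection $\pi_m(C)\subseteq G$ is finite (a compact subset of the discrete group $G$), and since $\pi_m$ is a homomorphism, $\pi_m(\langle C\rangle)=\langle\pi_m(C)\rangle$. Hence \emph{every} coordinate projection of a compactly generated subgroup of $G^\omega$ is finitely generated.

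For the ``only if'' direction I argue by contraposition. If some $A\le G$ is not finitely generated, let $H=A\times\{e\}^\omega\le G^\omega$ be the copy of $A$ on the first coordinate. As $G$ is discrete, $H$ carries the discrete subspace topology, so $H$ is a countable discrete space and therefore $K_\sigma$, being a countable union of compact singletons. But $\pi_0(H)=A$ is not finitely generated, so by the observation above $H$ is not compactly generated. (Equivalently, compact subsets of the discrete group $H$ are finite, so any compactly generated subgroup of $H$ is finitely generated, whereas $H\cong A$ is not.) This exhibits a $K_\sigma$ subgroup of $G^\omega$ that is not compactly generated.

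For the ``if'' direction assume every subgroup of $G$ is finitely generated; in particular $G$ itself is finitely generated, and, as an iterated extension of Noetherian groups, every finite power $G^k$ again has the maximal condition on subgroups. Let $H\le G^\omega$ be $K_\sigma$. I first fix a convenient exhaustion $H=\bigcup_nK_n$ by compact sets that are symmetric, contain $e$, increase, and satisfy $K_nK_n\subseteq K_{n+1}$ (so $K_n^{2^j}\subseteq K_{n+j}$); setting $F_m^{(n)}=\pi_m(K_n)$, each $F_m^{(n)}$ is finite and $\pi_m(H)=\bigcup_nF_m^{(n)}$ is a finitely generated subgroup of $G$. Since $\langle C\rangle\subseteq H$ for any $C\subseteq H$, it suffices to produce a single compact $C\subseteq H$ with $K_n\subseteq\langle C\rangle$ for every $n$, that is, so that each element of each $K_n$ is a word in the elements of $C$ of length bounded in terms of $n$.

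Constructing such a $C$ is the heart of the matter, and it must reconcile two different ways a $K_\sigma$ subgroup can fail to be compact. When $H$ is ``thick'', as with Rosendal's group $\mathcal B\le\ZZ^\omega$, a single compact box suffices: $\{-1,0,1\}^\omega\subseteq\mathcal B$ generates $\mathcal B$ since any $x$ with $\sup_m|x(m)|\le M$ is a sum of $M$ box elements. When instead the elements of $H$ have spreading finite support, as with $\bigoplus_\omega\ZZ$, no nontrivial box lies in $H$ and one uses a null sequence such as $\{e\}\cup\{\pm e_k:k\in\omega\}$, whose supports march to infinity so the set is compact. In general I would assemble $C$ as the union of a box part $C_{\mathrm{bx}}\subseteq H$ handling the coordinates where $H$ is unbounded in magnitude and a null part $\{e\}\cup\bigcup_nC_n$ with each compact increment $C_n\subseteq H$ supported on coordinates $\ge n$, handling the directions where support spreads; placing the increments on ever-later coordinates keeps every projection of $C$ finite and keeps $C$ closed, hence compact. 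The finite generation of the projections $\pi_m(H)$ and of the initial-segment images $p_N(H)\le G^N$ is exactly what lets me bound, level by level, the word length needed to reproduce the finitely many generators relevant to $K_n$.

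The main obstacle is this uniformity of word length in the presence of mixing. Unlike the two clean model cases, a general $K_\sigma$ subgroup combines both phenomena, and one cannot split an element of $H$ into head and tail pieces since the pieces need not lie in $H$. The delicate step is therefore to choose the compact increments so that simultaneously (i) they lie in $H$, (ii) they assemble into one compact set, and (iii) they generate every $K_n$ with word length bounded uniformly over the elements of $K_n$. Reconciling (i)--(iii)---using the maximal condition on $G$ to cap the word lengths one level at a time---is where the real work lies.
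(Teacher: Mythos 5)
Your ``only if'' direction is correct and essentially the paper's: the paper uses the diagonal copy $\{\bar a : a \in A\}$ where you use $A \times \{e\}^\omega$, but the mechanism (compact subsets of a countable discrete group are finite, so a compactly generated subgroup isomorphic to $A$ would force $A$ to be finitely generated) is identical.

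The ``if'' direction, however, contains a genuine gap: after reducing the problem to producing a compact $C \subseteq H$ with $K_n \subseteq \langle C \rangle$ for all $n$, you describe two model cases (a box inside $\mathcal B \subseteq \mathbb Z^\omega$, a null sequence inside $\bigoplus_\omega \mathbb Z$), and then state that reconciling your conditions (i)--(iii) ``is where the real work lies''---but that work is never done. No construction of $C$ is given for a general $K_\sigma$ subgroup, and no argument bounds the word lengths you would need. The obstruction you correctly identify (an element of $H$ cannot be split into head and tail pieces, since the pieces need not lie in $H$) is exactly what defeats this direct approach, and the paper circumvents it by two steps missing from your proposal. First, a purely topological reduction valid in any Polish group: every $K_\sigma$ subgroup is compactly generated iff every \emph{countable} subgroup is. (Cover each compact piece $K_n$ by finitely many translates $xU_{n+1}$ with $x \in S_n$ finite, where $(U_n)$ is a shrinking symmetric neighborhood base at $\mathbf 1$; the compact sets $K_n^\ast = \bigcup_{x \in S_n} x^{-1}\bigl((x\,\mathrm{cl}(U_{n+1})) \cap K_n\bigr)$ lie in $U_n$ and accumulate only at $\mathbf 1$, so their union $K^\ast$ is compact and $H = \langle K^\ast \cup \bigcup_n S_n \rangle$; this reduces everything to the countable subgroup generated by $\bigcup_n S_n$.) Second, for a countable subgroup $C = \{x_1, x_2, \ldots\}$ of $G^\omega$, a triangular correction: with $C_n$ the elements of $C$ equal to $\mathbf 1$ on the first $n$ coordinates, finite generation of the coordinate subgroups $\{x(n) : x \in C_n\} \leq G$ gives finite $F_n \subseteq C_n$ such that any $x \in C_n$ lands in $C_{n+1}$ after multiplication by a word in $F_n$; iterating writes each $x_n$ as $\tilde x_n w_n$ with $\tilde x_n \in C_n$ and $w_n$ a word in $F_0 \cup \ldots \cup F_{n-1}$, and the generating set $\bigcup_n (\{\tilde x_n\} \cup F_n)$ is compact because only finitely many of its members are non-identity in any fixed coordinate. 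Note how this dissolves your head/tail problem: one never splits $x$ into pieces; one multiplies $x$ by inverses of elements of the group itself, which stays inside the group, to kill the head. Without these two steps (or a substitute for them), your ``if'' direction remains a description of the difficulty rather than a proof of the theorem.
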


In particular, every $K_\sigma$ subgroup of the countable power of a finite group is compactly generated.  Likewise, in $\mathbb Z^\omega$.  Regarding this result, Arnold Miller \cite{MILLER.countable.groups} has shown in  that every $K_\sigma$ subgroup of $\mathbb R^\omega$ is compactly generated.  This is an interesting complement to the theorem above as $\mathbb R$ is not discrete and, in general, countable subgroups of $\mathbb R$ are not finitely generated.

In Section~\ref{S8}, we apply the method of Theorem~\ref{T1} to demonstrate the existence of complete $F_\sigma$ ideals on $\omega$, with respect to a weak form of Rudin-Keisler reduction.

%%%%%%%%%%%%%%%%%%%%%%%%%%%%%%%%%%%%%%

%%%%%%%%%%%%%%%%%%%%%%%%%%%%%%%%%%%%%%

\section{Preliminaries and notation}\label{S2}

The definitions and notation we use are standard and essentially identical to those in Kechris \cite{KECHRIS.dst} and Kanovei \cite{KAN.borel.equiv}.  We recall some key points below.

A {\it Polish space} is a separable space whose topology is compatible with a complete metric.  A {\it topological group} is a topological space $G$ equipped with a group operation and an inverse map, such that the group operation is continuous as a function $G^2 \rightarrow G$ and the inverse map is continuous as a function $G \rightarrow G$.  Hence a {\it Polish group} is a topological group, the topology of which is Polish.

Except when working with specific groups, we will always use multiplicative notation for group operations.  Henceforth, $\mathbf 1$ will denote the identity element of a multiplicative group.

It is useful to have the notion of a {\it group word}.  An $n$-ary group word $w$ is a function taking $n$ symbols as input and combining these symbols using multiplication and inverses.  For example, $w (a,b,c) = b^{-1}ac^{-1}$ is a ternary group word.  For an $n$-ary group word $w$ and a topological group $G$, note that $w$ induces a continuous function $G^n \rightarrow G$.  When there is no ambiguity, we will sometimes write $w$ for $w(a_1 , \ldots , a_n)$.

For $A \subseteq G$, we let $w [A]$ denote the set 
\[
\{ w (x_1 , \ldots , x_m ) : x_1 , \ldots , x_m \in A\}
\]
We let $\langle A \rangle$ denote the {\it subgroup generated by A}, i.e., the smallest (with respect to containment) subgroup of $G$ which contains $A$.  Equivalently, 
\[
\langle A \rangle = \bigcup \{ w [A] : w \mbox{ is a group word}\}.
\]

For subsets $A,B$ of a group $G$ and $g \in G$, we let $AB$ denote the set $\{ ab : a \in A \ \& \ b \in B\}$, $gA$ denote $\{ ga : a \in A\}$ and $A^{-1}$ denote $\{ a^{-1} : a \in A\}$.  Likewise, define $A+B$ and $-A$, in the case of additive groups.

If $x$ is any sequence, we let $x(n)$ denote the $n$th term (or {\it bit}) of $x$.  We denote the length $n$ initial segment of $x$ by $x \upto n$.  If $I \subset \omega$ is the interval $\{ k,k+1 , \ldots , k+m\}$, then $x \upto I$ denotes the finite sequence 
\[
(x(k) , x(k+1) , \ldots , x(k+m)).
\]
For a set $A$ of sequences, we let $A \upto n$ denote the set $\{ x \upto n : x \in A\}$.

For finite sequences $s,t$, $s \concat t$ denotes the concatenation of $s$ and $t$.  If $t$ is the length 1 sequence $(a)$, for some $a \in X$, we simply write $s \concat a$, for $s \concat t$.

If $X$ is any set and $a \in X$, $a^n$ denotes the finite sequence $(a , \ldots ,a ) \in X^n$ and $\bar a$ the infinite sequence $(a,a, \ldots ) \in X^\omega$.

If $T \subseteq X^{<\omega}$ is a tree, then $[T]$ denotes the set $\{ x \in X^\omega : (\forall n) (x \upto n \in T)\}$.  If $T \subseteq (X \times Y)^{< \omega}$ is a tree, then $p[T]$ denotes the set $\{ x \in X^\omega : (\exists y \in Y^\omega) ((x,y) \in [T] )\}$.

For $\alpha, \beta \in \omega^\omega$, we write $\alpha \leq \beta$ to mean that $(\forall i) (\alpha (i) \leq \beta (i))$.  Similarly, if $s,t \in \omega^k$, $s \leq t$ means that $s(i) \leq t(i)$, for each $i < k$.

Finally, if $A$ is a subset of a topological space $X$, $\cl ( A)$ denotes the (topological) closure of $A$.

%%%%%%%%%%%%%%%%%%%%%%%%%%%%%%%%%%%%%%

%%%%%%%%%%%%%%%%%%%%%%%%%%%%%%%%%%%%%%

\section{A universal closed subgroup of $\mathbb Z^\omega$}\label{S3}

The following is our simplest result.  Although it does not fit into the scheme outlined in Section~\ref{summary}, it provides an example of the type of ``coding'' we will use to produce universal subgroups.

\begin{theorem}\label{closed}
There is a universal closed subgroup of $\mathbb Z^\omega$.
\end{theorem}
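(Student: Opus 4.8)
The plan is to show that every closed subgroup of $\ZZ^\omega$ is cut out by a countable family of congruence constraints, and then to build a single closed subgroup $H$ that simultaneously encodes every possible constraint in its own block of coordinates.

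First I would establish a description of closed subgroups. For a closed subgroup $K \subseteq \ZZ^\omega$ and $n \in \omega$, let $K_n = \{ x \upto n : x \in K\} \subseteq \ZZ^n$. Since $K$ is closed, $x \in K$ holds iff $x \upto n \in K_n$ for every $n$. Each $K_n$ is a subgroup of $\ZZ^n$, so by the stacked basis theorem (Smith normal form) there is an integral basis $f_1, \ldots, f_n$ of $\ZZ^n$ and integers $d_1, \ldots, d_n \geq 0$ with $K_n = \bigoplus_i d_i \ZZ f_i$; writing $f_i^*$ for the dual basis, membership $v \in K_n$ is equivalent to the finitely many conditions $d_i \mid f_i^*(v)$, where I read $d_i = 0$ as the exact equality $f_i^*(v) = 0$ and discard the vacuous conditions coming from $d_i = 1$. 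Each $f_i^*$ extends to a finite-support integer-linear functional on $\ZZ^\omega$, that is, a continuous homomorphism $\ZZ^\omega \to \ZZ$ depending only on the first $n$ coordinates. Collecting these over all $n$ and enumerating them, I obtain a sequence $(\ell_k, m_k)_{k \in \omega}$, with each $\ell_k \colon \ZZ^\omega \to \ZZ$ a continuous homomorphism and $m_k \in \{0, 2, 3, 4, \ldots\}$, such that
\[
K = \{ x \in \ZZ^\omega : (\forall k)\ m_k \mid \ell_k(x) \},
\]
again reading $m_k = 0$ as $\ell_k(x) = 0$.

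Next I would define the universal subgroup. Identifying $\ZZ^\omega$ with $\ZZ^{\omega \times \omega}$ via a fixed bijection $\omega \times \omega \to \omega$, set
\[
H = \{ y \in \ZZ^{\omega \times \omega} : (\forall m)(\forall j)\ m \mid y(m,j) \},
\]
with the convention that $m = 0$ demands $y(0,j) = 0$. Since each coordinate condition defines a subgroup of $\ZZ$ that is clopen (for $m \geq 1$) or closed (for $m = 0$), $H$ is a closed subgroup of $\ZZ^{\omega \times \omega} \cong \ZZ^\omega$, so $H$ itself belongs to the class of closed subgroups.

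Finally I would produce the reduction $K \leqg H$ for an arbitrary closed $K$ from its constraint sequence $(\ell_k, m_k)$. For each modulus $m$ there are at most countably many indices $k$ with $m_k = m$, so I can fix an injection $k \mapsto (m_k, j_k)$ assigning each constraint a coordinate of $H$ of matching modulus, with distinct constraints receiving distinct coordinates. Define $\varphi \colon \ZZ^\omega \to \ZZ^{\omega \times \omega}$ by $\varphi(x)(m_k, j_k) = \ell_k(x)$ for each $k$ and $\varphi(x)(m,j) = 0$ for every coordinate not of this form. Each output coordinate is either $0$ or some $\ell_k(x)$, hence a finite integer combination of the coordinates of $x$, so $\varphi$ is a continuous homomorphism. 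By construction $\varphi(x) \in H$ holds iff $m_k \mid \ell_k(x)$ for all $k$ (the unassigned coordinates are $0$ and impose nothing), which is exactly $x \in K$; thus $\varphi^{-1}(H) = K$. The main obstacle is the first step: closed subgroups of $\ZZ^\omega$ need not be pure, so one cannot capture them by linear equations $\ell(x) = 0$ alone, and it is precisely the Smith normal form that supplies the correct moduli $m_k$ and lets a single group $H$ absorb all of them through its grid of coordinates indexed by modulus. I note that only the easy direction of the duality for $\ZZ^\omega$ is used, namely that finite-support integer functionals are continuous; the remaining steps are routine bookkeeping.
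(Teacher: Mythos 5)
Your proof is correct, but it takes a genuinely different route from the paper's. Both arguments share the same skeleton: a closed subgroup $K \subseteq \ZZ^\omega$ is determined by its finite projections $K_n = \{x \upto n : x \in K\}$, and the universal object is built by coding countable data into separate coordinates. The difference is in what data gets coded. The paper simply enumerates \emph{all} subgroups $G^k_0, G^k_1, \ldots$ of $\ZZ^k$ (countably many, since each is finitely generated), partitions $\omega$ into length-$k$ blocks $I^k_n$, declares $x \in G$ iff $x \upto I^k_n \in G^k_n$ for all $k,n$, and reduces $K$ by copying $x \upto k$ into the one block whose label matches $K_k$ and writing $0^k$ elsewhere; no structure theory is needed beyond the countability of the subgroups of $\ZZ^k$. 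You instead invoke the stacked basis theorem to convert each projection $K_n$ into finitely many divisibility constraints $m \mid \ell(x)$ on finite-support functionals, and your universal group is the very clean ``rectangular'' subgroup $\prod_{(m,j)} m\ZZ$ (with $0\ZZ = \{0\}$), into which the reduction places the values $\ell_k(x)$ at matching-modulus coordinates. Your route buys a structurally sharper conclusion --- every closed subgroup of $\ZZ^\omega$ is a continuous homomorphic preimage of a product of subgroups of $\ZZ$, one condition per coordinate --- at the cost of relying on the structure theory of free abelian groups; the paper's route is more elementary and transfers verbatim to $G^\omega$ for any finite group $G$ (its stated corollary), where a Smith-normal-form decomposition is unavailable. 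Your bookkeeping is sound: the grid $H$ is closed since each coordinate condition is clopen, the constraint-to-coordinate assignment can be made injective since each modulus class is countable, and unassigned coordinates carry the value $0$, which satisfies every congruence.
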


\begin{proof}
$\mathbb Z^k$ is a free, finitely-generated, Abelian group.  Hence all of its subgroup are also finitely generated (see Lang \cite[7.3]{LANG.algebra}).  In particular, there are only countably many subgroups of $\mathbb Z^k$.  Enumerate them as $G^k_0, G^k_1, \ldots$.  For each $n,k$, let $I^k_n \subseteq \omega$ be an interval of length $k$, such that that $\{ I^k_n : n,k \in \omega \}$ partition $\omega$.  

Define a closed subgroup $G$ of $\mathbb Z^\omega$ by  
\[ 
x \in G \iff (\forall k,n) (x \upto I^k_n \in G^k_n).
\]
We will show that $G$ is a universal closed subgroup.  Let $H$ be an arbitrary closed subgroup of $\mathbb Z^\omega$.  We show that $H \leqg G$.

Let $T$ be a pruned tree on $\mathbb Z$ such that $H = [T]$.  Note that, because $T$ is pruned, $T \cap \mathbb Z^k$ is a subgroup of $\mathbb Z^k$, for each $k$.  Given $k$, let $n_k$ be such that $T \cap \mathbb Z^k = G^k_{n_k}$.

Define a continuous group homomorphism $\varphi : \mathbb Z^\omega \rightarrow \mathbb Z^\omega$ by
\[ 
\varphi(x) \upto I^k_n = 
\begin{cases}
x\upto k \quad &\mbox{if} \quad n = n_k,\\
0^k  &\mbox{otherwise.}
\end{cases}
\]
For $x \in \mathbb Z^\omega$ and $y = \varphi (x)$, we have 
\begin{align*}
x \in H &\iff (\forall k) (x\upto k \in T \cap \mathbb Z^k) \\
&\iff (\forall k) (y \upto I^k_{n_k} \in G^k_{n_k}) \\
&\iff (\forall k,n) (y \upto I^k_n \in G^k_n)\\
&\iff \varphi (x) \in G.
\end{align*} 
The third `$\iff$' follows from the fact that, if $n \neq n_k$, then $y \upto I^k_n = 0^k \in G^k_n$.  This shows that $H \leqg G$.
\end{proof}

If $G$ is a finite group, then there are only finitely many subgroups of $G^k$, for each $k$.  Thus we have the following corollary to the proof of Theorem~\ref{closed}.

\begin{corollary}
If $G$ s a finite group, then $G^\omega$ has a universal closed subgroup.
\end{corollary}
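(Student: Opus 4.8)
The plan is to adapt the construction in the proof of Theorem~\ref{closed} to the finite-group setting, where the crucial simplification is that each power $G^k$ has only finitely many subgroups. First I would observe that since $G$ is finite, every $G^k$ is finite, so its (finitely many) subgroups can be enumerated as $G^k_0, \ldots, G^k_{m_k}$ for some $m_k \in \omega$. As before, I would fix a partition $\{I^k_n : n \leq m_k,\ k \in \omega\}$ of $\omega$ into consecutive intervals, where $I^k_n$ has length $k$, and define a closed subgroup $U \subseteq G^\omega$ by
\[
x \in U \iff (\forall k)(\forall n \leq m_k)\, (x \upto I^k_n \in G^k_n).
\]
The point of using only finitely many intervals per length $k$ is that this still partitions $\omega$, so $U$ is well defined and genuinely a closed subgroup of $G^\omega$ (it is a product-type condition cut out by finitely many clopen constraints on each coordinate block).

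The verification that $U$ is universal proceeds exactly as in Theorem~\ref{closed}. Given an arbitrary closed subgroup $H \subseteq G^\omega$, I would write $H = [T]$ for a pruned tree $T$ on $G$; since $T$ is pruned, $T \cap G^k$ is a subgroup of $G^k$ for each $k$, so there is an index $n_k \leq m_k$ with $T \cap G^k = G^k_{n_k}$. I would then define $\varphi : G^\omega \to G^\omega$ by copying $x \upto k$ into the block $I^k_{n_k}$ and filling every other block $I^k_n$ (for $n \neq n_k$) with the identity string $\mathbf 1^k$. This $\varphi$ is a continuous homomorphism, and the same chain of equivalences as before shows $x \in H \iff \varphi(x) \in U$, the key being that whenever $n \neq n_k$ the block $\varphi(x)\upto I^k_n = \mathbf 1^k$ automatically lies in the subgroup $G^k_n$.

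The only genuinely new wrinkle relative to the original proof is that $G$ need not be abelian, so I would write the construction multiplicatively and use $\mathbf 1^k$ for the identity block rather than $0^k$. This is purely cosmetic: the argument never uses commutativity, only that $T \cap G^k$ is a subgroup and that the identity belongs to every subgroup. I do not expect any real obstacle here — the finiteness of $G$ removes the appeal to the structure theorem for finitely generated abelian groups (Lang~\cite[7.3]{LANG.algebra}) that was needed in Theorem~\ref{closed} to guarantee countably many subgroups, replacing it with the trivial observation that a finite group has finitely many subgroups. Hence the statement follows directly as a corollary to the proof, as claimed.
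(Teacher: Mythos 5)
Your proof is correct and is essentially the paper's own argument: the corollary is stated there as a corollary to the \emph{proof} of Theorem~\ref{closed}, the only change being that the appeal to finitely generated abelian group theory (used to get countably many subgroups of $\mathbb Z^k$) is replaced by the observation that a finite group $G$ has only finitely many subgroups of each power $G^k$, after which the block-coding construction and the chain of equivalences go through verbatim. Your multiplicative notation and the finite index set $n \leq m_k$ are exactly the cosmetic adjustments the paper has in mind.
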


%%%%%%%%%%%%%%%%%%%%%%%%%%%%%%%%%%%%%%

%%%%%%%%%%%%%%%%%%%%%%%%%%%%%%%%%%%%%%

\section{$K_\sigma$ subgroups}\label{S4}

In Section~\ref{S4.1} we study the relationship between $K_\sigma$ and compactly generated subgroups and prove Theorem~\ref{T4}.  In Section~\ref{S4.2}, we produce universal $K_\sigma$ and compactly generated subgroups in the direct product of any sequence of locally compact Polish groups, with infinitely often repeated factors, thereby proving Theorem~\ref{T1}.

\subsection{$K_\sigma$ vs.~compactly generated subgroups}\label{S4.1}

A compactly generated subgroup will always be $K_\sigma$.  Examples of such subgroups in $\mathbb Z^\omega$ are 
\[
\mathcal B = \{ x : x \mbox{ is bounded}\}
\]
(generated by the set of all 0-1 sequences) and 
\[
\mathrm{Fin} = \{ x : (\forall^\infty n) (x(n) = 0) \}
\]
(generated by the set of 0-1 sequences with at most one nonzero bit).  

In some cases, the classes of $K_\sigma$ and compactly generated subgroups coincide.  The following two theorems give a sufficient condition for this to be the case.  In particular, they imply that every $K_\sigma$ subgroup of $\mathbb Z^\omega$ is compactly generated.

\begin{theorem}
For a Polish group $G$, every $K_\sigma$ subgroup of $G$ is compactly generated if, and only if, every countable subgroup of $G$ is compactly generated.\footnote{For countable subgroups, note that compactly generated is not the same as finitely generated, e.g., $\mathbb Q \subseteq \mathbb R$ is generated by $\{ {1\over n } : n \in \omega \} \cup \{ 0 \}$, but is not finitely generated.}
\end{theorem}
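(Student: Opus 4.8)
The forward direction is immediate. A countable subgroup $C\subseteq G$ is the union of its singletons, each of which is compact, so $C$ is a $K_\sigma$ subgroup; hence if every $K_\sigma$ subgroup of $G$ is compactly generated, then in particular $C$ is.

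For the converse I would argue by contraposition: assuming some $K_\sigma$ subgroup $H$ is \emph{not} compactly generated, I would produce a countable subgroup of $G$ that is not compactly generated. First set up the filtration. Write $H=\bigcup_n K_n$ with each $K_n$ compact; replacing $K_n$ by $K_0\cup\cdots\cup K_n$ we may assume the $K_n$ increase, so that $C_n:=\langle K_n\rangle$ is a compactly generated subgroup and $C_0\subseteq C_1\subseteq\cdots$ with $\bigcup_n C_n=H$. If some $C_n$ equalled $H$, then $H=\langle K_n\rangle$ would be compactly generated, contrary to assumption; hence each $C_n$ is a proper subgroup, and after passing to a subsequence I may assume $C_0\subsetneq C_1\subsetneq\cdots$.

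The plan is then to pick $x_n\in C_{n+1}\setminus C_n$ and set $C=\langle x_n:n\in\omega\rangle$. Automatically $\langle x_0,\dots,x_n\rangle\subseteq C_{n+1}$, while $x_{n+1}\in C_{n+2}\setminus C_{n+1}$, so the subgroups $\langle x_0,\dots,x_n\rangle$ form a strictly increasing filtration of $C$. The usefulness of this filtration is that any compact $L\subseteq C$ which happens to lie in some $\langle x_0,\dots,x_n\rangle$ generates a proper subgroup of $C$, since it omits $x_{n+1}\in C$. Thus, if I can guarantee that \emph{every} compact subset of $C$ is swallowed by the filtration, then no compact subset of $C$ generates $C$, and $C$ is the desired countable non–compactly generated subgroup, finishing the contraposition.

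The main obstacle is precisely this last guarantee. A finite subset of $C$ involves only finitely many generators and so automatically lies in some $\langle x_0,\dots,x_n\rangle$; but an \emph{infinite} compact subset need not, as the example $\{1/k:k\ge 1\}\cup\{0\}\subseteq\mathbb Q$ shows—indeed that convergent sequence is exactly what makes $\mathbb Q$ compactly generated. I would therefore devote the core of the argument to choosing the $x_n$ so that $C$ is discrete in $G$, since a compact subset of a discrete set is finite and hence bounded in the filtration. Arranging discreteness means preventing any word in the $x_n$ from accumulating, which I expect to require exploiting the failure of compact generation of $H$ to drive the representatives $x_n$ out to ``infinity'' in $G$. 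Verifying that this escape can always be engineered—and, dually, that when no such escape is available (e.g. in the relatively compact case) $H$ must in fact have been compactly generated—is where I anticipate the real work of the proof to lie.
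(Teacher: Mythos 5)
Your forward direction is correct, and the skeleton of your contrapositive (the strictly increasing filtration $C_n = \langle K_n\rangle$, representatives $x_n \in C_{n+1}\setminus C_n$, and the observation that a compact set trapped in some $\langle x_0,\ldots,x_n\rangle$ cannot generate $C$) is sound as far as it goes. But the step you defer---arranging that \emph{every} compact subset of $C$ lies in some $\langle x_0,\ldots,x_n\rangle$, which you propose to achieve by making $C$ discrete---is the entire content of the theorem, and the discreteness route provably cannot work in general. A discrete subgroup of a Hausdorff topological group is closed; hence if $G$ is compact, or more generally if $H$ (and therefore every candidate representative $x_n$) lies inside a compact subgroup of $G$, then every discrete subgroup available to your construction is finite, and no choice of the $x_n$ can make the infinite group $C$ discrete. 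In exactly this ``relatively compact'' case your fallback is to show directly that $H$ must have been compactly generated after all---but that is precisely the theorem you are trying to prove, restricted to its hardest case, so the plan is circular where the real work lies.

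The paper's proof runs in the opposite direction: it is a direct argument that tames the compact pieces of $H$ rather than trying to tame the countable witness. Writing $H = \bigcup_n K_n$ and fixing a neighborhood base $U_0 \supseteq U_1 \supseteq \cdots$ at $\mathbf 1$ with $\cl (U_{n+1}) \subseteq U_n$, one covers each $K_n$ by finitely many translates $\{ x U_{n+1} : x \in S_n \}$ with $S_n \subseteq K_n$ finite, and replaces $K_n$ by $K^\ast_n = \bigcup_{x \in S_n} x^{-1}\bigl( (x \cl (U_{n+1})) \cap K_n \bigr)$, a compact subset of $H$ contained in $U_n$ with $K_n \subseteq \langle K^\ast_n \cup S_n \rangle$. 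Because the $K^\ast_n$ shrink into the $U_n$, the union $K^\ast = \bigcup_n K^\ast_n$ is itself compact, and $H = \langle K^\ast \cup \bigcup_n S_n \rangle$. The hypothesis is then applied to the countable subgroup $S = \langle \bigcup_n S_n \rangle$: if $C \subseteq S$ is a compact generating set for $S$, then $K^\ast \cup C$ is a compact generating set for $H$. Read contrapositively, this construction also produces the countable non--compactly-generated witness you wanted, namely $S$; but $S$ is in general dense rather than discrete. The idea you are missing is to translate the compact pieces back to the identity so that their union stays compact, thereby pushing all of the difficulty into the countable set of translation centers, instead of trying to push the witness subgroup out to infinity.
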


\begin{proof}
The ``only if'' part follows from the fact that every countable subgroup is $K_\sigma$.

For the ``if'' part, suppose that $H = \bigcup_n K_n$ is a $K_\sigma$ subgroup of $G$.  Let $U_0 \supseteq U_1 \supseteq \ldots $ be a neighborhood base at the identity element $\mathbf 1 \in G$, with the additional property that each $\cl (U_{n+1}) \subseteq U_n$.  For each $n$
\[
\{ xU_{n+1} : x\in K_n \}
\]
covers $K_n$.  By compactness, there exists a finite set $S_n \subseteq K_n$ such that \[\{ x U_{n+1} : x \in S_n \}\] still covers $K_n$.  Now let 
\[
K^\ast_n = \bigcup_{x\in S_n} x^{-1}((x \cl (U_{n+1})) \cap K_n).
\]
First note that, as the finite union of translates of compact sets, $K^\ast_n$ is compact.  Also, $K^\ast_n \subseteq H$ and $\mathbf 1 \in K^\ast_n \subseteq \bar U_{n+1} \subseteq U_n$.  Furthermore, $K_n \subseteq \langle K^\ast_n \cup S_n\rangle$.  Let $K^\ast = \bigcup_n K^\ast_n$.  Then $K^\ast \subseteq H$ and 
\[ 
H = \langle K^\ast \cup \bigcup_n S_n\rangle.
\]

We claim that $K^\ast$ is compact.  Indeed, suppose that $z_0 , z_1 , \ldots \in K^\ast$.  If there is $n$ such that $z_j \in K^\ast_n$, for infinitely many $j$, then $(z_j)_{j \in \omega}$ has a subsequential limit in $K^\ast_n$, by compactness.  On the other hand, suppose that there are only finitely many $z_j$ in each $K^\ast_n$.  Let $n_0 < n_1 < \ldots$ and $j_0 , j_1 , \ldots$ be such that for each $k$, $z_{j_k} \in K^\ast_{n_k}$.  Then for each $k$, $z_{j_k} \in U_{n_k}$.  Hence $z_{j_k} \rightarrow \mathbf 1 \in K^\ast$, as $k \rightarrow \infty$.

Let $S \subseteq H$ be the subgroup generated by $\bigcup_n S_n$ (a countable subgroup).  By assumption, $S$ is compactly generated.  Therefore, take a compact set $C \subseteq S$ with $S = \langle C \rangle$.  Then $H$ will be generated by the compact set $K^\ast \cup C$.
\end{proof}

We now restate and prove Theorem~\ref{T4}.

\begin{refthmT4}
Suppose that $G$ is countable discrete group.  Every $K_\sigma$ subgroup of $G^\omega$ is compactly generated if, and only if, every subgroup of $G$ is finitely generated.
\end{refthmT4}

\begin{proof}
First suppose that there is a subgroup $H$ of $G$ which is not finitely generated.  Then $H^* = \{ \bar a : a \in H\}$ is a $K_\sigma$ subgroup of $G^\omega$ with no compact generating set.

Suppose now that every subgroup of $G$ is finitely generated.  We will show that every $K_\sigma$ subgroup of $G^\omega$ is compactly generated.  By the previous theorem, it suffices to show that every countable subgroup of $G^\omega$ is compactly generated.  Fix a countable subgroup $C = \{ x_1 , x_2 , \ldots \}$.  For each $n$, let $C_n = \{ x \in C : x \upto n = \mathbf 1^n\}$.  

\begin{claim1}
For each $n$, there is a finite set $F_n \subseteq C_n$ such that if $x \in C_n$, then there exists a group word $w$ in the elements of $F_n$ such that $x \cdot w^{-1} \in C_{n+1}$.
\end{claim1}

\proofclaim  For each $C_n$ there is a finite set $F_n \subseteq C_n$ such that $\{ x(n) : x \in F_n \}$ generates $\{ x(n) : x \in C_n\}$, since the latter is a subgroup of $G$.  

This implies that, for each $x \in C_n$ there is a group word $w$ in the elements of $F_n$ such that $x(n) = w (n)$.  Hence $x(n) \cdot w^{-1} (n) = \mathbf 1$.  On the other hand, $x \upto n = w \upto n = \mathbf 1^n$, since $x , w \in C_n$.  Thus 
\[
x\cdot w^{-1} \upto (n+1) = \mathbf 1^{n+1}.
\]
In other words, $x \cdot w^{-1} \in C_{n+1}$.  This proves the claim.

\begin{claim2}
For each $n$ there exists $\tilde x_n \in C_n$ and a group word $w_n$ in the elements of $F_0 \cup \ldots \cup F_{n-1}$ such that $x_n = \tilde x_n \cdot w_n$.  
\end{claim2}

\proofclaim  The argument is a finite induction.  Let $w_{n,0}$ be a group word in the elements of $F_0$, as in Claim 1, such that $x_n \cdot w_{n,0}^{-1} \in C_1$.  Set $x_{n,1} = x_n \cdot w_{n,0}^{-1}$.  Now let $w_{n,1}$ be a group word in the elements of $F_1$ such that $x_{n,1} \cdot w_{n,1}^{-1} \in C_2$ and define $x_{n,2} = x_{n,1} \cdot w_{n,1}^{-1}$.  In general, we obtain $x_{n,i} \in C_i$ and group words $w_{n,i}$ in the elements of $F_i$ such $x_{n,i+1} = x_{n,i} \cdot w_{n,i}^{-1} \in C_{i+1}$.

Let $\tilde x_n = x_{n,n}$ and $w_n = w_{n,n-1} \cdot \ldots \cdot w_{n,0}$.  Observe that $w_n$ is a group word in the elements of $F_0 \cup \ldots \cup F_{n-1}$, $\tilde x_n \in C_n$ and $x_n = \tilde x_n \cdot w_n$, as desired.

\medskip

Claim 2 implies that each $x_n$ is in the subgroup generated by $\tilde x_n$ together with $F_0 \cup \ldots \cup F_{n-1}$.  Thus the set 
\[
\tilde C = \bigcup_n (\{ \tilde x_n \} \cup F_n)
\]
generates $C$. 

It remains to check that $\tilde C$ is compact.  For each $n$, observe that there are only finitely many elements $x \in \tilde C$ such that $x(n) \neq \mathbf 1$, since all such elements are contained in $\{\tilde x_i : i \leq n \} \cup F_0 \cup \ldots \cup F_n$.  Thus every infinite sequence of distinct elements of $\tilde C$ must converge to $\bar{\mathbf 1}$.  This implies that every infinite sequence in $\tilde C$ is either eventually constant or has a subsequence converging to $\bar{\mathbf 1}$.
\end{proof}

We enumerate a couple of direct consequences.
\begin{enumerate}
\item Every $K_\sigma$ subgroup of $\mathbb Z^\omega$ is compactly generated.  (Since every subgroup of $\mathbb Z$ is singly generated.)
\item If $G$ is a finite group, then every $K_\sigma$ subgroup of $G^\omega$ is compactly generated.
\end{enumerate}

For a Polish group $G$, even if there are non-compactly generated $K_\sigma$ subgroups, we can still ask whether or not every $K_\sigma$ subgroup is group-homomorphism reducible to a compactly generated one.  The following two examples illustrate the range of possibilities.

\begin{example} Let $S = \bigoplus_\omega \mathbb Z$ be the direct sum of countably many copies of $\mathbb Z$.  Unlike $\mathbb Z$, the countable group $S$ is not finitely generated.  Thus, with the discrete topology, $S$ is $K_\sigma$, but not compactly generated.  (In a discrete space, compact is the same as finite.)

By extension, not all $K_\sigma$ subgroups of $S^\omega$ will be compactly generated.  For example, $\{ x \in S^\omega : x \mbox{ is a constant sequence}\}$.  On the other hand, we will see that every $K_\sigma$ subgroup is group-homomorphism reducible to a compactly generated one.  We begin by showing that $S^\omega$ may be mapped one-to-one homomorphically into $\mathbb Z^\omega$.  Let $\varphi_n : S \rightarrow \mathbb Z$ be the projection map onto the $n$th coordinate.  Define $\psi : S^\omega \rightarrow \mathbb Z^\omega$ by 
\[
\psi (x)(\langle m,n \rangle) = \varphi_n(x(m)),
\]
where $\langle \cdot , \cdot \rangle: \omega^2 \longleftrightarrow \omega$ is a fixed bijection.  The map $\psi$ is a continuous injective homomorphism whose range is the $\mathbf \Pi^0_3$ subgroup
\[
\{ y \in \mathbb Z^\omega : (\forall m) (\forall^\infty n) ((x (\langle m,n \rangle) = 0)\}.
\]

Now let $H \subseteq S^\omega$ be any $K_\sigma$ subgroup.  The image $\psi(H) \subseteq \mathbb Z^\omega$ is also $K_\sigma$ (because $\psi$ is continuous) hence compactly generated by Theorem~\ref{T4}.  Say $\psi (H) = \langle K \rangle$.  Let $i : \mathbb Z^\omega \rightarrow S^\omega$ be the natural ``inclusion'' map.  Then $i (K) \subseteq S^\omega$ is compact and $H = (i\circ \psi)^{-1} (\langle i(K) \rangle)$, because $i \circ \psi$ is injective.  
\end{example}

For our next example, we introduce some terminology.  Suppose that $H$ is a subgroup of an Abelian group $G$ (with additive notation) and $x \in H$.  We say that $x$ is \emph{divisible in $H$} to mean that for each $n \in \omega$, there exists $y \in H$ such that $x = ny$.  Note that for subgroups $H_1, H_2 \subseteq G$, if $\varphi : G \rightarrow G$ is a group homomorphism such that $\varphi^{-1} (H_2) = H_1$ and $x \in H_1$ is divisible in $H_1$, then $\varphi (x) \in H_2$ is divisible in $H_2$.

\begin{example} Consider the group $\mathbb Q$ of rational numbers with the discrete topology.  We will see that there are $K_\sigma$ subgroups of $\mathbb Q^\omega$ that are not group-homomorphism reducible to any compactly generated subgroup.

We first claim that there are no nonzero divisible elements in a compactly generated subgroup of $\mathbb Q^\omega$.  Indeed, suppose that, on the contrary, $H$ is generated by the compact set $K$ and there is a nonzero element $x \in H$, with $x$ divisible in $H$.  Let $m \in \omega$ be such that $x(m) \neq 0$.  Let 
\[ 
A = \{ y(m) : y \in K\}.
\]
Note that, since $x$ is divisible in $H$, $x(m)$ will be divisible in $\langle A \rangle \subset \mathbb Q$.  As $K$ is compact and we have given $\mathbb Q$ the discrete topology, $A$ must be finite.  Therefore, let $k \in \mathbb Z$ be such that $ka \in \mathbb Z$, for each $a \in A$.  This implies that, for any $b \in \langle A \rangle$, we also have $kb \in \mathbb Z$.  Let $n$ be large enough that ${k \over n}x(m) \notin \mathbb Z$.  Thus ${1\over n}x(m) \notin \langle A \rangle$, contradicting the divisibility of $x(m)$ in $\langle A \rangle$.

We now exhibit a $K_\sigma$ subgroup which is not group-homomorphism reducible to any compactly generated subgroup.  Consider the subgroup 
\[
\mathrm{Fin} = \{ x \in \mathbb Q^\omega : (\forall^\infty n) (x(n) = 0)\}.
\]
$\mathrm{Fin}$ is $K_\sigma$ and every element of $\mathrm{Fin}$ is divisible in $\mathrm{Fin}$.  Suppose that $\varphi : \mathbb Q^\omega \rightarrow \mathbb Q^\omega$ is a continuous homomorphism and $H$ is a subgroup such $\varphi^{-1}(H) = \mathrm{Fin}$.  In the first place, we have that $\ker (\varphi) \subseteq \mathrm{Fin}$.  Note, however, that $\ker(\varphi) \neq \mathrm{Fin}$, since then we would have $\varphi \equiv 0$ because $\mathrm{Fin}$ is dense in $\mathbb Q^\omega$.  Hence there exists $x \in \mathrm{Fin}$ with $\varphi(x) \neq 0$.  Since $x$ is divisible in $\mathrm{Fin}$, we have that $\varphi (x)$ is divisible in $H$ and nonzero.  Thus $H$ cannot be compactly generated, by the comments above.
\end{example}

\subsection{Universal subgroups}\label{S4.2}

Before giving the proof of Theorem~\ref{T1}, we consider the special case of $\ZZ^\omega$ to illustrate the main ideas in a more straightforward setting.

\subsubsection{The case of $\mathbb Z^\omega$}

\begin{theorem}\label{ucg}
There is a universal compactly generated subgroup of $\mathbb Z^\omega$.
\end{theorem}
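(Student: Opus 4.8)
The plan is to imitate the block-coding of Theorem~\ref{closed}, but to code \emph{finite projections of a compact generating set} into \emph{tiles} and to take as the universal subgroup the group generated by a single compact \emph{product} of these tiles. The decisive points are that using a product (rather than a disjoint-support union) of tiles keeps the generated group $K_\sigma$ while still permitting elements of infinite support, and that membership in a \emph{compactly} generated group forces a uniform bound on the number of generators used — exactly the feature that will let me recover a genuine element of the target's generating set by a compactness argument.

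First I would record the normal form. A set $K\subseteq\ZZ^\omega$ is compact iff it is closed and each projection $K\upto k$ is a finite subset of $\ZZ^k$, and every compactly generated subgroup has the form $H=\langle K\rangle$ for such a $K$, which I may assume contains $0$. Next, enumerate all pairs $(k,A)$ with $A\subseteq\ZZ^k$ finite and $0\in A$, and fix a partition of $\omega$ into intervals $B_{(k,A)}$ with $\len{B_{(k,A)}}=k$. Define the compact set
\[
C=\{\,c\in\ZZ^\omega : (\forall k,A)\ c\upto B_{(k,A)}\in A\,\},
\]
a product of finite sets, hence compact by Tychonoff, and put $G=\langle C\rangle$; by construction $G$ is compactly generated.

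Given a target $H=\langle K\rangle$, set $A_k=K\upto k$ and define $\varphi:\ZZ^\omega\to\ZZ^\omega$ by $\varphi(x)\upto B_{(k,A_k)}=x\upto k$ and $\varphi(x)\upto B_{(k',A)}=0$ whenever $A\neq A_{k'}$. Each output coordinate depends on finitely many input coordinates, so $\varphi$ is a continuous homomorphism, and I claim $\varphi^{-1}(G)=H$. The forward inclusion is routine: writing $x=\sum_{t\le N}n_t\kappa^{(t)}$ with $\kappa^{(t)}\in K$, the vectors $c^{(t)}\in C$ defined by $c^{(t)}\upto B_{(k,A_k)}=\kappa^{(t)}\upto k$ (and $0$ on unused blocks) satisfy $\varphi(x)=\sum_{t\le N}n_t c^{(t)}\in G$.

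The main obstacle is the reverse inclusion, where the uniform-bound phenomenon is essential. Suppose $\varphi(x)=\sum_{t\le N}n_t c^{(t)}$ with $c^{(t)}\in C$. Reading this off the block $B_{(k,A_k)}$ yields, for \emph{every} $k$ and with a \emph{single} $N$ and fixed coefficients $n_t$, a witness $x\upto k=\sum_{t\le N}n_t a^{(t)}_k$ where $a^{(t)}_k=c^{(t)}\upto B_{(k,A_k)}\in K\upto k$. I would then form the tree of all $N$-tuples $(s_1,\dots,s_N)$, $s_t\in K\upto m$, satisfying $\sum_{t\le N}n_t s_t=x\upto m$, ordered by coordinatewise restriction. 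This tree is finitely branching, since each $K\upto m$ is finite, and has a node at every level $m$ by the witnesses, so König's lemma yields an infinite branch; since $K$ is closed, its components are elements $\kappa^{(t)}\in K$ with $x=\sum_{t\le N}n_t\kappa^{(t)}$, i.e.\ $x\in H$. Hence $H\leqg G$, and as $H$ was arbitrary, $G$ is universal. The delicate point is precisely that the single combination defining membership in $\langle C\rangle$ supplies a \emph{fixed arity} $N$ across all levels; had I used the full product of the $\langle A\rangle$ in place of $\langle C\rangle$, the arity could grow with the level, König's lemma would not apply, and the resulting group would in any case fail to be $K_\sigma$.
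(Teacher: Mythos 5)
Your proof is correct and is essentially the paper's own argument: the same block-coding of all finite subsets of $\mathbb Z^k$ containing $0^k$ into a single compact product set, the same homomorphism copying $x \upto k$ onto the block indexed by $K \upto k$ and $0^k$ elsewhere, and the same fixed-arity compactness argument for the reverse inclusion. The only differences are cosmetic: the paper normalizes $-K=K$ and works with plain sums of generators, extracting limits via convergent subsequences, where you allow integer coefficients and extract the generators by K\"onig's lemma on a finitely branching tree --- interchangeable forms of the same compactness step.
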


\begin{proof}
We essentially construct a $\leqg$-complete compact subset of $\mathbb Z^\omega$.

For each $m \in \omega$, let $A^k_0, A^k_1, \ldots$ list all finite subsets of $\mathbb Z^k$ which contain $0^k$ and are such that $-A^k_j = A^k_j$.  Let $I^k_j$ ($k,j \in \omega$) partition $\omega$, with each $I^k_j$ an interval of length $k$.  Define $K_0 \subset \mathbb Z^\omega$ by
\[ 
x\in K_0 \iff (\forall k,j) (x\upto I^k_j \in A^k_j).
\]
Note that $K_0$ is compact and $-K_0 = K_0$.  Consider $\langle K_0 \rangle$ (the subgroup generated by $K_0$).  We show that $\langle K_0 \rangle$ is universal for compactly generated subgroups of $\mathbb Z^\omega$. 

Suppose that $\langle K \rangle$ is any compactly generated subgroup.  With no loss of generality, we assume that $-K = K$ and $\bar 0 \in K$.  There is a pruned tree $T$ on $\mathbb Z$ such that $K = [T]$.  Since $K$ is compact, all levels of $T$ must be finite.  For each $k$, choose $\tau(k) \in \omega$ such that $A^k_{\tau(k)} = T \cap \mathbb Z^k$.  Define a homomorphism $\varphi: \mathbb Z^\omega \rightarrow \mathbb Z^\omega$ by 
\[
\varphi(x) \upto I^k_j =
\begin{cases}
x\upto k \quad &\mbox{if $j = \tau(k)$,}\\ 
 0^k &\mbox{otherwise.}
\end{cases}
\]
Observe that $\varphi^{-1}(K_0) = K$.  The following claim will complete the proof of this theorem.

\begin{claim}
$\varphi^{-1} (\langle K_0 \rangle) = \langle K \rangle$.
\end{claim}

\proofclaim  Suppose that $x \in \langle K \rangle$, with $x_1, \ldots , x_m \in K$ such that $x = x_1 + \ldots + x_m$.  (Note that, since $-K = K$, all elements of $\langle K \rangle$ are finite sums of elements of $K$.)  Then $\varphi (x_1) , \ldots , \varphi (x_m) \in K_0$ and hence $\varphi (x) = \varphi (x_1) + \ldots + \varphi (x_m) \in \langle K_0 \rangle$.

Suppose, on the other hand, that $\varphi (x) \in \langle K_0 \rangle$, with $y_1, \ldots , y_m \in K_0$ such that $\varphi (x) = y_1 + \ldots + y_m$.  (Again, because $-K_0 = K_0$, $\langle K_0 \rangle$ is the set of finite sums of members of $K_0$.)  We want $x_1 , \ldots , x_m \in K$ with $x = x_1 + \ldots + x_m$. 

For each $i\leq m$, let $v^k_i = y_i \upto I^k_{\tau(k)}$.  Since each $y_i \in K_0$, the definition of $K_0$ implies that each 
\[
v^k_i \in A^k_{\tau(k)} = T \cap \mathbb Z^k.
\]
Hence (because $T$ is pruned) there exists $x^k_i \in K$ such that 
\[
x^k_i \upto k = v^k_i.
\]

By the compactness of $K$, we may iteratively (for $i \leq m$) take convergent subsequences of $(x^k_i)_{k\in \omega}$ to obtain a common subsequence $k_0 < k_1 < \ldots$ such that, for each $i \leq m$, $(x^{k_n}_i)_{n\in \omega}$ is convergent, with limit $x_i \in K$.  Finally, fix $p$ and let $k_n \geq p$ be large enough that $x_i^{k_n} \upto p = x_i \upto p$, for each $i \leq m$.  Thus
\begin{align*}
x \upto p
&= \sum_{i \leq p} v^{k_n}_i \upto p\\
&= \sum_{i \leq p} x^{k_n}_i \upto p \quad (\mbox{because } k_n \geq p)\\
&= \sum_{i \leq p} x_i \upto p.
\end{align*}
As $p$ was arbitrary, we have $x = \sum_{i \leq m} x_i \in \langle K \rangle$.  This completes the proof.
\end{proof}

\begin{corollary}\label{uk}
There is a universal $K_\sigma$ subgroup of $\mathbb Z^\omega$.
\end{corollary}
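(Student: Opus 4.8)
The plan is to deduce this immediately from the two results already established for $\mathbb Z^\omega$: the existence of a universal compactly generated subgroup (Theorem~\ref{ucg}) and the coincidence of the $K_\sigma$ and compactly generated subgroups of $\mathbb Z^\omega$. The latter is exactly the first enumerated consequence of Theorem~\ref{T4}, which holds because every subgroup of $\mathbb Z$ is singly generated, hence finitely generated. So the entire argument rests on recognizing that in the group $\mathbb Z^\omega$ the class of $K_\sigma$ subgroups and the class of compactly generated subgroups are literally the same collection.

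Concretely, I would take $\langle K_0 \rangle$ to be the universal compactly generated subgroup of $\mathbb Z^\omega$ produced in the proof of Theorem~\ref{ucg}, and claim that this same subgroup is a universal $K_\sigma$ subgroup. First I would check that $\langle K_0 \rangle$ is itself a $K_\sigma$ subgroup: this is automatic, since every compactly generated subgroup is $K_\sigma$ (as noted at the start of Section~\ref{S4.1}). Next I would verify the universality condition. Let $H \subseteq \mathbb Z^\omega$ be an arbitrary $K_\sigma$ subgroup. By Theorem~\ref{T4} (with $G = \mathbb Z$), $H$ is compactly generated. Therefore, by the universality of $\langle K_0 \rangle$ for compactly generated subgroups, we have $H \leqg \langle K_0 \rangle$. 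Since $H$ was an arbitrary $K_\sigma$ subgroup, this shows that $\langle K_0 \rangle$ is universal for $K_\sigma$ subgroups of $\mathbb Z^\omega$, as required.

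There is essentially no obstacle here; the content of the corollary is entirely carried by the two theorems it combines. The only point that warrants an explicit remark is that the coincidence of the two classes is genuinely two-sided, so that the universal object for one class serves as a universal object for the other without any further construction. In particular, no new coding, homomorphism, or compactness argument is needed beyond what was already carried out for Theorem~\ref{ucg}; the corollary is a direct transfer along the equality of classes supplied by Theorem~\ref{T4}.
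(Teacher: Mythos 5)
Your proposal is correct and is exactly the paper's argument: the paper's proof of this corollary likewise invokes Theorem~\ref{T4} to conclude that every $K_\sigma$ subgroup of $\mathbb Z^\omega$ is compactly generated, so that the universal compactly generated subgroup from Theorem~\ref{ucg} (itself $K_\sigma$, being compactly generated) already serves as a universal $K_\sigma$ subgroup. No discrepancy to report.
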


\begin{proof}
Since every $K_\sigma$ subgroup of $\mathbb Z^\omega$ is compactly generated by Theorem~\ref{T4}, Theorem~\ref{ucg} actually gives a universal $K_\sigma$ subgroup of $\mathbb Z^\omega$.
\end{proof}

\subsubsection{Proof of Theorem~\ref{T1}}

In this section, we prove the following theorem.  

\begin{refthmT1}
Let $(G_n)_{n \in \omega}$ be a sequence of locally compact Polish groups, each term of which occurs infinitely often (up to isomorphism).  We have the following;
\begin{enumerate}
\item $\prod_n G_n$ has a universal compactly generated subgroup.
\item $\prod_n G_n$ has a universal $K_\sigma$ subgroup.
\end{enumerate}
\end{refthmT1}

Note that if every $K_\sigma$ subgroup of $\prod_n G_n$ is reducible to a compactly generated subgroup, then part (1) of this result implies part (2).  On the other hand, in Section~\ref{S4.1} we saw examples of $K_\sigma$ subgroups of Polish groups which do not reduce to compactly generated subgroups.  In such cases, (1) and (2) remain distinct results.

As mentioned in the Introduction, the following is a corollary of Theorem~\ref{T1}.

\begin{corollary}\label{k power}
If $G$ is locally compact, then $G^\omega$ has universal compactly generated and $K_\sigma$ subgroups.
\end{corollary}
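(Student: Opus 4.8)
The plan is to obtain this statement as an immediate specialization of Theorem~\ref{T1} to a constant sequence. Concretely, I would set $G_n = G$ for every $n \in \omega$, so that the product $\prod_n G_n$ is literally the countable power $G^\omega$. The only thing to check is that the hypotheses of Theorem~\ref{T1} are satisfied by this choice, and this is where the ``each term occurs infinitely often'' clause is doing no real work: for a constant sequence the single isomorphism type $G$ appears at every index, hence certainly infinitely often.

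First I would note that, by assumption, $G$ is a locally compact Polish group, so each $G_n = G$ is a locally compact Polish group and the sequence $(G_n)_{n \in \omega}$ meets the repetition hypothesis trivially. Having verified this, I would simply invoke Theorem~\ref{T1}: part (1) produces a universal compactly generated subgroup of $\prod_n G_n = G^\omega$, and part (2) produces a universal $K_\sigma$ subgroup of the same group. Since $\prod_n G_n$ and $G^\omega$ denote the same Polish group here, both conclusions transfer verbatim to $G^\omega$, which is exactly what the corollary asserts.

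I do not expect any genuine obstacle: all of the mathematical content resides in Theorem~\ref{T1}, and this corollary is purely the observation that a constant sequence is a legitimate instance of the theorem's hypothesis. The only point worth stating explicitly is that the ``infinitely often'' requirement, which in the general statement is needed to supply enough copies of each factor to encode arbitrary compact generating data, is automatic when all the factors coincide. Consequently the proof is a one-line appeal to Theorem~\ref{T1} with $(G_n)_{n\in\omega}$ the constant sequence equal to $G$.
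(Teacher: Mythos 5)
Your proof is correct and is exactly how the paper obtains this corollary: the paper states it as an immediate consequence of Theorem~\ref{T1}, applied to the constant sequence $G_n = G$, for which the ``each term occurs infinitely often'' hypothesis holds trivially. Nothing further is needed.
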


For most of the examples we consider in Section~\ref{S7}, we will only use the statement of Corollary~\ref{k power}.  

Our key lemma in the proof of Theorem~\ref{T1} is a restricted, but refined, version of Theorem~\ref{T1}(1).  (Recall that for an $m$-ary group word $w$, we define $w[K] = \{ w (x_1 , \ldots , x_m) : x_1 , \ldots , x_m \in A\}$.)

\begin{lemma}\label{loki}
Let $G$ be a locally compact Polish group with identity element $\mathbf 1$.  There exists a compact set $K_0 \subseteq G^\omega$ with $\bar{\bf 1} \in K_0$ and the property that for each compact $K \subseteq G^\omega$, with $\bar{\bf 1} \in K$, there is a continuous group homomorphism $\varphi : G^\omega \rightarrow G^\omega$ such that, for each group word $w$,
\[
\varphi^{-1} (w[K_0]) = w [K].
\]
In particular, $\langle K_0 \rangle$ is a universal compactly generated subgroup of $G^\omega$.
\end{lemma}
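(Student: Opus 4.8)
The plan is to mimic the proof of Theorem~\ref{ucg}, encoding a compact $K$ by the sequence of its finite projections and realizing the defining conditions on disjoint blocks. First I would record the structural facts I need about $K$. Writing $p_k : G^\omega \to G^k$ for the projection onto the first $k$ coordinates and $L_k = p_k(K)$, each $L_k$ is compact, $\mathbf 1^k \in L_k$, and a standard inverse-limit argument gives $K = \{x \in G^\omega : (\forall k)(x \upto k \in L_k)\}$. Thus it suffices to build a fixed compact $K_0$ and, for each $K$, a continuous homomorphism $\varphi$ forcing exactly the conditions $x \upto k \in L_k$. As in Theorem~\ref{ucg}, I would take $\varphi$ to be a \emph{coordinate homomorphism} --- each output coordinate is either a single fixed input coordinate or $\mathbf 1$ --- so that $\varphi$ is automatically a continuous homomorphism even when $G$ is non-abelian, and $\varphi(w(x_1,\ldots,x_m)) = w(\varphi(x_1),\ldots,\varphi(x_m))$ for every group word $w$.

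The main obstacle is that, unlike $\mathbb Z^k$, the group $G^k$ has uncountably many compact subsets, so I cannot simply list all possible block contents $A^k_j$ and let $\varphi$ select the one equal to $L_k$. I would resolve this using local compactness. Fix an increasing sequence of compact symmetric sets $C_0 \subseteq C_1 \subseteq \cdots$ with $\mathbf 1 \in C_0$ and $\bigcup_m C_m = G$ (possible since a locally compact Polish group is $\sigma$-compact), and a countable base $\mathcal V$ for $G^k$. Let $\mathcal A_k$ be the countable family of compact sets of the form $C_m^k \setminus (V_1 \cup \cdots \cup V_r)$ with each $V_i \in \mathcal V$ and $\mathbf 1^k \notin V_i$; note every member of $\mathcal A_k$ contains $\mathbf 1^k$. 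The key point is outer approximation: for any compact $L \ni \mathbf 1^k$ contained in some $C_m^k$, listing the basic $V$ disjoint from $L$ and removing them one at a time yields a \emph{decreasing} sequence $A^k_0 \supseteq A^k_1 \supseteq \cdots$ in $\mathcal A_k$ with $\bigcap_\ell A^k_\ell = L$, since any point outside $L$ lies in some basic $V$ missing $L$. I would then let blocks $I^k_j$ ($j$ indexing $\mathcal A_k$) partition $\omega$, each of length $k$, and set $K_0 = \{x : (\forall k,j)(x \upto I^k_j \in A^k_j)\}$; this is compact (every coordinate is confined to a compact set) and contains $\bar{\mathbf 1}$. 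Given $K$, I define $\varphi$ to copy $x \upto k$ into every block $I^k_{j(k,\ell)}$ realizing the decreasing approximation of $L_k$, and to output $\mathbf 1$ elsewhere. Since each non-selected block constraint is met by $\mathbf 1^k$, one reads off $\varphi(x) \in K_0 \iff (\forall k,\ell)(x \upto k \in A^k_{j(k,\ell)}) \iff (\forall k)(x \upto k \in L_k) \iff x \in K$, i.e.\ $\varphi^{-1}(K_0) = K$.

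It remains to upgrade this to $\varphi^{-1}(w[K_0]) = w[K]$ for every group word $w$. The inclusion $w[K] \subseteq \varphi^{-1}(w[K_0])$ is immediate from $\varphi(K) \subseteq K_0$ and the fact that $\varphi$ is a homomorphism. For the reverse inclusion, suppose $\varphi(x) = w(y_1, \ldots, y_m)$ with $y_i \in K_0$. Restricting to a selected block $I^k_{j(k,\ell)}$ gives $x \upto k = w(u^{(\ell)}_1, \ldots, u^{(\ell)}_m)$, where $u^{(\ell)}_i := y_i \upto I^k_{j(k,\ell)} \in A^k_{j(k,\ell)}$. Because the $A^k_{j(k,\ell)}$ decrease and are closed, and all $u^{(\ell)}_i$ lie in the single compact set $C_{m_k}^k$, I would extract (diagonally over $i \le m$) a subsequence with $u^{(\ell)}_i \to u^\ast_i \in \bigcap_\ell A^k_{j(k,\ell)} = L_k$; continuity of $w$ and constancy of the left side give $w(u^\ast_1, \ldots, u^\ast_m) = x \upto k$. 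Since $u^\ast_i \in L_k = p_k(K)$, each extends to some $x^k_i \in K$ with $w(x^k_1, \ldots, x^k_m) \upto k = x \upto k$. A second diagonal compactness argument over $k$, exactly as in the proof of Theorem~\ref{ucg}, produces $x_i \in K$ with $w(x_1, \ldots, x_m) = x$, so $x \in w[K]$. Taking the union over all group words then yields $\varphi^{-1}(\langle K_0 \rangle) = \langle K \rangle$, so that $\langle K_0 \rangle$ is a universal compactly generated subgroup. The delicate step, which I would write out most carefully, is precisely the passage from the outer approximations $A^k_{j(k,\ell)}$ back down to $L_k$: the decreasing-intersection setup and the nested compact sets $C_{m_k}^k$ are essential there, and this is the one place where the argument genuinely goes beyond the discrete case of Theorem~\ref{ucg}.
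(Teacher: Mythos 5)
Your proof is correct, but it resolves the central difficulty --- that $G^k$ has uncountably many compact subsets, so one cannot simply enumerate all possible block contents as in Theorems~\ref{closed} and \ref{ucg} --- by a genuinely different device than the paper. The paper discretizes \emph{elements}: it fixes a countable dense set $D \subseteq G$ and a neighborhood base at $\mathbf 1$ with compact closures, codes a compact $K$ by the finite sets $\mathcal B_k \subseteq D^k$ of least $k$-approximations of its members (this finiteness, Lemma~\ref{bounded}, is exactly where local compactness enters), and builds $K_0$ from an enumeration of all finite subsets of $D^k$ containing $\mathbf 1^k$, with the \emph{fattened} block constraint $(\exists u \in A^k_j)(u \approx_k x \upto I^k_j)$; a single block per level $k$ is selected, and the reverse inclusion is pushed through an $\approx_k$-calculus. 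You instead discretize \emph{compact sets}: you code $K$ by its projections $L_k$, approximate each $L_k$ from outside by a decreasing sequence drawn from the countable family of compact sets $C_m^k \setminus (V_1 \cup \dots \cup V_r)$, impose \emph{exact} membership on blocks, select infinitely many blocks per level $k$, and recover $L_k$ as a decreasing intersection of closed sets inside a fixed compact set. What your route buys is the complete elimination of the approximation calculus: both of your limit arguments are clean compactness extractions, and exact membership makes $\varphi^{-1}(K_0) = K$ immediate. The cost is that you invoke $\sigma$-compactness of $G$ globally (the paper uses local compactness only through compact closures of neighborhoods) and need infinitely many coding blocks per level, which is harmless.

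One point needs repair when you write this up. An increasing sequence of compacta with union $G$ need not absorb every compact subset of $G$: for $G = \RR$, the sets $C_m = \{0\} \cup \{x : 1/m \le |x| \le m\}$ are symmetric, compact, increasing, contain the identity, and exhaust $\RR$, yet no $C_m$ contains $[0,1]$. Your construction genuinely needs some $C_m^k \supseteq L_k$ in order to start the outer approximation, so you must choose the exhaustion with $C_m \subseteq \mathrm{int}(C_{m+1})$; such a choice is possible in any locally compact $\sigma$-compact Hausdorff space and guarantees that every compact subset of $G$ (hence every coordinate projection of every $L_k$) lies in some $C_m$. With that standard adjustment, the remaining steps --- the identification $K = \{x : (\forall k)(x \upto k \in L_k)\}$ for closed $K$, the word-compatibility of coordinate homomorphisms, and the two diagonal compactness arguments --- all go through as you describe.
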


\subsubsection*{Basic notions}

We begin with some notation and facts we will use in the proof of Lemma~\ref{loki}.  From now on, fix a locally compact Polish group $G$, with identity element $\mathbf 1$.

The following lemma gives a neighborhood base at $\mathbf 1$ with the specific properties we require.

\begin{lemma}\label{nbhd base}
There is a neighborhood base $\{ U_k \}$ at $\mathbf 1$ such that
\begin{enumerate}
\item Each $U_k$ has compact closure.
\item $U_0 \supseteq U_1 \supseteq \ldots$.
\item For each $k$, $U_k^{-1} = U_k$.
\item For each $k > 0$, $\cl (U_k U_k)  \subseteq U_{k-1}$.
\end{enumerate}
\end{lemma}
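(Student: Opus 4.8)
The plan is to construct the neighborhood base $\{U_k\}$ by induction on $k$, starting from an arbitrary countable neighborhood base and successively shrinking and symmetrizing. First I would invoke local compactness and metrizability of $G$ to fix \emph{some} countable neighborhood base $\{V_k\}$ at $\mathbf 1$ consisting of open sets with compact closure; this is possible because in a locally compact metrizable group the point $\mathbf 1$ has a compact neighborhood, and one can intersect the standard metric balls with the interior of that neighborhood to obtain a decreasing base whose members all have compact closure. This immediately secures properties (1) and (2) in a preliminary form, and the remaining work is to arrange symmetry (3) and the ``square'' condition (4) while preserving (1) and (2).

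\medskip

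The main construction is a recursion producing $U_0 \supseteq U_1 \supseteq \cdots$. At stage $k=0$, set $U_0 = V_0 \cap V_0^{-1}$; since the inverse map is a homeomorphism, $V_0^{-1}$ is open with compact closure, so $U_0$ is an open symmetric neighborhood of $\mathbf 1$ with compact closure, giving (1) and (3) for $k=0$. For the inductive step, suppose $U_{k-1}$ has been defined as an open symmetric neighborhood of $\mathbf 1$ with compact closure. The key point is that continuity of the multiplication map $G \times G \to G$ at $(\mathbf 1, \mathbf 1)$ yields an open neighborhood $W$ of $\mathbf 1$ with $WW \subseteq U_{k-1}$. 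I would then want not merely $WW \subseteq U_{k-1}$ but $\cl(WW) \subseteq U_{k-1}$; to get this, I would instead first choose $W$ so that $\cl(W)\cl(W) \subseteq U_{k-1}$ (possible by applying the same continuity argument to land inside an intermediate neighborhood whose closure sits in $U_{k-1}$, using that locally compact groups are regular), and then note $\cl(WW) \subseteq \cl(W)\cl(W)$ since $\cl(W)\cl(W)$ is the continuous image of the compact set $\cl(W)\times\cl(W)$ and hence closed. Finally define
\[
U_k = W \cap W^{-1} \cap V_k \cap U_{k-1}.
\]
This is an open symmetric (by intersecting with $W^{-1}$) neighborhood of $\mathbf 1$ contained in $U_{k-1}$ (giving (2)) and in $V_k$, with compact closure (as a subset of the compact set $\cl(U_{k-1})$), and $\cl(U_k U_k) \subseteq \cl(WW) \subseteq U_{k-1}$, delivering (4).

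\medskip

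Two verifications remain, both routine. One must confirm that $\{U_k\}$ is genuinely a neighborhood base at $\mathbf 1$: since $U_k \subseteq V_k$ and $\{V_k\}$ was a base, every neighborhood of $\mathbf 1$ contains some $V_k$ and hence, after possibly passing to a later index, some $U_k$; more directly, each $U_k \subseteq V_k$ ensures the $U_k$ shrink at least as fast as the $V_k$. One must also confirm symmetry is preserved at every stage, which holds because each $U_k$ is explicitly intersected with its own defining symmetric ingredients, and the final formula $U_k = W \cap W^{-1} \cap V_k \cap U_{k-1}$ is symmetric provided $U_{k-1}$ is (true by induction) and provided $V_k$ contributes symmetrically---which I would handle by replacing $V_k$ with $V_k \cap V_k^{-1}$ at the outset so that every ingredient in the intersection is symmetric.

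\medskip

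The step I expect to be the main obstacle is upgrading $WW \subseteq U_{k-1}$ to the \emph{closed} containment $\cl(U_k U_k) \subseteq U_{k-1}$ demanded by (4); the subtlety is that a continuous image of a closure need not be the closure of the image, so one cannot naively close up both sides of $WW \subseteq U_{k-1}$. The resolution, as sketched above, is to exploit compactness: choosing $W$ with $\cl(W)$ compact makes the product $\cl(W)\cl(W)$ a \emph{closed} set (continuous image of a compact set), and the chain $\cl(U_k U_k) \subseteq \cl(\cl(W)\cl(W)) = \cl(W)\cl(W) \subseteq U_{k-1}$ then closes the argument. Everything else is a standard exercise in the topology of metrizable groups.
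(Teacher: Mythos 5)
Your proof is correct and takes essentially the same approach as the paper: an inductive construction that shrinks inside a fixed countable base (to retain the base property), symmetrizes, and uses continuity of multiplication to secure the condition $\cl(U_kU_k) \subseteq U_{k-1}$. The only differences are cosmetic --- the paper symmetrizes by setting $U_{k+1} = W^{-1}W$ rather than intersecting $W \cap W^{-1}$, and it simply cites continuity for the existence of a neighborhood $V$ with $\cl(VV) \subseteq V_k \cap U_k$, where you instead derive the closed containment from compactness of $\cl(W)$ forcing $\cl(W)\cl(W)$ to be closed.
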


\begin{proof}

We construct the $U_k$ inductively.  Let $V_0 \supseteq V_1 \supseteq \ldots \ni \mathbf 1$ be any ``nested'' neighborhood base at $\mathbf 1$, such that $\cl (V_0)$ is compact.  (Such $V_k$ exist since $G$ is locally compact.)  Let $U_0 = V_0$.  Suppose that $U_0 \supseteq \ldots \supseteq U_k$ are given with the desired properties.  By the continuity of the group operation, there is a neighborhood $V$ of $\mathbf 1$ such that $\cl (V V) \subseteq V_k \cap U_k$.  By the continuity of the map $(x,y) \mapsto x^{-1}y$, there is a neighborhood $W$ of $\mathbf 1$ such that $W^{-1} W \subseteq V$.  Let $U_{k+1} = W^{-1}W$.  Then $(U_{k+1})^{-1} = U_{k+1}$ and 
\[ 
\cl (U_{k+1}U_{k+1}) \subseteq \cl (VV) \subseteq U_k.
\]
\end{proof}

Fix a neighborhood base $\{ U_k \}$, as in the lemma above.  For $a,b \in G$, write $a \approx_k b$ (``$a$ $k$-approximates $b$'') if, and only if, $a^{-1}b \in \cl ( U_k )$.  Note that, by the properties of the $U_k$, 
\begin{enumerate}
\item $a \approx_k a$
\item $a \approx_k b \iff b \approx_k a$
\item $a \approx_k b \approx_k c \implies a \approx_{k-1} c$
\item $(a \approx_k b \ \& \ k' \leq k) \implies a \approx_{k'} b$
\item $\lim_n a_n = a \iff (\forall k)(\forall^\infty n ) (a_n \approx_k a)$.
\end{enumerate}
If $x,y \in G^\omega$ (or $G^p$), we will write $x \approx_k y$ to indicate that $x(i) \approx_k y(i)$, for each $i \in \omega$ (or $i < p$).  Item 5 above implies that for $x, x_n \in G^\omega$
\begin{equation}\label{E2}
\lim_n x_n = x \iff (\forall p,k)(\forall^\infty n) (x_n \upto p \approx_k x \upto p) 
\end{equation}
Also note that, for each $k$ and fixed $a_0 \in G$, the set
\[\{ a \in G : a_0 \approx_k a \}\]
is compact.

Fix a countable dense set $D \subseteq G$, with $\mathbf 1 \in D$.  Let $\mathfrak n \leq \omega$ be the cardinality of $D$, and $\# : D \longleftrightarrow \mathfrak n$ be a bijection, with $\# \mathbf 1 = 0$.

For $x \in G^\omega$ and $k \in \omega$, we define a sequence $\beta_x^k \in D^\omega$ (which we call the {\it least $k$-approximation} of $x$) as follows; for each $i$, let $a_i \in D$ be the element with $\# a_i$ least such that $a_i \approx_k x(i)$.  Define $\beta_x^k \in D^\omega$ by 
\[ 
(\forall i) (\beta_x^k (i) = a_i).
\]
Given a closed set $K \subseteq G^\omega$ and $k \in \omega$, let 
\[ 
\mathcal B_k = \{ \beta_x^k \upto k : x \in K \}.
\]
Since $K$ is closed, \eqref{E2} above implies that $x \in K$ if, and only if, $(\forall k) (\beta_x^k \upto k \in \mathcal B_k)$.  We have the following fact.

\begin{lemma}\label{bounded}
If $K \subseteq G^\omega$ is compact, then $\{ \beta_x^k (n) : x \in K\}$ is finite, for each $k,n\in \omega$.  In particular, each $\mathcal B_k$ is finite.
\end{lemma}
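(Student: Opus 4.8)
The plan is to observe that $\beta_x^k(n)$ depends only on the single coordinate $x(n)$, reduce the claim to a statement about the compact set $\{x(n) : x \in K\}$, and then exploit the ``least index'' nature of the approximation together with the nesting property $\cl(U_{k+1}U_{k+1}) \subseteq U_k$ from Lemma~\ref{nbhd base}. The point worth flagging at the outset is why a naive argument fails: the set of all $a \in D$ that $k$-approximate \emph{some} point of a compact set is in general only countable, so finiteness must come from the selection of the \emph{least-indexed} approximant, not merely from compactness of the set of approximants.

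First I would fix $k,n$ and introduce the map $\beta^k : G \to D$ sending $g$ to the element $a \in D$ with $\# a$ least such that $a \approx_k g$. This is well-defined: since $D$ is dense, the open set $gU_k$ meets $D$, and any $a = gu \in D$ with $u \in U_k$ satisfies $a^{-1}g = u^{-1} \in U_k^{-1} = U_k \subseteq \cl(U_k)$, i.e.\ $a \approx_k g$. By construction $\beta_x^k(n) = \beta^k(x(n))$, so that $\{ \beta_x^k(n) : x \in K\} = \beta^k(C)$, where $C = \{ x(n) : x \in K\}$ is the image of $K$ under the (continuous) $n$th coordinate projection and is therefore compact.

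Next I would cover $C$ by the open sets $\{ gU_{k+1} : g \in C\}$ and, by compactness, extract a finite subcover $C \subseteq \bigcup_{j=1}^m g_j U_{k+1}$. For each $j$ choose $a_j \in D \cap g_j U_{k+1}$ (nonempty by density). The key computation is that $a_j \approx_k g$ for \emph{every} $g \in g_j U_{k+1}$: writing $a_j = g_j u$ and $g = g_j u'$ with $u,u' \in U_{k+1}$, we get $a_j^{-1}g = u^{-1}u' \in U_{k+1}^{-1}U_{k+1} = U_{k+1}U_{k+1} \subseteq \cl(U_{k+1}U_{k+1}) \subseteq U_k \subseteq \cl(U_k)$, using the symmetry and nesting of the $U_k$. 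Since $a_j$ is then one of the $D$-elements $k$-approximating $g$, the minimality in the definition of $\beta^k$ gives $\# \beta^k(g) \leq \# a_j$ for all $g \in g_j U_{k+1}$.

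Setting $N = \max_{j \leq m} \# a_j$, every $g \in C$ lies in some $g_j U_{k+1}$ and hence satisfies $\# \beta^k(g) \leq N$. Therefore $\beta^k(C) \subseteq \{ a \in D : \# a \leq N\}$, which has at most $N+1$ elements because $\#$ is a bijection; this proves $\{ \beta_x^k(n) : x \in K\}$ is finite. For the final sentence of the lemma, I would note that $\mathcal B_k$ is contained in the product $\prod_{n < k} \{ \beta_x^k(n) : x \in K\}$ of $k$ finite sets, hence is itself finite. The main obstacle, as noted, is precisely the passage from the countable ``set of all approximants'' to a finite set of values; the finite subcover converts into a uniform numerical bound $N$ on the indices $\#\beta^k(g)$, and it is this uniform bound that forces finiteness.
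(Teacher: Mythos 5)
Your proof is correct and follows essentially the same route as the paper's: compactness of the coordinate image $\{x(n) : x \in K\}$ yields a finite set of elements of $D$ that $k$-approximate every point, and the minimality of $\#$ in the definition of $\beta_x^k$ converts this into a uniform bound on $\#\beta_x^k(n)$, hence finiteness. The only difference is that you spell out, via the finite subcover by translates of $U_{k+1}$ and the nesting property $\cl(U_{k+1}U_{k+1}) \subseteq U_k$, the existence of the finite approximating set $F_n$ that the paper asserts without detail.
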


\begin{proof}
Since $K$ is compact, so is the set $A =  \{ x(n) : x \in K \} \subseteq G$.   There is thus a finite set $F_n \subseteq D$ such that, for each $x \in K$, there is an $a \in F_n$ with $x(n) \approx_k a$.  As $\beta_x^k (n)$ is the \#-least element of $D$ which $k$-approximates $x(n)$, we conclude that $\# \beta_x^k (n) \leq \max \{ \# a : a \in F_n \}$, for each $x \in K$.  Hence $\{ \beta_x^k (n) : x \in K \}$ is finite.

This implies that each $\mathcal B_k$ is finite, since $\mathcal B_k \subseteq \prod_{n<k} F_n$.
\end{proof}

\subsubsection*{Proof of Lemma~\ref{loki}}

Fix a locally compact group $G$ and let $D$, \#, $\approx_k$ be defined as above for $G$.  For each $k \in \omega$, let $A^k_0 , A^k_1 , \ldots \subseteq D^k$ be such that, for each $k,j$, we have
\begin{itemize}
\item $A^k_j$ is finite.
\item $\mathbf 1^k \in A^k_j$.
\item For each finite $A \subseteq D^k$, with $\mathbf 1^k \in A$, there exists $j$ such that $A = A^k_j$.
\end{itemize}
Let $I^k_j$ (for $k,j \in \omega$) be intervals partitioning $\omega$ such that each $I^k_j$ has length $k$.  Define $K_0 \subseteq G^\omega$ by 
\[
x \in K_0 \iff (\forall k,j) (\exists u \in A^k_j) (u \approx_k x \upto I^k_j).
\]
Note that $K_0$ is compact since ``$u \approx_k x \upto I^k_j\,$'' defines a compact subset of $G^k$ and the existential quantifier is over a finite set.  We shall show that $\langle K_0 \rangle$ has the property that for any compact $K \subseteq G^\omega$, containing $\bar 1$, there is a continuous homomorphism $\varphi : G^\omega \rightarrow G^\omega$ with
\[
\varphi^{-1} (w [K_0]) = w [K],
\]
for each group word $w$.

Let $K$ be an arbitrary compact subset of $G^\omega$, with $\bar{\bf 1}\in K$.  For each $k$, let
\[
\mathcal B_k = \{ \beta_x^k \upto k : x \in K \}
\]
be as above.  As we remarked in Lemma~\ref{bounded}, the compactness of $K$ implies that each $\mathcal B_k$ is finite.  Since $\bar{\bf 1}$ is its own least $k$-approximation, each $\mathcal B_k$ contains $\mathbf 1^k$.  For each $k \in \omega$, we may therefore choose $\tau(k) \in \omega$ such that 
\[
A^k_{\tau(k)} = \mathcal B_k. 
\]
Define a continuous group homomorphism $\varphi : G^\omega \rightarrow G^\omega$ by 
\[
\varphi(x) \upto I^k_j =
\begin{cases}
x \upto k   \quad&\mbox{if } j = \tau(k),\\
\mathbf 1^k  &\mbox{otherwise.}\\
\end{cases}
\]
Fix an $m$-ary group word $w$.  The following two claims will complete the proof.

\begin{claim1}
$x \in w [K] \implies \varphi (x) \in w [K_0]$.
\end{claim1}

\proofclaim  Since $\varphi$ is a group homomorphism, it will suffice to show that $x \in K \implies \varphi (x) \in K_0$.  Suppose that $x \in K$.  For each $k$, let 
\[
u_k = \beta_x^k \upto k \in \mathcal B_k = A^k_{\tau(k)}.
\]
Hence $u_k \approx_k x \upto k = \varphi (x) \upto I^k_{\tau(k)}$.  On the other hand, if $j \neq \tau(k)$, then $\varphi(x) \upto I^k_j = \mathbf 1^k \in A^k_j$.  Putting these together, we see that 
\[ 
(\forall k,j)(\exists u \in A^k_j) (u \approx_k \varphi (x) \upto I^k_j).
\]
Thus $\varphi(x) \in K_0$.  This completes the claim.

\begin{claim2}
$\varphi(x) \in w [K_0] \implies x \in w [K]$.
\end{claim2}

\proofclaim  Let $y_1 , \ldots , y_m \in K_0$ be such that $\varphi(x) = w (y_1 , \ldots , y_m)$.  We will find $x_1 , \ldots , x_m \in K$ such that $x = w (x_1 , \ldots , x_m)$ and conclude that $x \in w [K]$.

For each $k,i$, let 
\[
v^k_i = y_i \upto I^k_{\tau(k)} 
\]
and let $u^k_i \in A^k_{\tau(k)} = \mathcal B_k$ be such that $u^k_i \approx_k v^k_i$.  By the definition of $\mathcal B_k$, there exist $x^k_i \in K$ such that $u^k_i \approx_k x^k_i \upto k$, for each $k$ and $i \leq m$.  Since $K$ is compact, we may take $k_0 < k_1 < \dots$ and $x_1, \ldots , x_m \in K$ with $\lim_n x_i^{k_n} = x_i$, for each $i \leq m$.

Let $z^k_i = v^k_i \concat \bar{\bf 1}$.  We claim that
\[ 
\lim_n z^{k_n}_i = x_i. 
\]
Indeed, fix $p,r \in \omega$ and let $M$ be large enough that whenever $k_n \geq M$, we have 
\[ 
x^{k_n}_i \upto r \approx_{p+2} x_i \upto r.
\]
The existence of $M$ follows from \eqref{E2}, since $\lim_n x^{k_n}_i = x_i$.  We may assume that $M >r, p+2$ and so if $k_n \geq M$, we have
\begin{align*}
z^{k_n}_i \upto r 
&= v^{k_n}_i \upto r\\
&\approx_{p+2} u^{k_n}_i \upto r\\
&\approx_{p+2} x^{k_n}_i \upto r\\
&\approx_{p+2} x_i \upto r.
\end{align*}
Hence $z^{k_n}_i \upto r \approx_p x_i \upto r$, for all $k_n \geq M$.  As $p,r$ were arbitrary, we conclude (again, by \eqref{E2}) that $z^{k_n}_i \rightarrow x_i$ as $n \rightarrow \infty$.

We may now finish the claim.  Observe that for fixed $r$ and each $k_n > r$, we have 
\begin{align*}
x \upto r
&= (\varphi(x) \upto I^{k_n}_{\tau(k_n)}) \upto r\\
&= w (v^{k_n}_1 , \ldots , v^{k_n}_m) \upto r\\
&= w (z^{k_n}_1 , \ldots , z^{k_n}_m) \upto r.
\end{align*}
Taking the limit as $n \rightarrow \infty$ and using the fact that $w$ induces a continuous function $G^{rm} \rightarrow G^r$ we conclude that 
\[ 
x \upto r = w (x_1 , \ldots , x_m) \upto r. 
\]
Since $r$ was arbitrary, $x = w(x_1 , \ldots , x_m) \in w [K]$.  This completes the proof.\qed

\subsubsection*{Proof of main result}

We first prove (1) of Theorem~\ref{T1} and then prove (2) from (1).

\begin{proof}[Proof of Theorem~\ref{T1}(1)]

Let $(G_n)_{n \in \omega}$ be a sequence of locally compact Polish groups, with each term occuring infinitely often up to isomorphism.  This implies that $\prod_n G_n \cong \prod_n (G_0^\omega \times \ldots \times G_n^\omega) \cong \prod_n G_n^\omega$.  It will therefore suffice to show that there is a compactly generated subgroup of $\prod_n (G_0^\omega \times \ldots \times G_n^\omega)$ which is universal for compactly generated subgroups of $\prod_n G_n^\omega$.  

For each $n$, note that $G_0^\omega \times \ldots \times G_n^\omega \cong (G_0 \times \ldots \times G_n)^\omega$.  As the direct product of finitely many locally compact groups, $G_0 \times \ldots \times G_n$ itself is locally compact.  Therefore take compact sets $K_n \subseteq G_0^\omega \times \ldots \times G_n^\omega$ with $\bar{\bf 1} \in K_n$, as in Lemma~\ref{loki}, such that, for any compact $K \subseteq G_0^\omega \times \ldots \times G_n^\omega$ with $\bar{\bf 1} \in K$, there is a continuous endomorphism $\varphi$ of $G_0^\omega \times \ldots \times G_n^\omega$ such that $\varphi^{-1} (w[K_n]) = w [K]$, for each group word $w$.

Define a compact set $K_\infty \subseteq \prod_n (G_0^\omega \times \ldots \times G_n^\omega)$ by 
\[
\xi \in K_\infty \iff (\forall n) (\xi (n) \in K_n)
\]
We will show that $\langle K_\infty \rangle$ is universal for compactly generated subgroups of $\prod_n G_n^\omega$.  Indeed, fix an arbitrary compactly generated subgroup $\langle K \rangle \subseteq \prod_n G_n^\omega$.  We may assume that $\bar{\bf 1} \in K$.  For each $n$, Lemma~\ref{loki} gives an endomorphism $\varphi_n$ of $G_0^\omega \times \ldots \times G_n^\omega$ such that
\begin{equation}\label{E3}
\varphi_n^{-1} (w [K_n]) = w[K \upto (n+1)] = w [K] \upto (n+1),
\end{equation}
for each group word $w$.  (Recall here that $K\upto (n+1) = \{ x \upto (n+1) : x \in K\} \subseteq G_0^\omega \times \ldots \times G_n^\omega$.)  

Define a continuous homomorphism $\varphi : \prod_n G_n^\omega \rightarrow \prod_n (G_0^\omega \times \ldots \times G_n^\omega)$ by 
\[
\varphi (x) (n) = \varphi_n (x \upto (n+1)),
\]
for each $n$.  The following claim will complete the proof.

\begin{claim}
$\varphi^{-1} (\langle K_\infty \rangle) = \langle K \rangle$.
\end{claim}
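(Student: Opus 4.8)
The plan is to establish the two inclusions $\langle K \rangle \subseteq \varphi^{-1}(\langle K_\infty\rangle)$ and $\varphi^{-1}(\langle K_\infty\rangle) \subseteq \langle K\rangle$ separately, relying throughout on the description $\langle A\rangle = \bigcup\{\, w[A] : w \text{ a group word}\,\}$ recalled in Section~\ref{S2}, and on the fact that the defining property \eqref{E3} of the maps $\varphi_n$ holds \emph{simultaneously for every group word} $w$.

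For the forward inclusion I would start with $x \in \langle K\rangle$ and write $x = w(x_1,\ldots,x_m)$ with $x_1,\ldots,x_m \in K$ for some group word $w$. Since $\varphi$ is a homomorphism, $\varphi(x) = w(\varphi(x_1),\ldots,\varphi(x_m))$, so it suffices to check that each $\varphi(x_i)$ lies in $K_\infty$. Applying \eqref{E3} with the identity word gives $\varphi_n^{-1}(K_n) = K\upto(n+1)$; since $x_i \in K$ we have $x_i\upto(n+1) \in K\upto(n+1)$, whence $\varphi(x_i)(n) = \varphi_n(x_i\upto(n+1)) \in K_n$ for every $n$. By the definition of $K_\infty$ this says exactly $\varphi(x_i) \in K_\infty$, so $\varphi(x) \in w[K_\infty] \subseteq \langle K_\infty\rangle$. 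This direction is routine and uses only the homomorphism property together with the $n$-th coordinate instance of \eqref{E3}.

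The reverse inclusion is where the real work lies. Suppose $\varphi(x) \in \langle K_\infty\rangle$, and fix a group word $w$ and $\xi_1,\ldots,\xi_m \in K_\infty$ with $\varphi(x) = w(\xi_1,\ldots,\xi_m)$. Reading this off in coordinate $n$ and using $\xi_i(n) \in K_n$, I get $\varphi(x)(n) = \varphi_n(x\upto(n+1)) = w(\xi_1(n),\ldots,\xi_m(n)) \in w[K_n]$, so by \eqref{E3} applied to this particular $w$ I obtain $x\upto(n+1) \in \varphi_n^{-1}(w[K_n]) = w[K]\upto(n+1)$. Thus for every $n$ there is some $z_n \in w[K]$ agreeing with $x$ on the first $n+1$ coordinates. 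The final step is to upgrade this initial-segment agreement to genuine membership: since $K$ is compact and $w$ induces a continuous map $(G^\omega)^m \to G^\omega$, the set $w[K]$ is compact, hence closed; the sequence $(z_n)$ converges to $x$ in the product topology, and any convergent subsequence has its limit in $w[K]$, forcing $x \in w[K] \subseteq \langle K\rangle$.

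I expect the main obstacle to be exactly this closing limiting argument, and in particular keeping clear \emph{why} the refined form of Lemma~\ref{loki}---a single $\varphi_n$ working for all words $w$ at once---is indispensable here: the word $w$ is produced only after we know $\varphi(x) \in \langle K_\infty\rangle$, so we cannot allow $\varphi_n$ to depend on $w$. The compactness of $w[K]$ (rather than merely of $K$) is what legitimizes the passage from $\{\, x\upto(n+1) \in w[K]\upto(n+1) : n \in \omega\,\}$ to $x \in w[K]$, and I would make sure to record that $w[K]$ is closed for precisely this reason.
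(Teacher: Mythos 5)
Your proof is correct and follows essentially the same route as the paper: the paper reduces the claim to showing $\varphi^{-1}(w[K_\infty]) = w[K]$ for every group word $w$ and establishes this by a chain of equivalences using \eqref{E3} together with the closedness of the compact set $w[K]$, which is exactly the content of your two inclusions (your reverse inclusion, passing from initial-segment agreement to membership via closedness of $w[K]$, is the paper's first ``$\iff$''). Your closing remark correctly isolates why Lemma~\ref{loki} must produce a single $\varphi_n$ working for all words simultaneously, since $w$ is only revealed after assuming $\varphi(x) \in \langle K_\infty \rangle$.
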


\proofclaim  It suffices to show that, for each group word $w$, 
\begin{equation}\label{E4}
\varphi^{-1} (w [K_\infty]) = w[K].
\end{equation}
Fix a group word $w$.  Armed with \eqref{E3} and the fact that $w[K]$ is compact, we have
\begin{align*}
x \in w [K]
&\iff (\forall n) (x \upto (n+1) \in w [K] \upto (n+1))\\
&\iff (\forall n) (\varphi_n (x \upto (n+1)) \in w [K_n])\\
&\iff (\forall n) (\varphi (x) (n) \in w [K_n])\\
&\iff \varphi (x) \in w[K_\infty].
\end{align*}
The third ``$\iff$'' follows from the definition of $\varphi (x) (n)$ as $\varphi_n (x \upto (n+1))$.  This completes the proof.
\end{proof}

\begin{remark*}
In the proof above, \eqref{E4} and the definition of $K_\infty$ imply that the statement of Lemma~\ref{loki} holds for $\prod_n G_n$, i.e., $\bar{\bf 1} \in K_\infty$ and, for each compact $K\subseteq \prod_n G_n$ containing $\bar{\bf 1}$, there is a continuous homomorphism $\varphi : \prod_n G_n \rightarrow \prod_n G_n$ with $\varphi^{-1} (w [K_\infty]) = w [K]$, for each group word $w$.
\end{remark*}

Considering the group word $w_0 (a) = a$ and noting that $\langle K \rangle = \bigcup_{w} w[K]$, we obtain the following corollary to the proof of Theorem~\ref{T1}(1).

\begin{corollary}\label{loki product}
Suppose $(G_n)_{n \in \omega}$ are as above.  There exists a compact set $K_0 \subseteq \prod_n G_n$ such that $\bar{\bf 1} \in K_0$ and for each compact $K \subseteq \prod_n G_n$ with $\bar{\bf 1} \in K$, there is a continuous group homomorphism $\varphi : \prod_n G_n \rightarrow \prod_n G_n$ such that
\[
\varphi^{-1} (K_0) = K \ \ \mbox{ and }  \ \ \varphi^{-1}(\langle K_0 \rangle) = \langle K \rangle .
\]
\end{corollary}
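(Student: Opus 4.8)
The plan is to read off both conclusions directly from the Remark immediately preceding the statement, which has already transferred the conclusion of Lemma~\ref{loki} from a single factor to the full product $\prod_n G_n$. Concretely, that Remark furnishes a compact set $K_0 \subseteq \prod_n G_n$ (namely $K_\infty$) with $\bar{\bf 1} \in K_0$, such that for every compact $K \subseteq \prod_n G_n$ containing $\bar{\bf 1}$ there is a continuous homomorphism $\varphi : \prod_n G_n \rightarrow \prod_n G_n$ satisfying $\varphi^{-1}(w[K_0]) = w[K]$ for every group word $w$. Both equations in the corollary will be obtained from this single family of identities.

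First I would fix such a $\varphi$ for the given $K$ and specialize to the identity word $w_0(a) = a$. Since $w_0[A] = A$ for any set $A$, the identity $\varphi^{-1}(w_0[K_0]) = w_0[K]$ reads $\varphi^{-1}(K_0) = K$, which is the first claimed equation.

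For the second equation I would invoke the description of the generated subgroup recorded in the Preliminaries, $\langle A \rangle = \bigcup_w w[A]$, where the union ranges over all group words $w$. Because taking preimages under a fixed map commutes with arbitrary unions, we compute
\[
\varphi^{-1}(\langle K_0 \rangle) = \varphi^{-1}\Bigl( \bigcup_w w[K_0] \Bigr) = \bigcup_w \varphi^{-1}(w[K_0]) = \bigcup_w w[K] = \langle K \rangle,
\]
using the Remark's identity term-by-term in the third step. This yields $\varphi^{-1}(\langle K_0 \rangle) = \langle K \rangle$, completing the argument.

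There is no real obstacle here: all the analytic work --- the construction of $K_0$ and of $\varphi$, and the verification that $\varphi^{-1}(w[K_0]) = w[K]$ for all $w$ --- was carried out in the proof of Theorem~\ref{T1}(1) and packaged in the preceding Remark. The only thing to notice is that the two seemingly separate conclusions of the corollary are both specializations of the uniform ``for every group word $w$'' statement: the first via the identity word, the second via the union characterization of $\langle \cdot \rangle$.
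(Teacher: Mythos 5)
Your proposal is correct and follows exactly the paper's own route: the paper derives this corollary from the Remark after Theorem~\ref{T1}(1) by ``considering the group word $w_0(a) = a$ and noting that $\langle K \rangle = \bigcup_w w[K]$,'' which is precisely your specialization to the identity word plus the union characterization of generated subgroups. Nothing is missing; the term-by-term use of $\varphi^{-1}(w[K_0]) = w[K]$ together with the fact that preimages commute with unions is all that is needed.
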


We will use this in the next proof.

\begin{proof}[Proof of Theorem~\ref{T1}(2)]

Fix a sequence $(G_n)_{n \in \omega}$ of locally compact Polish groups, as above.  For each $n$, let $D_n \subseteq G_n$ be a countable dense set, containing the identity element $\mathbf 1_n \in G_n$.  For each $n$, fix an enumeration $\{ x^n_0 , x^n_1 , \ldots \}$ of $D_n$, with $x^n_0 = \mathbf 1_n$, and fix a neighborhood $U_n \ni \mathbf 1_n$, with $\cl (U_n)$ compact.

For each $n$ and $x \in \prod_n G_n$, define $x^\ast \in \omega^\omega$ by 
\[
x^\ast (n) = \min \{ i : (x^n_i)^{-1} x(n) \in \cl (U_n) \},
\]
for each $n \in \omega$.  Define $u^\ast \in \omega^n$ analogously, for $u \in \prod_{i < n} G_i$.  Observe that, by the argument of Lemma~\ref{bounded}, if $K \subseteq \prod_n G_n$ is compact, then $\{ x^\ast : x \in K\}$ has compact closure in $\omega^\omega$.  Conversely, since each $\cl (U_n)$ is compact, it follows that 
\[
\{ x \in \prod_n G_n : x^\ast \leq \alpha\}
\]
is compact, for each $\alpha \in \omega^\omega$.

For notational reasons, we will consider the group 
\[
G' = \prod_{\substack {n \in \omega \\ s\in \omega^{<\omega}}} (G_0 \times \ldots \times G_{\len s -1}).
\]
Note that $n$ is a ``dummy'' index, serving only to produce infinitely many copies of the term inside the product.  For the sake of clarity, we remark that $\xi (n,s) \in G_0 \times \ldots \times G_{\len s -1}$, for each $n,s$ and $\xi \in G'$.

Since each $G_n$ is isomorphic to infinitely many other $G_m$, we have $G' \cong \prod_n G_n$.  To prove our theorem, it will therefore suffice to produce a $K_\sigma$ subgroup of $G'$ which is universal for $K_\sigma$ subgroups of $\prod_n G_n$.

Let $K_0 \subseteq \prod_n G_n$ be as in Corollary~\ref{loki product}.  For each $n$, define
\[
A_n = \{ \xi \in G' : (\forall n' \geq n) (\forall s \in \omega^{< \omega}) (\xi (n',s) \in K_0 \upto \len s)\}.
\]
For each $n$, the subgroup $\langle A_n \rangle$ is $F_\sigma$.  This follows from the fact that each $A_n$ is the direct product of a compact set with factors of the form $G_0 \times \ldots \times G_k$.  

Define the set
\[
\tilde A = \{ \xi \in G' : (\forall^\infty n,s) (\xi (n,s)^\ast \leq s)\}.
\]
It follows that $\tilde A$ is $K_\sigma$ and hence $\langle \tilde A \rangle$ is as well.  Let
\[
H_0 = \langle \tilde A \rangle \cap \bigcup_n \langle A_n \rangle
\]
and note that, since the term $\bigcup_n \langle A_n \rangle$ is an increasing union of subgroups, $H_0$ itself is a subgroup of $G'$.  As the intersection of an $F_\sigma$ set with a $K_\sigma$ set, $H_0$ is $K_\sigma$.  We will show that $H_0$ is universal for $K_\sigma$ subgroups of $\prod_n G_n$.  

Let $B = \bigcup_n B_n$ be an arbitrary $K_\sigma$ subgroup of $\prod_n G_n$, with each $B_n$ compact and $\bar{\bf 1} \in B_0 \subseteq B_1 \subseteq \ldots$.  Take continuous endomorphisms $\psi_n$ of $\prod_n G_n$ such that 
\[
\psi_n^{-1} (K_0) = B_n \ \ \mbox{ and } \ \  \psi_n^{-1}(\langle K_0 \rangle) = \langle B_n \rangle ,
\]
for each $n$.  Each $\psi_m (B_n)$ is compact.  As noted above, this implies that the closure of $\{ x^\ast : x \in \psi_m(B_n)\}$ is compact in $\omega^\omega$.  Thus we may choose $\alpha_n \in \omega^\omega$ such that each $\alpha_n$ is increasing, $\alpha_0 \leq \alpha_1 \leq \ldots$ and $x^\ast \leq \alpha_n$, for each $x \in \bigcup_{n' \leq n} \psi_{n'} (B_n)$.  Define $\psi : \prod_n G_n \rightarrow G'$ by 
\[
\psi(x) (n,s) = \begin{cases}
\psi_n (x) \upto p &\mbox{if } s = \alpha_{n+p} \upto p,\\
(\mathbf 1_0 , \ldots , \mathbf 1_{p-1}) \quad&\mbox{otherwise,}
\end{cases}
\]
for each $n \in \omega$ and $s \in \omega^{<\omega}$ with $p = \len s$.  It remains to show that $\psi^{-1} (H_0) = B$.

\begin{claim1}
If $\psi(x) \in H_0$, then $x \in B$.
\end{claim1}

\proofclaim  Let $n$ be such that $\psi (x) \in \langle A_n \rangle$, with $w$ a group word such that $\psi (x) \in w[A_n]$.  For each $p$, if $s = \alpha_{n+p} \upto p$, we have 
\begin{align*}
\psi_n (x) \upto p
&= \psi (x) (n,s)\\
&\in \{ \xi (n,s) : \xi \in w [A_n]\}\\
&= w [K_0 \upto p]\\
&= w [K_0] \upto p
\end{align*}
and hence $\psi_n (x) \in w[K_0]$, since the latter is closed.  (As the continuous image of a compact set, $w[K_0]$ is compact.)  This implies that $\psi_n(x) \in \langle K_0 \rangle$ and, since $\psi_n$ reduces $\langle B_n \rangle$ to $\langle K_0 \rangle$, we conclude that $x \in \langle B_n \rangle \subseteq B$.

\begin{claim2}
If $x \in B$, then $\psi (x) \in H_0$.
\end{claim2}

\proofclaim  Suppose that $x \in B$, say $x \in B_{n_0}$.  We first verify that $\psi(x) \in A_{n_0}$.  Fix $n\geq n_0$ and $s \in \omega^{<\omega}$, with $p = \len s$.  If $s \neq \alpha_{n+p} \upto p$, then $\psi (x) (n,s) = (\mathbf 1_0 , \ldots , \mathbf 1_{p-1}) \in K_0 \upto p$, since $\bar{\bf 1} \in K_0$.  On the other hand, if $s = \alpha_{n+p} \upto p$, then 
\[
\psi (x) (n,s) = \psi_n (x) \upto p \in K_0 \upto p,
\]
since $\psi_n (B_{n_0}) \subseteq \psi_n (B_n) \subseteq K_0$, by assumption.  As $n\geq n_0$ and $s$ were arbitrary, we see that $\psi (x) \in A_{n_0}$.

It remains to see that $\psi (x) \in \langle \tilde A\rangle$.  Naturally, it suffices to prove that $\psi(x) \in \tilde A$.  We must show that, for all but finitely many $n,s$, 
\begin{equation}\label{E5}
(\psi (x) (n,s))^\ast \leq s
\end{equation}
Fix $n,s$ with $p = \len s$.  If $s \neq \alpha_{n+p} \upto p$, then \eqref{E5} follows, since $\psi (x) (n,s)  = (\mathbf 1_0 , \ldots , \mathbf 1_{p-1})$ and $(\mathbf 1_0 , \ldots , \mathbf 1_{p-1})^\ast = 0^p$.  If $s= \alpha_{n+p} \upto p$ and $n +p \geq n_0$, then $(\psi_n (x))^\ast \leq \alpha_{n+p}$, since $x \in B_{n_0} \subseteq B_n$ and $n \leq n+p$.  Hence
\begin{align*}
(\psi (x) (n,s))^\ast
&= \psi_n(x)^\ast \upto p\\
&\leq \alpha_{n+p} \upto p\\
&= s
\end{align*}
and \eqref{E5} holds for $n,s$.  We see that \eqref{E5} only fails when $n+\len s <n_0$ and $s = \alpha_{n+\len s} \upto \len s$.  There are only finitely many such pairs $n,s$.  

We have shown that $\psi (x) \in \tilde A$ and hence $\psi (x) \in \tilde A \cap A_{n_0} \subseteq H_0$.  This completes the proof.
\end{proof}
%%%%%%%%%%%%%%%%%%%%%%%%%%%%%%%%%%%%%%

%%%%%%%%%%%%%%%%%%%%%%%%%%%%%%%%%%%%%%

\section{Universal $F_\sigma$ subgroups for $K_\sigma$}\label{S5}

Theorem~\ref{T1} gives a universal $K_\sigma$ subgroup of $G^\omega$ whenever $G^\omega$ is locally compact.  If $G$ is arbitrary, we can still prove that there is an $F_\sigma$ subgroup of $G^\omega$ which is universal for $K_\sigma$ subgroups of $G^\omega$ (Theorem~\ref{T2} from Section~\ref{S1}).  In Section~\ref{S6}, we will show that $S_\infty^\omega$ does not have a universal $K_\sigma$ subgroup, implying that this result cannot, in general, be improved.

We recall Theorem~\ref{T2} and give its proof below.

\begin{refthmT2}
For every Polish group $G$, there is an $F_\sigma$ subgroup $H \subseteq G^\omega$ which is universal for $K_\sigma$ subgroups of $G^\omega$.
\end{refthmT2}

Before proceeding, it is worth mentioning a corollary of Theorem~\ref{T2}.  Recall that a Polish group $\mathbb G$ is {\em universal} if every Polish group is isomorphic to a closed subgroup of $\mathbb G$.

\begin{corollary}\label{univ F subgroup}
If $\mathbb G$ is a universal Polish group, then there is an $F_\sigma$ subgroup $H_0 \subseteq \mathbb G$ such that, for any $K_\sigma$ subgroup $H$ of a Polish group $G$, there is a continuous injective group homomorphism $\varphi : G \rightarrow \mathbb G$ such that $H = \varphi^{-1} (H_0)$.
\end{corollary}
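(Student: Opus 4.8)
The plan is to derive the corollary from Theorem~\ref{T2} applied to $\mathbb G$ itself, together with the universality of $\mathbb G$; the only genuinely new points are that we must produce a \emph{single} $F_\sigma$ subgroup $H_0$ working for all $H$, and that every reducing homomorphism must be made \emph{injective}.

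First I would apply Theorem~\ref{T2} to the Polish group $\mathbb G$ to obtain an $F_\sigma$ subgroup $H^\ast \subseteq \mathbb G^\omega$ universal for $K_\sigma$ subgroups of $\mathbb G^\omega$. Identifying $\mathbb G^\omega$ with $\mathbb G^\omega \times \mathbb G^\omega$ via a fixed isomorphism, set $\hat H = H^\ast \times \mathbb G^\omega$; since $H^\ast$ is a countable union of closed sets, $\hat H$ is an $F_\sigma$ subgroup. Because $\mathbb G$ is universal and $\mathbb G^\omega \times \mathbb G^\omega \cong \mathbb G^\omega$ is Polish, fix an isomorphism $e : \mathbb G^\omega \times \mathbb G^\omega \to \mathbb G$ onto a closed subgroup and define $H_0 = e(\hat H)$. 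As the image of an $F_\sigma$ set under a homeomorphism onto a closed subset of $\mathbb G$, the set $H_0$ is $F_\sigma$ in $\mathbb G$, and it is a subgroup since $e$ is a homomorphism. This $H_0$, built once and for all, will be the desired fixed subgroup.

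Now I would fix an arbitrary $K_\sigma$ subgroup $H$ of a Polish group $G$. Using universality of $\mathbb G$, choose an isomorphism $j : G \to \mathbb G$ onto a closed subgroup, and let $\iota : \mathbb G \to \mathbb G^\omega$ be the first-coordinate embedding $x \mapsto (x,\mathbf 1,\mathbf 1,\ldots)$. Their composite $\theta = \iota \circ j : G \to \mathbb G^\omega$ is an injective continuous homomorphism, and $\theta(H)$ is a $K_\sigma$ subgroup of $\mathbb G^\omega$ (the continuous images of the compact pieces of $H$). By universality of $H^\ast$ there is a continuous homomorphism $\phi : \mathbb G^\omega \to \mathbb G^\omega$ with $\phi^{-1}(H^\ast) = \theta(H)$.

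The main obstacle is that $\phi$ need not be injective, so the naive composite $e \circ \phi \circ \theta$ may fail to be one-to-one. To repair this I would use a doubling trick: define $\Phi : G \to \mathbb G^\omega \times \mathbb G^\omega$ by $\Phi(x) = (\phi(\theta(x)),\,\theta(x))$, where the first coordinate carries the reduction and the second records the input. Then $\Phi$ is a continuous homomorphism, injective because $\theta$ is, and for each $x \in G$ one has $\Phi(x) \in \hat H$ iff $\phi(\theta(x)) \in H^\ast$ iff $\theta(x) \in \theta(H)$ iff $x \in H$, the last equivalence using injectivity of $\theta$; hence $\Phi^{-1}(\hat H) = H$. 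Finally set $\varphi = e \circ \Phi : G \to \mathbb G$. Being a composite of injective continuous homomorphisms, $\varphi$ is an injective continuous homomorphism, and since $e$ is injective we get $\varphi^{-1}(H_0) = \Phi^{-1}(e^{-1}(e(\hat H))) = \Phi^{-1}(\hat H) = H$, as required.
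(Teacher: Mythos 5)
Your proof is correct, and it diverges from the paper's at exactly the one nontrivial point of the corollary: how to make the reducing homomorphism injective. The paper takes $H_0$ to be (the image in $\mathbb G$ of) the universal subgroup $\tilde H_0 \subseteq \mathbb G^\omega$ from Theorem~\ref{T2} itself, and obtains injectivity by \emph{inspecting the proof} of Theorem~\ref{T2}: the reduction $\psi$ constructed there copies the input $x$ verbatim into infinitely many coordinates, so it is automatically injective, and composing it with the closed embeddings $G \rightarrow \mathbb G \rightarrow \mathbb G^\omega$ and $\mathbb G^\omega \rightarrow \mathbb G$ preserves this. You instead treat Theorem~\ref{T2} as a black box and restore injectivity by the doubling (graph) trick, pairing the possibly non-injective reduction $\phi \circ \theta$ with the injective embedding $\theta$ in a second coordinate; the price is that your universal subgroup is $e(H^\ast \times \mathbb G^\omega)$ rather than $e(\tilde H_0)$, i.e., you must enlarge the target subgroup by a full factor $\mathbb G^\omega$ to absorb the recording coordinate. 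What each approach buys: the paper's version keeps $H_0$ as the direct image of the Theorem~\ref{T2} subgroup but depends on information not contained in the statement of that theorem, whereas your version proves the stronger, portable fact that \emph{any} $F_\sigma$ subgroup universal for $K_\sigma$ subgroups of $\mathbb G^\omega$ can be upgraded to one admitting injective reductions, with no knowledge of how the original reductions are built.
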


\begin{proof}
Let $\mathbb G$ be a universal Polish group and $\tilde H_0 \subseteq \mathbb G^\omega$ an $F_\sigma$ subgroup which universal for $K_\sigma$ subgroups of $\mathbb G^\omega$.  By the universality of $\mathbb G$, we may identify $\tilde H_0$ with an $F_\sigma$ subgroup $H_0 \subseteq \mathbb G$.  Observe that, since $\mathbb G$ itself is isomorphic to a closed subgroup of $\mathbb G^\omega$, $H_0$ is universal for $K_\sigma$ subgroups of $\mathbb G$.  

Fix any Polish group $G$ and $H \subseteq G$, a $K_\sigma$ subgroup.  Let $\pi : G \rightarrow \mathbb G$ be an isomorphic embedding.  Note that $\pi (H)$ is a $K_\sigma$ subgroup of $\mathbb G$ and hence there is a continuous homomorpism $\varphi : \mathbb G \rightarrow \mathbb G$ such that $\varphi^{-1} (H_0) = \pi (H)$.  Inspecting the proof of Theorem~\ref{T2} below, it will become apparent that $\varphi$ can be chosen to be injective.  Since $\pi$ is injective, it follows that $(\varphi \circ \pi)^{-1} (H_0) = H$.
\end{proof}

It is a theorem of V. V. Uspenski\u\i ~(see Kechris \cite[9.18]{KECHRIS.dst}) that there are universal Polish groups.  In particular, the homeomorphism group of the Hilbert cube is a universal Polish group.

\medskip

\noindent{\it Notation.}  For the sake of the next proof, we introduce some notation.  If $w$ is a group word, let $w^{-1}$ denote the shortest group word such that $(w(a_0 , \ldots , a_m))^{-1} = w^{-1} (a_0 , \ldots , a_m)$, for any group elements $a_0 , \ldots , a_m$.  (For example, if $w (a,b) = ab$, then $w^{-1} (a,b) = b^{-1} a^{-1}$.)  If $w_1 , w_2$ are group words, let $w_1 w_2$ denote the concatenation of $w_1$ and $w_2$, i.e., for any group elements $a_0 , \ldots , a_m$ and $b_0 , \ldots , b_n$, 
\[
w_1 w_2 (a_0 , \ldots , a_m , b_0 , \ldots , b_n) = w_1 (a_0 , \ldots , a_m) \cdot w_2 (b_0 , \ldots , b_n).
\]

\begin{proof}[Proof of Theorem~\ref{T2}]
Fix a Polish group $G$, with compatible metric $d$ and identity element $\mathbf 1$.  For $x \in G$ and $A \subseteq G$, let $\dist (x , A) = \inf \{ d (x,y) : y \in A\}$.  Let $\mathcal B$ be a countable topological basis for $G$ and let $\mathcal F$ denote the set of all finite families $F \subseteq \mathcal B$ such that $\mathbf 1 \in \bigcup F$.

Since our objective is to produce a subgroup of $G^\omega$ with the desired universality property, we will simplify notation by assuming that $G$ is itself a countable power (i.e., $G\cong G^\omega$) and show that there is an $F_\sigma$ subgroup of $G^\omega$ of which every $K_\sigma$ subgroup of $G$ is a continuous homomorphic pre-image.  Specifically, we will work with an isomorphic copy of $G^\omega$ in the form of $G^{\mathcal F \times \omega}$.

Define an $F_\sigma$ subset $H$ of $G^{\mathcal F \times \omega}$ by letting $\xi \in H$ if, and only if, there exist an $n \in \omega$ and an $(m+1)$-ary group word $w$ such that, for each $F \in \mathcal F$ and $n' \geq n$, 
\[
\xi (F,n') \in \cl ( \{ w(x_0 , \ldots , x_m) : x_0 , \ldots , x_m \in \bigcup F \} ) .
\]
The next two claims will complete the proof.

\begin{claim1}
$H$ is a subgroup of $G^{\mathcal F \times \omega}$.
\end{claim1}

\proofclaim  Let $\bar{\mathbf 1}$ denote the identity element of $G^{\mathcal F \times \omega}$.  We have $\bar{\mathbf 1} \in H$, witnessed by $n=0$ and the word $w(a) = a$, since $\mathbf 1 \in \bigcup F$, for each $F \in \mathcal F$.  

Closure under taking inverses follows from from the observation that if $n$ and $w$ witness $\xi \in H$, then $n$ and $w^{-1}$ will witness $\xi^{-1} \in H$.

Suppose that $\xi_1 , \xi_2 \in H$.  Let $n_1 , n_2 \in \omega$ and $w_1 , w_2$ be group words as in the definition of $H$, witnessing the membership of $\xi_1$ and $\xi_2$ in $H$, respectively.  Let $n = \max \{ n_1 , n_2\}$ and $w = w_1 w_2$.  It follows from the definition of $H$ that the pair $n,w$ witnesses the membership of the product $\xi_1 \xi_2$ in $H$.

\begin{claim2}
Every $K_\sigma$ subgroup of $G$ is a continuous homomorphic pre-image of $H$.
\end{claim2}

\proofclaim  Fix a $K_\sigma$ subgroup $\bigcup_n K_n$ of $G$, with each $K_n$ compact and $\mathbf 1 \in K_0 \subseteq K_1 \subseteq \ldots$.  For each $n,k$, let $F_{n,k} \in \mathcal F$ be a finite $\frac{1}{k}$-cover of $K_n$ (i.e., each $U \in F_{n,k}$ has diameter less than $1/k$ and $K_n \subseteq \bigcup F_{n,k}$) with the property that $U \cap K_n \neq \emptyset$, for each $U \in F_{n,k}$.  Define a continuous homomorphism $\psi : G \rightarrow G^{\mathcal F \times \omega}$ by 
\[
\psi (x) (F,n) = \begin{cases}
x &\mbox{if } (\exists k) (F = F_{n,k}),\\
\mathbf 1 &\mbox{otherwise}.
\end{cases}
\]

We wish to see that $\bigcup_n K_n = \psi^{-1} (H)$.  Suppose first that $x \in K_n$ and hence $x \in K_{n'}$, for each $n' \geq n$.  If $w$ is the group word $w(a) = a$, then $n$ and $w$ witness $\psi (x) \in H$.  Indeed, if $n' \geq n$ and, for some $k$, we have $F = F_{n,k}$, then $\psi (x) (F,n') = x \in K_{n'} \subseteq \bigcup F_{n',k}$.  On the other hand, if, for every $k$, we have $F \neq F_{n',k}$, then $\psi (x) (F,n) = \mathbf 1 \in \bigcup F$, since $F \in \mathcal F$.

Suppose now that $\psi (x) \in H$, witnessed by $n,w$, where $w$ is $(m+1)$-ary.  This implies that, for each $k \in \omega$, 
\[
x = \psi (x) (F_{n,k},n) \in \cl (\{ w (x_0 , \ldots , x_m) : x_0 , \ldots , x_m \in \bigcup F_{n,k}\}).
\]
Hence, by the choice of $F_{n,k}$, there exist $x^k_0 , \ldots , x^k_m \in G$ such that, for each $k \in \omega$, 
\begin{enumerate}
\item $d(x , w(x^k_0 , \ldots , x^k_m)) \leq 1/k$ and
\item $\dist (x^k_i , K_n) \leq 1/k$, for each $i \leq m$.
\end{enumerate}
Since $K_n$ is compact, condition 2 implies that there are elements $y_0 , \ldots , y_m \in K_n$ and a subsequence $k_0 < k_1 < \ldots$ such that, for each $i \leq m$, 
\[
x^{k_p}_i \rightarrow y_i \mbox{ as } p \rightarrow \infty.
\]
More explicitly, for each pair $k,i$, there is an element $z^k_i \in K_n$ with 
\[
d(x^k_i , z^k_i) \leq 1/k.
\]
Hence one may iteratively (for each $i \leq m$) take convergent subsequences of $(z^k_i)_{k \in \omega}$ to obtain a common subsequence $k_0 < k_1 < \ldots$ such that $(z^{k_p}_i)_{p \in \omega}$ converges, for each $i \leq m$.  Letting $y_i = \lim_p z^{k_p}_i$, we have $y_i \in K_n$ and $y_i = \lim_p x^{k_p}_i$.  

It remains only to show that $x = w(y_0 , \ldots , y_m)$ and thereby conclude that $x \in \langle K_n \rangle \subseteq H$.  To this end, fix $\varepsilon > 0$.  Let $p_0$ be such that if $p \geq p_0$, we have
\begin{enumerate}
\item $1/k_p < \varepsilon /2$ and
\item $d (w (x^{k_p}_0 , \ldots , x^{k_p}_m) , w (y_0 ,\ldots , y_m)) < \varepsilon /2$.
\end{enumerate}
Such a $p_0$ exists satisfying the second condition since $w$ determines a continuous map $G^m \rightarrow G$ and each sequence $(x^{k_p}_i)_{p \in \omega}$ converges to $y_i$.  For each $p \geq p_0$, we now have 
\begin{align*}
d ( x , w (y_0 ,\ldots , y_m))
&\leq d ( x , w (x^{k_p}_0 , \ldots , x^{k_p}_m) ) + d ( w (x^{k_p}_0 , \ldots , x^{k_p}_m) , w (y_0 , \ldots , y_m) )\\
&< 1/k_p + \varepsilon / 2\\
&< \varepsilon.
\end{align*}
Since $\varepsilon$ was arbitrary, we have shown that $x = w (y_0 , \ldots , y_m)$.  This completes the claim and concludes the proof.
\end{proof}

%%%%%%%%%%%%%%%%%%%%%%%%%%%%%%%%%%%%%%

%%%%%%%%%%%%%%%%%%%%%%%%%%%%%%%%%%%%%%

\section{The example of $S_\infty$}\label{S6}

In this section we prove that there is a countable power with no universal $K_\sigma$ subgroup.  In particular, we show that $S_\infty^\omega$ has no universal $K_\sigma$ subgroup (Theorem~\ref{T3}).  This suggests that Theorem~\ref{T1} cannot readily be expanded to a larger class of Polish groups.  In some sense, the example of $S_\infty$ also serves as a complement to Theorem~\ref{T2}, again suggesting that this may be a ``best possible'' result.

We state the main result of this section.

\begin{theorem}\label{non loc cpt example}
Their is no $K_\sigma$ subgroup of $S_\infty^\omega$ which is universal for compactly generated subgroups of $S_\infty^\omega$.
\end{theorem}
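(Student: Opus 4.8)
The plan is to argue by contradiction. Suppose $H=\bigcup_k C_k$ is a $K_\sigma$ subgroup of $S_\infty^\omega$, with each $C_k$ compact and $\bar{\mathbf 1}\in C_0\subseteq C_1\subseteq\cdots$, and suppose $H$ is universal for compactly generated subgroups. To refute universality it is enough to produce compactly generated subgroups that cannot all be group-homomorphism reduced to $H$, so I would work only with the most rigid ones, namely the \emph{compact} subgroups $P\le S_\infty^\omega$ (each is compactly generated, with $\langle P\rangle=P$). The first ingredient is the standard description of compactness in $S_\infty^\omega$: a closed set $K$ is compact exactly when it is orbit-bounded, i.e.\ for each coordinate $m$ and each point $n$ the sets $\{\xi(m)(n):\xi\in K\}$ and $\{\xi(m)^{-1}(n):\xi\in K\}$ are finite. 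Thus each $C_k$ carries a bound $\beta_k\in\omega^{\omega\times\omega}$ with $\max(\xi(m)(n),\xi(m)^{-1}(n))\le\beta_k(m,n)$ for all $\xi\in C_k$, and since the $C_k$ increase, every element of $H$ lies in one single such box.

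The second ingredient pins down the reducing maps. If $\varphi:S_\infty^\omega\to S_\infty^\omega$ is continuous with $\varphi^{-1}(H)=P$ for a compact subgroup $P$, then $\ker\varphi=\varphi^{-1}(\{\bar{\mathbf 1}\})\subseteq\varphi^{-1}(H)=P$. I would then invoke the classification of closed normal subgroups of $S_\infty^\omega$ as the coordinate sub-products $\prod_{m\in M}S_\infty$ (a consequence of the topological simplicity and non-commutativity of $S_\infty$), together with the fact that $S_\infty$ is not compact, so no nontrivial such sub-product can lie inside the compact group $P$. This forces $\ker\varphi=\{\bar{\mathbf 1}\}$, whence every $\varphi$ reducing a compact subgroup to $H$ is injective and restricts to a topological isomorphism of $P$ onto the compact subgroup $\varphi(P)\subseteq H$.

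Next I would feed the compact image into the box structure. Applying the Baire category theorem inside the compact group $P$ to the closed cover $P=\bigcup_k\bigl(P\cap\varphi^{-1}(C_k)\bigr)$, one piece has nonempty interior and hence contains a coset $gN$ of an open (finite-index) subgroup $N\le P$; translating, $\varphi(N)\subseteq\varphi(g)^{-1}C_k$, an orbit-bounded compact subgroup. I would build the test family to defeat this uniformly: for strictly increasing $\vec s=(s_i)\in\omega^\omega$, let $P_{\vec s}=\prod_i\mathrm{Sym}(B_i)$ be the block group on a partition of the points of a single coordinate into consecutive blocks $B_i$ with $\len{B_i}=s_i$. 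Each $P_{\vec s}$ is a compact subgroup, and every open finite-index subgroup contains a tail product $\prod_{i>j}\mathrm{Sym}(B_i)$, in which the block sizes still tend to infinity. Since there are uncountably many $\vec s$ but only countably many $C_k$, a pigeonhole argument yields a single index $k$ such that, up to a translation by $\varphi(g_{\vec s})$, the fixed set $C_k$ must absorb an injective continuous orbit-bounded copy of such a tail product, for uncountably many and hence arbitrarily fast-growing $\vec s$.

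The heart of the matter, and the step I expect to be the main obstacle, is converting this into a genuine contradiction, for mere abstract containment is insufficient: a single coordinate of a box whose bound tends to infinity can host symmetric groups of every finite degree on sufficiently far-out orbits, and the translations $\varphi(g_{\vec s})^{-1}C_k$ vary with $\vec s$. The leverage must come from two further constraints that I have not yet used quantitatively: first, continuity of $\varphi_{\vec s}$ at $\bar{\mathbf 1}$, which forces the location of the large orbits of $\varphi_{\vec s}(P_{\vec s})$ to be controlled by how deep the support of $P_{\vec s}$ is allowed to sit (fix a basic neighborhood $V$ of $\bar{\mathbf 1}$ in the target and pull back to an open $U$ with $\varphi_{\vec s}(U)\subseteq V$); and second, the requirement that $\varphi_{\vec s}^{-1}(H)$ equal $P_{\vec s}$ \emph{exactly}, so that every element outside $P_{\vec s}$ is mapped outside every $C_k$. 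I would make this precise by tracking the mismatch between the growth rate $s_i$ that is forced into $C_k$ and the rate that $C_k$ (and its translate) can support at the coordinates and points where $U$ permits motion, and then choose $\vec s$ diagonally against the enumeration $(\beta_k)$ to outgrow every admissible rate. Establishing this distortion estimate for continuous injective endomorphisms of $S_\infty^\omega$ is where the real work lies.
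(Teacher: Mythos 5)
Your preparatory steps are sound and in fact run parallel to the paper's own: the injectivity of reducing maps (the paper's Lemma~\ref{injecto}, via the Schreier--Ulam theorem; your product version requires classifying the closed normal subgroups of $S_\infty^\omega$ as coordinate sub-products, which is provable along the lines you indicate), and the Baire-category localization of the image of a compact group into a single compact piece (the paper's Lemma~\ref{single beta}). One of your worries even evaporates: since $\varphi(g_{\vec s})\in C_k$, the translate $\varphi(g_{\vec s})^{-1}C_k$ is contained in the fixed compact set $C_k^{-1}C_k$, so the varying translations are harmless. (A smaller slip: ``uncountably many $\vec s$, hence arbitrarily fast-growing'' is false --- there are uncountably many strictly increasing sequences dominated by a single one --- but diagonalizing against $(\beta_k)$, as you later suggest, repairs this.) The problem is that your proof stops exactly where the theorem lives: you have no mechanism that converts ``a tail of $P_{\vec s}$ lands homeomorphically in a fixed compact set'' into a contradiction, and you say so yourself.

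This gap is not a technicality; as set up, the plan cannot be completed, because the only information you propose to extract from the reductions is that the compact groups $P_{\vec s}$ (or tails of them) embed as topological groups into a fixed compact set. That is never contradictory: the single compact subgroup $Q=\prod_j \mathrm{Sym}(D_j)\leq S_\infty$, where the $D_j$ are disjoint consecutive blocks with $\lvert D_j\rvert=j$, contains a closed isomorphic copy of every $P_{\vec s}$ (send the $i$th factor $\mathrm{Sym}(B_i)$ into the factor $\mathrm{Sym}(D_{s_i})$), so all of your test groups embed simultaneously into one compact --- a fortiori $K_\sigma$ --- subgroup, no matter how fast $\vec s$ grows. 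Consequently any correct argument must exploit elements \emph{outside} the test subgroup, i.e., the exactness $\varphi^{-1}(H)=P_{\vec s}$ and the fact that $\varphi$ is globally defined, together with an invariant that homomorphisms preserve but that compact boxes distort in a location-sensitive way. This is precisely what the paper's Lemma~\ref{big alpha} supplies, and it is why the paper's test object is a compact \emph{set} $K_\alpha$ generating a non-compact subgroup, rather than a compact group: $K_\alpha$ contains both (i) long \emph{chains of square roots} $f_0,\dots,f_p$ (with $f_j^2=f_{j-1}$) above products of disjoint $2$-cycles --- an invariant preserved by injective homomorphisms and bounded inside any $K_\beta$ by $\beta$ evaluated at a support point, because successive roots move such a point to pairwise distinct places --- and (ii) the spread-out transpositions $[j,j+1]$, which enter an order-$3$/order-$4$ argument forcing the image of the $2$-cycle product to have a support point $b_1\leq\beta^{4k}(a)$ at a controlled location, so that the bound in (i) actually bites. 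Choosing $\alpha$ so that the chains in $K_\alpha$ are longer than $\max_{i\leq k}\beta_i^{4k+1}(k)$ then yields the contradiction. Some invariant of this kind --- sensitive to \emph{where} the bound $\beta$ is evaluated, not merely to the abstract isomorphism type of the subgroup --- is the missing heart of your proposal.
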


This theorem shows (in a strong way) that $S_\infty^\omega$ has neither universal compactly generated nor universal $K_\sigma$ subgroups.  Since $S_\infty^\omega$ embeds in $S_\infty$ as a closed subgroup and {\it vice versa}, it follows from Proposition~\ref{P2} that it will suffice to prove Theorem~\ref{non loc cpt example} for $S_\infty$, in place of $S_\infty^\omega$.

Recall that the topology on $S_\infty$ is generated by the basic clopen sets 
\[
\mathcal U (s) = \{ s \in S_\infty : s \subset f\},
\]
where $s : \omega \rightarrow \omega$ is a finite injection.  Hence the sets $\mathcal U ({\rm id} \upto n)$ form a neighborhood basis at the identity.  Becuase we will refer to these open sets several times in what follows, we write $\mathcal U_n$ for $\mathcal U ({\rm id} \upto n)$.

The fundamental elements of $S_\infty$ are cycles.  We use the notation $[a_1 , \ldots , a_n]$ for $n$-cycles and $[\ldots , a_{-1} , a_0 , a_1 , \ldots]$ for $\infty$-cycles.  For $\pi \in S_\infty$, we let 
\[
\supp (\pi) = \{ n : \pi (n) \neq n\} = \{ n : \pi^{-1}(n) \neq n\}.
\]

For any $f \in \omega^\omega$ (viewed as a function $\omega \rightarrow \omega$) we write $f^p$ for the $p$-fold composite of $f$ with itself, e.g., $f^2 = f \circ f$.  We will use this notation both for permutations of $\omega$ as well as arbitrary functions on $\omega$.

For each $\alpha \in \omega^\omega$, define 
\[
K_\alpha = \{ f \in S_\infty : f,f^{-1} \leq \alpha\},
\]
where ``$f\leq \alpha$'' signifies that, for each $n \in \omega$, $f(n) \leq \alpha(n)$.  Note that each $K_\alpha$ is compact in $S_\infty$ and that every compact subset of $S_\infty$ is contained in some $K_\alpha$.  Suppose that $H = \bigcup_n K_n$ is a $K_\sigma$ subset of $S_\infty$, with each $K_n$ compact.  To show that $H$ is not universal for compactly generated subgroups of $S_\infty$, it will suffice to find a compact set $K \subseteq S_\infty$ such that no homomorphic image of $K$ is contained in $H$.  For this, it is enough to assume that each $K_n$ has the form $K_{\beta_n}$, for some $\beta_n \in \omega^\omega$.  We therefore show

\begin{theorem}\label{no univ K sigma}
Given $\beta_0 , \beta_1 , \ldots \in \omega^\omega$, there exists $\alpha \in \omega^\omega$ such that, for each continuous injective group homomorphism $\Phi : S_\infty \rightarrow S_\infty$, we have $\Phi (K_\alpha) \nsubseteq \bigcup_n K_{\beta_n}$.
\end{theorem}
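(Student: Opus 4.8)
The plan is to fix, once and for all, a fast-growing $\alpha$ depending only on the sequence $(\beta_n)$, and then, given an arbitrary continuous injective $\Phi$, to exhibit a single $f\in K_\alpha$ whose image $\Phi(f)$ outgrows every $\beta_n$. After the harmless reductions of replacing each $\beta_n$ by $\max_{i\le n}\beta_i$ and each value $\beta_n(m)$ by $\max_{i\le m}\beta_n(i)$, so that the $\beta_n$ are nondecreasing in both $n$ and $m$, I would take $\alpha$ to dominate the family diagonally, e.g.\ $\alpha(k)>\max_{n\le k}\beta_n(k)$ together with $\alpha$ increasing very rapidly. This choice serves two purposes: it makes $K_\alpha$ itself escape $\bigcup_n K_{\beta_n}$ (so the identity map is handled outright), and, more importantly, it guarantees that at high coordinates $K_\alpha$ contains permutations of large displacement, which is what is needed to defeat ``compressing'' homomorphisms.

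Two structural facts about $\Phi$ drive the argument. First, continuity means that each target coordinate depends on only finitely many source coordinates: for every $m$ there is $a_m<\infty$ with $\Phi(g)(m)=m$ whenever $g\in\mathcal U_{a_m}$; equivalently $\Phi$ carries $\mathcal U_M$ into $\mathcal U_N$ once $M\ge\max_{m<N}a_m$. Second, I would exploit the interaction between injectivity and the simplicity of the finitary alternating group. If $\Gamma\le S_\infty$ satisfies $\Gamma\subseteq K_\gamma$ for some $\gamma$, then every $\Gamma$-orbit is finite (the orbit of $m$ lies in $\{0,\dots,\gamma(m)\}$), so $\Gamma$ embeds into $\prod_j\mathrm{Sym}(O_j)$ over its finite orbits $O_j$; an infinite simple subgroup would then inject, through a single coordinate projection, into one finite $\mathrm{Sym}(O_j)$, which is impossible. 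Applied to $\Gamma=\Phi(A_{\mathrm{fin}})$, where $A_{\mathrm{fin}}$ is the infinite simple finitary alternating group, this shows $\Phi(A_{\mathrm{fin}})$ lies in no $K_\gamma$; the same reasoning shows it must possess an \emph{infinite} orbit $O$.

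Given $O$, the existence half of the construction is forced: for each $n$ there is $\gamma\in A_{\mathrm{fin}}$ and $m\in O$ with $\Phi(\gamma)(m)>\beta_n(m)$, for otherwise $\Phi(A_{\mathrm{fin}})\subseteq K_{\beta_n}$ (using that $A_{\mathrm{fin}}$ is closed under inverses) and $O$ would be finite. Since the finitary alternating group on any infinite subset of $\omega$ is again infinite and simple, I would run this separately over a partition of $\omega$ into infinitely many infinite pieces, producing finitely supported $\gamma_n$ on pairwise disjoint supports, each forcing displacement past $\beta_n$ at a point $m_n$, with the $m_n$ distinct and—after thinning via the finite-dependence fact—the supports $\supp\Phi(\gamma_n)$ pairwise disjoint and tending to infinity. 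The escaping element is then the convergent commuting product $f=\prod_n\gamma_n$: its image $\Phi(f)=\prod_n\Phi(\gamma_n)$ satisfies $\Phi(f)(m_n)=\Phi(\gamma_n)(m_n)>\beta_n(m_n)$ for every $n$, so $\Phi(f)\notin K_{\beta_n}$ for all $n$.

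The main obstacle—and where the choice of $\alpha$ together with the finite-dependence fact must be used quantitatively—is to arrange simultaneously that $f\in K_\alpha$, i.e.\ that the $\gamma_n$ have source displacement bounded by the \emph{single} pre-chosen $\alpha$, while their images displace points beyond $\beta_n$. The tension is that a homomorphism could a priori compress large source displacement into small target displacement, and, dually, to move a fixed low target point far one must move its (low) dependence set far, which $K_\alpha$ forbids at low coordinates. I would resolve this by always operating high up: rather than displacing a fixed point of $O$, one selects the relevant $m_n$ and the perturbation $\gamma_n$ at large coordinates, where the rapid growth of $\alpha$ permits $\gamma_n$ to move the (necessarily high) dependence coordinates of $m_n$ substantially while staying in $K_\alpha$, and where continuity keeps $\supp\Phi(\gamma_n)$ high and hence disjoint from earlier stages. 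Establishing that the infinite orbit continues to supply such high, $\alpha$-bounded perturbations beating each $\beta_n$, uniformly over all $\Phi$, is the technical heart of the proof.
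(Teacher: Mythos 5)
Your structural observations are correct as far as they go: continuity does localize dependence on coordinates; $\Phi(A_{\mathrm{fin}})$ is infinite and simple (injectivity plus simplicity of the finitary alternating group), hence lies in no $K_\gamma$ and has an infinite orbit; and a diagonal choice of $\alpha$ disposes of the identity map. But there is a genuine gap, and you have located it yourself: the step you defer in your final paragraph as ``the technical heart'' is the entire content of the theorem. The witnesses your orbit argument supplies come with no bound whatsoever on their own displacement --- ``there exist $\gamma \in A_{\mathrm{fin}}$ and $m \in O$ with $\Phi(\gamma)(m) > \beta_n(m)$'' says nothing about whether such a $\gamma$ can be found inside $K_\alpha$, let alone supported above a prescribed level, for an $\alpha$ fixed \emph{before} $\Phi$ is given. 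Since a homomorphism can compress displacement (your own observation), nothing in the proposal transfers largeness from source to image; the simplicity/orbit argument produces the wrong kind of witness. There is also a secondary gap: the finite-dependence fact only pushes $\min \supp \Phi(\gamma_n)$ to infinity; it cannot make the sets $\supp \Phi(\gamma_n)$ pairwise disjoint. Indeed $\Phi(\gamma_n)$ need not have finite support even though $\gamma_n$ does (consider the diagonal embedding $g \mapsto (g,g,\ldots)$ of $S_\infty$ into $S_\infty$ via a partition of $\omega$ into infinitely many copies of $\omega$), so the identity $\Phi(f)(m_n) = \Phi(\gamma_n)(m_n)$ needs an argument that the proposal does not contain.

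For contrast, the paper's proof is built exactly around supplying the missing transfer mechanism. First, after closing $\{\beta_m\}$ under composition, a Baire category argument applied to the compact set $\Phi(K_\alpha)$ yields a single pair $n,m$ with $\Phi(\mathcal U_n \cap K_\alpha) \subseteq K_{\beta_m}$ (using a finite-extension lemma and the trick that $\pi^{n!} = \id$ for a finite-support $\pi$, to upgrade a basic open set $\mathcal U(\pi)$ to $\mathcal U_n$). Second --- the heart --- it uses a homomorphism-invariant quantity: the length of a chain of square roots $f_0 = f$, $f_j^2 = f_{j-1}$. For a product of disjoint $2$-cycles, any chain of roots inside $K_\beta$ has length at most $\beta(b)$ for $b$ any point of its support, since iterated roots move a support point to ever-new values; and the order-$3$ versus order-$4$ computation with $\Phi([n,n+1,k,k+1]\,[k+2,k+3]\cdots)$ forces the support of $\Phi([k,k+1]\,[k+2,k+3]\cdots)$ to contain a point below $\beta^{4k+1}(a)$, making this bound quantitative. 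Meanwhile $\alpha$ is chosen in advance so that $[k,k+1]\,[k+2,k+3]\cdots$ has chains of roots in $K_\alpha$ longer than $\max_{i\leq k}\beta_i^{4k+1}(k)$. Chains of roots are preserved by every homomorphism, so largeness passes from source to image no matter how $\Phi$ tries to compress displacements --- precisely the mechanism your construction of the $\gamma_n$ lacks. Without such an invariant (or a substitute for it), the proposal does not prove the theorem.
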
 

We require a few lemmas.

\begin{lemma}\label{injecto}
If $\Phi$ is a continuous endomorphism of $S_\infty$ and $\ker (\Phi) \neq S_\infty$, then $\Phi$ is injective.
\end{lemma}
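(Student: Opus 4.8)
The plan is to exploit two facts: that the kernel of a continuous homomorphism is closed, and that $S_\infty$ has extremely few closed normal subgroups. First I would note that, since $\Phi$ is continuous and the singleton $\{\id\}$ is closed in the Hausdorff group $S_\infty$, the kernel $\ker(\Phi) = \Phi^{-1}(\{\id\})$ is a \emph{closed} normal subgroup of $S_\infty$. The whole argument then reduces to identifying which closed normal subgroups can occur.

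The key algebraic input is the classical Baer--Schreier--Ulam description of the normal subgroup lattice of $S_\infty$, namely that the only normal subgroups are $\{\id\}$, the finitary alternating group $A_{\mathrm{fin}}$, the finitary symmetric group $S_{\mathrm{fin}}$, and $S_\infty$ itself; in particular every \emph{nontrivial} normal subgroup contains $A_{\mathrm{fin}}$. I would either cite this or sketch the standard conjugation argument: beginning with any $g \neq \id$ in a normal subgroup $N$, closing under conjugation by arbitrary permutations and under products yields an arbitrary $3$-cycle, and the $3$-cycles generate $A_{\mathrm{fin}}$.

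Next I would verify that $A_{\mathrm{fin}}$ is \emph{dense} in $S_\infty$, so that it cannot itself be closed. Given $f \in S_\infty$ and $n$, the finite injection $f \upto n$ sends $\{0,\dots,n-1\}$ to a set of the same cardinality, so it extends to a finitary permutation $g$ agreeing with $f$ on $\{0,\dots,n-1\}$; composing $g$ with a transposition of two points lying above every coordinate involved corrects the parity without disturbing $g \upto n$, producing an element of $A_{\mathrm{fin}} \cap \mathcal U(f \upto n)$. Since $f$ and $n$ were arbitrary, $\cl(A_{\mathrm{fin}}) = S_\infty$.

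Putting the pieces together: if $\ker(\Phi) \neq \{\id\}$, then $\ker(\Phi) \supseteq A_{\mathrm{fin}}$ by the structural fact, and because $\ker(\Phi)$ is closed while $A_{\mathrm{fin}}$ is dense, we obtain $\ker(\Phi) \supseteq \cl(A_{\mathrm{fin}}) = S_\infty$, contradicting the hypothesis $\ker(\Phi) \neq S_\infty$. Hence $\ker(\Phi) = \{\id\}$ and $\Phi$ is injective. The main obstacle is marshalling the normal-subgroup classification for $S_\infty$ (or reproving the relevant direction that every nontrivial normal subgroup contains $A_{\mathrm{fin}}$); once that is in hand, the continuity observation and the density computation are routine.
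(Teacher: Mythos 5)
Your proposal is correct and takes essentially the same route as the paper: observe that $\ker(\Phi)$ is a closed normal subgroup, invoke the Schreier--Ulam classification of normal subgroups of $S_\infty$, and conclude that only $\{\id\}$ and $S_\infty$ are closed. The only difference is that you explicitly verify the density of the finitary alternating group (with the parity-correcting transposition), a detail the paper leaves implicit in the assertion that the two finitary normal subgroups are not closed.
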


\begin{proof}
Since $\Phi$ is continuous, $\ker(\Phi)$ is a closed normal subgroup of $S_\infty$.  On the other hand, it is a theorem of Schreier-Ulam that the only normal subgroups of $S_\infty$ are $\{ \id \}$, the infinite alternating group, the group of finite support permutations and $S_\infty$ itself.  Of these, only $\{ \id \}$ and $S_\infty$ are closed.
\end{proof}

Noting that every $K_\sigma$ subgroup of $S_\infty$ is a proper subgroup, Lemma~\ref{injecto} implies that a group-homomorphism reduction between $K_\sigma$ subgroups of $S_\infty$ must be injective.  (This follows from the fact that, if $A = \varphi^{-1} (B)$, then $\ker (\varphi) \subseteq A$.)

\begin{lemma}\label{fin ext}
Suppose that $\alpha \in \omega^\omega$ is such that $(\forall n) (\alpha(n) \geq n)$. If $f \in K_\alpha$ and $s \subset f$ is a finite injection, then there is a finite support permutation $\pi \in K_\alpha$ such that $s \subset \pi$.
\end{lemma}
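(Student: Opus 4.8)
The plan is to read the statement as an extension problem. Writing $A=\dom(s)$ and $B=\ran(s)$ (both finite, with $s\colon A\to B$ a bijection since $s\subset f$), I must complete $s$ to a permutation $\pi$ which is the identity off a finite set and satisfies $\pi,\pi^{-1}\le\alpha$. Call an unordered pair $\{x,y\}$ \emph{admissible} if $y\le\alpha(x)$ and $x\le\alpha(y)$; a finite-support permutation then lies in $K_\alpha$ precisely when every non-trivial edge $\{z,\pi(z)\}$ of its cycle decomposition is admissible. Two families of admissible pairs are free of charge: every pair $\{x,x\}$ (because $\alpha(x)\ge x$), and — crucially — every pair $\{x,f(x)\}$ (because $f(x)\le\alpha(x)$ and $x=f^{-1}(f(x))\le\alpha(f(x))$). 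Thus each single step along $f$ or $f^{-1}$, and each fixed point, is automatically admissible; this is the content of $f\in K_\alpha$ that I intend to exploit.

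First I would decompose $\omega$ into $f$-orbits, noting that only finitely many of them meet the finite set $R=A\cup B$. On any orbit disjoint from $R$ I set $\pi=\id$, and on any \emph{finite} orbit meeting $R$ I set $\pi=f$; in both cases $\pi\supseteq s$ there, the support stays finite, and every edge used is admissible. This reduces the problem to a single infinite $f$-orbit $O$ meeting $R$. Since $s\subset f$, the graph of $s$ meets $O$ in finitely many maximal \emph{arcs} $r_1\to r_2\to\cdots\to r_m$ (consecutive under $f$), and maximality forces $r_1\in A\setminus B$ and $r_m\in B\setminus A$ (if $r_1\in B$ then $f^{-1}(r_1)=s^{-1}(r_1)\in R$, and symmetrically for $r_m$), with the orbit continuing past both ends. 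The remaining task is to splice each such arc into a finite cycle, fixing the rest of $O$, using only admissible edges.

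The main obstacle is exactly this closing step: a finite cycle containing the forced sub-path $r_1\to\cdots\to r_m$ must use at least one edge that is not a single $f$-step, whose admissibility is not automatic (closing directly by $\pi(r_m)=r_1$ can violate the bound). Here I would use that $O$ is infinite, so its values tend to $\infty$ in both directions and attain a least value; following $O$ by admissible $f$- and $f^{-1}$-steps toward that minimal region, one reaches low-valued orbit points, and I would then close the cycle by a short admissible return path through these low-valued points — moving, if necessary, a few further fresh points, which the finite-support requirement permits. The point that makes this succeed is that $\alpha$ must already be large enough to \emph{carry} the infinite orbit, and this is precisely what furnishes the extra admissible pairs needed to route from the far end of an arc back to its start. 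Verifying that an admissible closing always exists — equivalently, that the admissibility graph on $O$ together with finitely many fresh points is connected enough to complete every arc into a cycle — is the crux, and it is exactly where the \emph{fullness} of $f$ (rather than mere $\alpha$-boundedness of the finite piece $s$) is indispensable.
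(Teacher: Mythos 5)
Your setup coincides with the paper's: the same reduction (identity on orbits missing $\dom(s)\cup\ran(s)$, $\pi=f$ on finite cycles meeting it), and the same identification of the difficulty, namely the one ``wrap-around'' edge of each truncated infinite cycle, whose admissibility is not inherited from $f$. But at exactly that point your argument stops being a proof: you propose to ``close the cycle by a short admissible return path through these low-valued points,'' and you yourself concede that verifying such a closing exists ``is the crux.'' That verification is the entire content of the lemma, and your sketch does not supply it. Note, moreover, that routing through low-valued points is not automatically sound: for an edge $\{x,y\}$ with $y<x$, the inequality $y\le\alpha(x)$ is free (since $\alpha(x)\ge x>y$), but the return inequality $x\le\alpha(y)$ is not --- $\alpha$ may be very small at low points, and nothing about ``the orbit attains a least value'' certifies otherwise. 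So the connectivity of your admissibility graph is precisely what is in question, and the proposal leaves it unestablished.

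The paper closes this gap with one concrete choice, which you may find clarifying. Write the infinite cycle as $\tau=[\ldots,a_{-1},a_0,a_1,\ldots]$, let all points of $\dom(s)\cup\ran(s)$ on it lie among $a_{n_0},\ldots,a_{n_1}$, and enlarge $n_1$ so that $a_{n_1}>a_{n_0}$. The backward values $a_i$ ($i\le n_0$) are distinct natural numbers, so only finitely many of them lie below $a_{n_1}$; hence there is a crossing index $m\le n_0$ with $a_m<a_{n_1}<a_{m-1}$. Take $\tau^\ast=[a_m,\ldots,a_{n_0},\ldots,a_{n_1}]$. The only edge not already an $f$-step is $a_{n_1}\mapsto a_m$, and it is admissible for exactly the reason your heuristic hoped for but did not prove: $a_m<a_{n_1}\le\alpha(a_{n_1})$ gives one bound, and $a_{n_1}<a_{m-1}=\tau^{-1}(a_m)\le\alpha(a_m)$ gives the other. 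In other words, the witness that $\alpha$ is large at the low point $a_m$ is the orbit itself, because $f^{-1}(a_m)$ is large and $f^{-1}\le\alpha$; this is the precise sense in which the ``fullness'' of $f$ enters. Two further simplifications follow from this choice: no fresh points outside the orbit are ever needed, and one should take a \emph{single} segment per infinite cycle containing all of $\dom(s)\cup\ran(s)$ (one closing edge suffices), rather than closing each maximal arc of $s$ separately, which would multiply the unproved closings and force their return paths to avoid one another.
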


\begin{proof}
Let $\mathcal S$ be the set of cycles $\sigma \subset f$ such that $\supp (\sigma)$ intersects the domain or range of $s$.  Since $s$ is a finite function, $\mathcal S$ is a finite set of disjoint cycles.  For each $\infty$-cycle $\tau \in \mathcal S$, we will define a finite cycle $\tau^\ast \in K_\alpha$ such that $\tau^\ast$ agrees with $\tau$ on $\dom (s) \cup \ran (s)$.  Write $\tau$ as 
\[
[\ldots a_{-1} , a_0 , a_1 , \ldots ].
\]

Let $n_0 , n_1 \in \mathbb Z$ be such that $n_0 \leq n_1$ and if $a_i \in {\rm dom} (s) \cup {\rm ran} (s)$, for some $i$, then $n_0 \leq i \leq n_1$.  By taking $n_1$ large enough, we may assume that $a_{n_1} > a_{n_0}$.  Let $m \leq n_0$ be large enough that $a_m < a_{n_1}$ and $a_{m-1} > a_{n_1}$.  (Note that we have strict inequalities since $\tau$ is an $\infty$-cycle and hence all $a_i$ are distinct.)  Define 
\[
\tau^\ast = [a_m , \ldots , a_{n_0} , \ldots , a_{n_1}].
\]

We will verify that $\tau^\ast \in K_\alpha$, i.e., $\tau^\ast , (\tau^\ast)^{-1}$ are both bounded by $\alpha$.  Since $\tau \subset f \in K_\alpha$, we know that $\tau, \tau^{-1}$ are bounded by $\alpha$.  Hence we need only check that $a_{n_1} \leq \alpha (a_m)$ and $a_m \leq \alpha(a_{n_1})$, since $\tau^\ast$ agrees with $\tau$, except at $a_{n_1}$.  That $a_{n_1} \leq \alpha (a_m)$ follows from 
\[
a_{n_1} < a_{m-1} = \tau^{-1} (a_m) \leq \alpha(a_m).
\]
(We are using the fact that $\tau^{-1} \leq \alpha$.)  On the other hand, $a_m \leq \alpha(a_{n_1})$ follows from the fact that 
\[
a_m < a_{n_1} \leq \alpha (a_{n_1}),
\]
by our assumption that $(\forall n) (\alpha (n) \geq n)$.

We may now define the desired $\pi$ as in the statement of the lemma.  Let $\pi$ be the product of all finite cycles in $\mathcal S$ together with all $\tau^\ast$, for $\infty$-cycles $\tau \in \mathcal S$.
\end{proof}

\begin{lemma}\label{single beta}
If $\Phi$ is a continuous endomorphism of $S_\infty$ and $\alpha , \beta_m \in \omega^\omega$ are such that $(\forall n) (\alpha (n) \geq n)$, $\Phi (K_\alpha) \subseteq \bigcup_n K_{\beta_m}$ and $\{ \beta_m : m \in \omega\}$ is closed under composition, then there exist $n,m \in \omega$ such that 
\[
\Phi (\mathcal U_n \cap K_\alpha) \subseteq K_{\beta_m}.
\]
\end{lemma}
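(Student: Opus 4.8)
The plan is to localize via the Baire category theorem and then translate the resulting neighborhood back to the identity, absorbing the translate using closure under composition. First I would normalize the family: we may assume each $\beta_m$ is increasing, since replacing $\{\beta_m\}$ by the closure under composition of the increasing majorants $k\mapsto\max_{j\le k}\beta_m(j)$ only enlarges each $K_{\beta_m}$, keeps the family closed under composition, and preserves the hypothesis that $\Phi(K_\alpha)$ is covered by the family.

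\emph{Localization.} View $K_\alpha$ as a compact metrizable, hence Baire, space. For each $m$ put $C_m=K_\alpha\cap\Phi^{-1}(K_{\beta_m})$. Since $\Phi$ is continuous and each $K_{\beta_m}$ is compact and therefore closed, each $C_m$ is closed in $K_\alpha$; moreover $\bigcup_m C_m=K_\alpha$ by the hypothesis $\Phi(K_\alpha)\subseteq\bigcup_m K_{\beta_m}$. By the Baire category theorem some $C_m$ has nonempty interior in $K_\alpha$, so there is a finite injection $s$ with $\emptyset\neq\mathcal U(s)\cap K_\alpha\subseteq C_m$; equivalently $\Phi(\mathcal U(s)\cap K_\alpha)\subseteq K_{\beta_m}$.

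\emph{Translation to the identity.} Because $\mathcal U(s)\cap K_\alpha\neq\emptyset$, Lemma~\ref{fin ext} (applicable as $\alpha(k)\ge k$ for all $k$) produces a finite-support permutation $\pi\in K_\alpha$ with $s\subset\pi$; in particular $\pi\in\mathcal U(s)\cap K_\alpha$, so $\Phi(\pi)\in K_{\beta_m}$. Choose $n$ with $\dom(s)\cup\ran(s)\cup\supp(\pi)\subseteq\{0,\dots,n-1\}$. I claim $\Phi(\mathcal U_n\cap K_\alpha)\subseteq K_{\beta_m\circ\beta_m}$. Fix $g\in\mathcal U_n\cap K_\alpha$, so $g$ fixes $\{0,\dots,n-1\}$ pointwise and hence permutes $\{n,n+1,\dots\}$. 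Since $\supp(\pi)\subseteq\{0,\dots,n-1\}$, the product $\pi g$ acts as $\pi$ on $\{0,\dots,n-1\}$ and as $g$ on $\{n,n+1,\dots\}$. Consequently $\pi g$ extends $s$, so $\pi g\in\mathcal U(s)$; and examining the two blocks shows $\pi g\le\alpha$ and $(\pi g)^{-1}\le\alpha$, using $\pi,\pi^{-1}\le\alpha$ below $n$ and $g,g^{-1}\le\alpha$ at and above $n$, so $\pi g\in K_\alpha$. Hence $\pi g\in\mathcal U(s)\cap K_\alpha$ and $\Phi(\pi g)\in K_{\beta_m}$.

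\emph{Absorbing the translate.} Since $\Phi$ is a homomorphism, $\Phi(g)=\Phi(\pi)^{-1}\Phi(\pi g)$ is a product of two members of $K_{\beta_m}$ (recall $K_{\beta_m}^{-1}=K_{\beta_m}$). As $\beta_m$ is increasing, such a product and its inverse are bounded by $\beta_m\circ\beta_m$, so $\Phi(g)\in K_{\beta_m\circ\beta_m}$; by closure under composition $\beta_m\circ\beta_m=\beta_{m'}$ for some $m'$, giving $\Phi(\mathcal U_n\cap K_\alpha)\subseteq K_{\beta_{m'}}$, as required. The hard part will be the verification in the translation step that $\pi g$ remains in $K_\alpha$: this set is not closed under multiplication, so the argument relies essentially on the block decomposition forced by $g\in\mathcal U_n$ together with the finiteness of $\supp(\pi)$, which lets the bounds on $\pi$ and on $g$ be used on disjoint coordinate blocks.
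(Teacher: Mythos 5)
Your proof is correct and follows essentially the same route as the paper's: Baire category localizes $\Phi$ into a single $K_{\beta_m}$ on a basic open set meeting $K_\alpha$, Lemma~\ref{fin ext} upgrades the finite injection to a finite-support $\pi \in K_\alpha$, and the resulting translate of $\mathcal U_n \cap K_\alpha$ is absorbed using closure of the family under composition. The only substantive differences are in the absorption step, where you invert $\Phi(\pi)$ directly via $K_{\beta_m}^{-1} = K_{\beta_m}$ (so $\Phi(g)$ is a product of just two elements of $K_{\beta_m}$) while the paper instead writes $\Phi(\pi)^{-1} = \Phi(\pi)^{n!-1}$ and bounds a product of $n!$ elements, and in your preliminary reduction to increasing $\beta_m$, which makes explicit a monotonicity assumption that the paper's bound (``any composite of $n!$ elements of $K_{\beta_m}$ is bounded by the $n!$-fold composite of $\beta_m$'') uses implicitly and which is harmless in the lemma's only application, where the $\beta_m$ are taken to be strictly increasing.
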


\begin{proof}
The following claim is the core of the proof.

\begin{claim}
There exists a finite support permutation $\pi \in K_\alpha$ and $m \in \omega$ such that $\Phi (\mathcal U(\pi) \cap K_\alpha) \subseteq K_{\beta_m}$.
\end{claim}

\proofclaim  Let $C$ be the compact set $\Phi (K_\alpha)$.  Applying the Baire Category Theorem to $C$, it follows that exists $m \in \omega$ such that $K_{\beta_m} \cap C$ is non-meager relative to $C$.  As $K_{\beta_m}$ is closed, this implies that there exists a nonempty open set $\mathcal V \subseteq S_\infty$ such that $\mathcal V \cap C \subseteq K_{\beta_m}$.  Let $\mathcal U = \Phi^{-1} (\mathcal V)$.  Since $\mathcal U \cap K_\alpha \neq \emptyset$, there is a finite injection $s : \omega \rightarrow \omega$ with $\mathcal U(s) \subseteq \mathcal U$ and $\mathcal U(s) \cap K_\alpha \neq \emptyset$.  Lemma~\ref{fin ext} thus yields a finite support permutation $\pi \in K_\alpha$ such that $\mathcal U (\pi) \subseteq \mathcal U$.  Hence 
\[
\Phi (\mathcal U(\pi) \cap K_\alpha) \subseteq \Phi (\mathcal U \cap K_\alpha) \subseteq (\mathcal V \cap C) \subseteq K_{\beta_m}.
\]
This completes the claim.

\medskip

Suppose that $\pi , m$ are as in the claim, such that $\Phi (\mathcal U(\pi) \cap K_\alpha) \subseteq K_{\beta_m}$.  If $n$ is an upper bound for the support of $\pi$, then $\pi^{n!} = \id$.  Note that each permutation in $\mathcal U(\pi) \cap K_\alpha$ has the form $\pi \circ f$, for some $f \in K_\alpha$, with $\supp (f)$ disjoint from $\supp (\pi)$.  With this in mind, fix an arbitrary $\pi \circ f \in \mathcal U (\pi) \cap K_\alpha$ and observe that 
\[
\pi^{n! -1} \circ \pi \circ f = f
\]
and hence $\Phi (f)$ is the composite of $n!$ elements of $K_{\beta_m}$, since $\Phi (\pi) , \Phi (\pi \circ f) \in K_{\beta_m}$.  On the other hand, any composite of $n!$ elements of $K_{\beta_m}$ is bounded by the $n!$-fold composite of $\beta_m$ with itself.  As we assumed that $\{ \beta_m : m \in \omega\}$ is closed under composition, we conclude that $\Phi (\mathcal U_n \cap K_\alpha) \subseteq K_{\beta_{r}}$, for an appropriate $r \in \omega$.  This completes the proof.
\end{proof}

\begin{lemma}\label{big alpha}
Given increasing $\beta_m \in \omega^\omega$, there exists $\alpha \in \omega^\omega$ such that, for each $n,m \in \omega$, there is no continuous injective group homomorphism $\Phi$ of $S_\infty$ with $\Phi (\mathcal U_n \cap K_\alpha) \subseteq K_{\beta_m}$.
\end{lemma}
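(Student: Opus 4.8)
The plan is to build a single $\alpha$ by a diagonalization over the countably many pairs $(n,m)$, forcing $\alpha$ to outrun the iterates of each $\beta_m$. Fix a pair $(n,m)$ and a continuous injective $\Phi$, and suppose toward a contradiction that $\Phi(\mathcal U_n \cap K_\alpha) \subseteq K_{\beta_m}$. (By Lemma~\ref{injecto}, injectivity is automatic once the kernel is proper, which holds here because $K_{\beta_m}$ is a proper subgroup.) The first observation is that $\mathcal U_n \cap K_\alpha$ is algebraically rich at the bottom: whenever $\alpha$ is large on a block $[n,N)$ — precisely, $\alpha(i) \ge N-1$ for each $i \in [n,N)$ — every permutation of $\{n,\ldots,N-1\}$ lies in $\mathcal U_n \cap K_\alpha$, so the full finite symmetric group $S_{[n,N)}$ is a subgroup of $\mathcal U_n \cap K_\alpha$. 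Its image $H = \Phi(S_{[n,N)}) \cong S_{N-n}$ is then a subgroup of $K_{\beta_m}$.

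Next I extract a numerical inequality from the presence of $H$ in $K_{\beta_m}$. A faithful action of $S_{N-n}$ on $\omega$ (using, for $N-n$ large, that the minimal degree of a faithful permutation representation of a symmetric group is its natural degree) must have an orbit $O$ with $|O| \ge N-n$. Since $H \subseteq K_{\beta_m}$ acts transitively on $O$, some $h \in H$ sends $\min O$ to $\max O$, whence $\max O \le \beta_m(\min O)$ and therefore $N - n \le |O| \le \beta_m(s) - s + 1$, where $s = \min O$. The point $s$ is a target point moved by $H$, hence by some generator of $H$; because $\mathcal U_n$ is generated by the adjacent transpositions $[i,i+1]$ (for $i \ge n$), each of which lies in $\mathcal U_n \cap K_\alpha$, continuity pins $s$ to the least target point $c$ actually moved by $\Phi(\mathcal U_n)$, a finite quantity determined by the modulus of continuity of $\Phi$ at $\mathcal U_n$. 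Iterating this bound along blocks stacked on the forward orbit $n, \alpha(n), \alpha^{(2)}(n), \ldots$ (writing $\alpha^{(k)}$ and $\beta_m^{(k)}$ for $k$-fold composites) converts the single inequality into a tower: one obtains $\alpha^{(k)}(n) \le \beta_m^{(k)}(c)$ for every $k$.

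It remains to choose $\alpha$ so that this tower is impossible, for every finite starting value $c$ and simultaneously for all pairs $(n,m)$. Here I use that each $\beta_m$ is strictly increasing, so its gap $\beta_m(x) - x$ is non-decreasing and $\beta_m^{(k)}(x) \to \infty$ in $k$ unless $\beta_m$ is eventually the identity (a degenerate case in which $K_{\beta_m}$ is essentially trivial and is dispatched directly). Reserving a threshold $T_{(n,m)}$ for the pair $(n,m)$, with the $T_{(\cdot,m)}$ increasing rapidly in $m$ so that only finitely many $m$ are active below any given argument, I define $\alpha$ to satisfy $\alpha(x) \ge \beta_m(\beta_m(x))$ for all $x \ge T_{(n,m)}$; for each fixed $x$ this is a maximum over finitely many $m$, so $\alpha$ is a well-defined element of $\omega^\omega$. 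Once the forward orbit $\alpha^{(k)}(n)$ climbs past $T_{(n,m)}$, each further $\alpha$-step advances the $\beta_m$-iteration count by two, so $\alpha^{(k)}(n)$ eventually exceeds $\beta_m^{(k)}(c)$ regardless of the head start $c$, contradicting the tower.

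The main obstacle — and the step requiring the most care — is establishing the chaining inequality for a \emph{general} continuous injective $\Phi$, not merely for conjugations by a bijection of $\omega$. The danger is that $\Phi$ might ``spread out'', sending images of low-support elements into a remote region where $\beta_m$ is large, thereby inflating $\beta_m(s)$ and defeating any one-shot orbit estimate; indeed a single block is always defeated when $\beta_m$ has unbounded gaps. The resolution is exactly the iteration: the adversary's ability to spread is quantified by the finite number $c$, the least target point $\Phi(\mathcal U_n)$ moves, and the tower $\alpha^{(k)}(n) \le \beta_m^{(k)}(c)$ consumes this finite head start after finitely many steps. Making the orbit-size bound and the continuity control interlock correctly along the chain — in particular verifying, via Lemma~\ref{fin ext}, that the requisite generators and blocks genuinely sit inside $\mathcal U_n \cap K_\alpha$ at each stage — is where the real work lies.
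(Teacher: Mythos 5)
Your opening moves are sound: for a block $[n,N)$ on which $\alpha$ is large, the full finite symmetric group $S_{[n,N)}$ does sit inside $\mathcal U_n \cap K_\alpha$, its image $H=\Phi(S_{[n,N)})$ is a subgroup of $S_\infty$ contained in the set $K_{\beta_m}$, the minimal-faithful-degree fact gives an orbit $O$ with $|O|\geq N-n$, and transitivity plus $H\subseteq K_{\beta_m}$ gives $N-n\leq \beta_m(s)-s+1$ with $s=\min O$. The fatal gap is the step you yourself flag as "where the real work lies": the chaining inequality $\alpha^{(k)}(n)\leq \beta_m^{(k)}(c)$ is never derived, and the one mechanism you offer for it is backwards. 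You claim "continuity pins $s$ to the least target point $c$ actually moved by $\Phi(\mathcal U_n)$." Continuity does the opposite: since $[i,i+1]\rightarrow \id$ in $S_\infty$ as $i\rightarrow\infty$, continuity of $\Phi$ forces $\Phi([i,i+1])\rightarrow\id$, i.e.\ $\supp(\Phi([i,i+1]))$ escapes every finite set. So the big orbit of $\Phi(S_{B_k})$ for a far-out block $B_k$ must eventually avoid any fixed finite region; the least point $c$ moved by $\Phi(\mathcal U_n)$ gives only the useless lower bound $s\geq c$, never the needed upper bound on $s$. And this cannot be patched by knowing the block images alone: commuting copies of arbitrarily large finite symmetric groups really do exist inside a single $K_\beta$ (take $\beta(x)=2x$ and place disjoint natural copies of $S_p$ inside the gaps $[N,2N]$), so any correct proof must extract quantitative information from the images of the elements \emph{linking} consecutive blocks under a general continuous injective homomorphism --- where images of transpositions are merely involutions with no support control --- and your proposal contains no such mechanism. (A secondary problem: even granting your tower, your margin of "two $\beta_m$-iterations per $\alpha$-step" is too thin; the chaining itself costs about two applications of $\beta_m$ per block, so the adversary's finite head start $c$ is never consumed. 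One needs a per-step gain that grows with $k$.)

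For contrast, the paper confronts exactly this spreading problem and solves it with homomorphism-invariant algebra rather than supports. First, an anchoring trick: $\Phi([n,n+1,k])=\Phi([n+2,k])\circ\Phi([n,n+1,n+2])\circ\Phi([n+2,k])$, and $[n+2,k]$ is a product of fewer than $2k$ adjacent transpositions lying in $\mathcal U_n\cap K_\alpha$, so $\Phi([n,n+1,k])$ moves some point $b_0\leq \beta^{2k}(a)$, where $a$ is the fixed least point moved by $\Phi([n,n+1,n+2])$. Second, an order argument: $[n,n+1,k]\cdot[k,k+1][k+2,k+3]\cdots$ has order $4$ while $[n,n+1,k]$ has order $3$, which forces $\supp(\Phi([k,k+1][k+2,k+3]\cdots))$ to meet every orbit of $\Phi([n,n+1,k])$, hence to contain a point $b_1\leq\beta^{4k}(a)$ --- this is how the paper bounds where the image of a far-out involution lives. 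Third, the invariant that produces the contradiction is not orbit size but chains of square roots: an involution in $K_\beta$ moving $b_1$ admits no chain of roots in $K_\beta$ longer than $\beta(b_1)$, while $\alpha$ is built so that $[k,k+1][k+2,k+3]\cdots$ has a chain of roots in $\mathcal U_n\cap K_\alpha$ of length exceeding $\max_{i\leq k}\beta_i^{4k+1}(k)$, and the contradiction is taken at $k\geq m,a$ so that the head start $a$ is absorbed. Your orbit-size idea is a reasonable opening, but without an analogue of the anchoring-plus-order argument (or some other way to control, for an abstract $\Phi$, where the linking elements' images act), the proposal does not prove the lemma.
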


\begin{proof}
For the sake of the present proof, if $f \in S_\infty$, we define a {\em chain of roots of length $n$} for $f$ to be a sequence $f_0 , \ldots , f_n \in S_\infty$ such that $f_0 = f$ and $f_j^2 = f_{j-1}$, for each $1 \leq j \leq n$.

Suppose that $f \in K_\alpha$ is a product of disjoint $2^k$-cycles, for some $k \geq 1$.  If $n \in \supp (f)$ (i.e., $f(n) \neq n$), then $f$ has no chain of roots in $K_\alpha$, of length greater than $\alpha (n)$.  This follows from the fact that, were $f_0 , \ldots , f_p$ a chain of roots of length $p > \alpha (n)$, then at least one $f_j$ is not a member of $K_\alpha$, as $f_0 (n) , \ldots , f_p(n)$ are all distinct.  Recall here that ``square-roots'' of products of disjoint $2^k$-cycles are obtained by interleaving terms of distinct cycles to form permutations containing products of disjoint $2^{k+1}$-cycles.  (This is a consequence of the fact that, if $\sigma$ is a $2^m$-cycle, for some $m$, then $\sigma^2$ is a product of two disjoint $2^{m-1}$-cycles.)  

As an example,
\begin{align*}
f_0 &= [0,1] \, [2,3] \, [4,5] \, [6,7]\\
f_1 &= [0,2,1,3] \, [4,6,5,7]\\
f_2 &= [0,4,2,6,1,5,3,7]
\end{align*}
is a chain of roots for $f_0$, of length 2.  We remark that this behavior does not apply to cycles of other lengths.  For instance, in the case of 3-cycles, one has $[1,3,2]^2 = [1,2,3]$.

Let $\alpha \in \omega^\omega$ be such that, for each $k \in \omega$, the permutation 
\[
f_k = [k,k+1] \, [k+2,k+3] \ldots
\]
has a chain of roots in $K_\alpha$ of length at least 
\[
\max_{i \leq k} (\beta_i^{4k+1}(k)) + 1.
\]
We may further assume that $(\forall k) (\alpha (k) \geq k+2)$.

Suppose, towards a contradiction, that $\Phi$ is a continuous endomorphism of $S_\infty$ with $\Phi (\mathcal U_n \cap K_\alpha) \subseteq K_{\beta_m}$, for some $\alpha \in \omega^\omega$ and $m,n \in \omega$.  For simplicity, write $\beta_m = \beta$.  Let $a \in \omega$ be least such that $\Phi ([n,n+1,n+2]) (a) \neq a$.

For each $k \geq n+2$, we have that 
\[
\Phi ([n,n+1,k]) = \Phi ([n+2,k]) \circ \Phi ([n,n+1,n+2]) \circ \Phi ([n+2,k]).
\]
Observe that 
\[
[n+2,k] = [n+2,n+3] \, [n+3 , n+4] \ldots [k-1,k] \, [k-2,k-1] \ldots [n+2,n+3]
\]
and hence $[n+2,k]$ is a product of fewer than $2k$ members of $K_\alpha$, since each $[j,j+1]$ is in $K_\alpha$.  Thus $\Phi ([n+2,k])$ is a product of fewer than 2k members of $K_\beta$.  (Since each $\Phi([j,j+1]) \in K_\beta$, for each $j \geq n$.)  In particular, $\Phi ([n+2,k])$ is bounded by $\beta^{2k}$, the $2k$-fold composite of $\beta$ with itself.  (This follows in part from the fact that $\beta$ was assumed to be increasing.)  Hence we have
\[
\Phi ([n+2,k])^{-1} (a) = \Phi ([n+2,k]) (a) \leq \beta^{2k} (a)
\]
and thus there exists $b_0 \in \supp (\Phi([n,n+1,k]))$ with $b_0 \leq \beta^{2k} (a)$. 

As noted above, the choice of $\alpha$ guarantees that each $[j,j+1] \in K_\alpha$.  Hence
\[
[k,k+1] \, [k+2,k+3] \ldots \in K_\alpha
\]
and thus 
\[
h = \Phi ([k,k+1] \, [k+2,k+3] \ldots) \in K_\beta.
\]
Observe now that 
\begin{align*}
&\Phi ([n,n+1,k,k+1] \, [k+2,k+3] \, [k+4,k+5] \ldots) \qquad (\ast)\\
&= \Phi ([n,n+1,k] \, [k,k+1] \, [k+2,k+3] \ldots)\\
&= \Phi ([n,n+1,k]) \circ \Phi ([k,k+1] \, [k+2,k+3] \ldots)\\
&= \Phi ([n,n+1,k]) \circ  h
\end{align*}
As can be seen from the line marked $(\ast)$, this permutation has order 4, while $\Phi ([n,n+1,k])$ has order 3.  Thus $\supp (h)$ must intersect each orbit of $\Phi ([n,n+1,k])$, as otherwise the permutation above will contain a 3-cycle and not be of order 4.  In particular, $\supp (h)$ contains an element of the orbit of $b_0$ under $\Phi ([n,n+1,k])$.  This implies that $\supp (h)$ contains an element $b_1 \leq \beta^{4k} (a)$.  As $h$ is of order 2, it must be a product of disjoint $2$-cycles.  We now conclude that $h$ has no chain of roots in $K_\beta$, of length greater than $\beta (b_1) \leq \beta^{4k+1} (a)$.  (Again, we are using the fact that $\beta$ is increasing to obtain this inequality.)

On the other hand, if $k \geq m,a$, then
\[
\beta^{4k+1} (a) \leq \beta^{4k+1}(k) = \beta_m^{4k+1}(k) \leq \max_{i \leq k} (\beta_i^{4k+1}(k))
\]
and $[k,k+1] \, [k+2,k+3] \ldots$ has a chain of roots in $K_\alpha$ of length at least 
\[
\max_{i \leq k} (\beta_i^{4k+1}(k)) +1.
\]
This is a contradiction since $\Phi$ maps $K_\alpha$ into $K_\beta$ and, being a homomorphism, must preserve chains of roots.
\end{proof}

We may now complete the proof of Theorem~\ref{no univ K sigma}.

\begin{proof}[Proof of Theorem~\ref{no univ K sigma}]
Suppose that $\beta_0 , \beta_1 , \ldots \in \omega^\omega$ are given.  With no loss of generality, we may assume that $\{ \beta_m : m \in \omega\}$ is closed under compositions and that each $\beta_m$ is strictly increasing.  (Making these assumptions only enlarges the $K_\sigma$ set $\bigcup_m K_{\beta_m}$.  Also, these two assumptions do not conflict as the composite of increasing functions remains increasing.)  

Let $\alpha \in \omega$ be as in Lemma~\ref{big alpha}, for $\{ \beta_m : m \in \omega\}$.  Here we may assume that $\alpha(n) \geq n$, for each $n$.  If there is a continuous endomorphism $\Phi$ of $S_\infty$ such that $\Phi (K_\alpha) \subseteq \bigcup_m K_{\beta_m}$, then Lemma~\ref{single beta} yields $m,n$ such that $\Phi (\mathcal U_n \cap K_\alpha) \subseteq K_{\beta_m}$.  This contradicts the properties of $\alpha$.
\end{proof}  

%%%%%%%%%%%%%%%%%%%%%%%%%%%%%%%%%%%%%%

%%%%%%%%%%%%%%%%%%%%%%%%%%%%%%%%%%%%%%

\section{Examples}\label{S7}

\subsection{Basic examples}

We restate a proposition from the Introduction which will be our main tool in this section.

\begin{refpropP1}
Suppose that $G_1$ and $G_2$ are topological groups such that there exist continuous injective homomorphisms $\varphi_1 : G_1 \rightarrow G_2$ and $\varphi_2 : G_2 \rightarrow G_1$.  Let $\mathcal C$ be a class of subgroups which is closed under continuous homomorphic images.  If $G_1$ has a universal $\mathcal C$ subgroup, then $G_2$ also has a universal $\mathcal C$ subgroup.
\end{refpropP1}

The following examples are direct applications of Proposition~\ref{P1}, together with Theorem~\ref{T1}.

\begin{example} Let $\mathbf c_0 \subset \mathbb R^\omega$ be the subgroup
\[
\{ x \in \mathbb R^\omega : \lim_n x(n) = 0\}.
\]
Recall that $\mathbf c_0$ is a separable Banach space (hence a Polish group) when equipped with the sup-norm (denoted by $\lVert \cdot \rVert_{\rm sup}$).  Let $\mathcal C$ be either the class of compactly generated or $K_\sigma$ subgroups.  Since $\mathbf c_0$ is nowhere locally compact, Theorem~\ref{T1} does not immediately give universal a $\mathcal C$ subgroup of $\mathbf c_0^\omega$.  Nonetheless, we shall see that $\mathbf c_0^\omega$ has a universal $\mathcal C$ subgroup.

The Banach space topology on $\mathbf c_0$ refines the subspace topology inherited from $\mathbb R^\omega$.  To see this, suppose that $U = I_0 \times \ldots \times I_{k-1} \times \mathbb R^\omega$ is a basic open set in $\mathbb R^\omega$ (where $I_0, \ldots , I_{k-1} \subseteq \mathbb R$ are bounded open intervals) and $x_0 \in U \cap \mathbf c_0$.  Let $\varepsilon >0$ be small enough that, for each $n<k$, the open interval $(x_0(n)-\varepsilon , x_0(n)+\varepsilon)$ is contained in $I_n$.  If 
\[
B = \{ x \in \mathbf c_0 : \lVert x - x_0 \rVert_{\rm sup} < \varepsilon\},
\]
then $B$ is open in $\mathbf c_0$ and $x \in B \subseteq U \cap \mathbf c_0$.  Hence $U \cap \mathbf c_0$ is open with respect to the Banach space topology on $\mathbf c_0$.  This implies that the inclusion map $\mathbf c_0 \rightarrow \mathbb R^\omega$ is a continuous injective homomorphism and hence so is the inclusion $\mathbf c_0^\omega \rightarrow \mathbb R^{\omega \times \omega} \cong \mathbb R^\omega$.  

To apply Proposition~\ref{P1}, we also need to check that there is a continuous injective homomorphism of $\mathbb R^\omega$ into $\mathbf c_0^\omega$.  Indeed, this is witnessed by the map $\varphi : \mathbb R^\omega \rightarrow \mathbf c_0^\omega$ where 
\[ 
\varphi(x)(n) = (x(n),0,0, \ldots).
\]
By Proposition~\ref{P1} we conclude that $\mathbf c_0^\omega$ has a universal $\mathcal C$ subgroup, since $\mathbb R^\omega$ does.
\end{example}

\begin{remark*}
In the previous example, we do not claim that $\RR^\omega$ is isomorphic to a subgroup of $\mathbf c_0$, nor {\it vice versa}, as these would be false statements.
\end{remark*}

By similar arguments using the fact that the Banach space topologies of $\ell^p , \ell^\infty ,\mathbf c \subset \mathbb R^\omega$ refine their subspace topologies, we can also conclude that the groups $(\ell^p)^\omega$, $(\ell^\infty)^\omega$ and $\mathbf c^\omega$ contain universal subgroups for the classes of $K_\sigma$ and compactly generated subgroups.  The case of $(\ell^\infty)^\omega$ is interesting because $\ell^\infty$ (with the sup-norm) is complete, but not separable, i.e., not a Polish space.\footnote{Definitions of the Banach spaces $\ell^p$, $\ell^\infty$ and $\mathbf c$ may be found in Conway \cite{CONWAY.functional}.}

It is also worth mentioning the case of $\ell^2$.  Since $\ell^2$ is a separable Hilbert space and, by Corollary 5.5 in Conway \cite{CONWAY.functional}, all separable Hilbert spaces (over $\mathbb R$) are isomorphic, we have that all separable Hilbert spaces are isomorphic to $\ell^2$.  The comments above thus imply the following.

\begin{proposition}
The countable power of every separable Hilbert space (over $\mathbb R$) contains universal $K_\sigma$ and compactly generated subgroups.
\end{proposition}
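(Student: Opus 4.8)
The plan is to reduce the statement to the single space $\ell^2$ and then mimic the $\mathbf c_0^\omega$ argument, applying Proposition~\ref{P1} with $G_1 = \RR^\omega$ and $G_2 = (\ell^2)^\omega$. By the remarks immediately preceding this proposition, every separable Hilbert space over $\RR$ is isomorphic, as a topological group, to $\ell^2$; so it suffices to produce universal $K_\sigma$ and compactly generated subgroups of $(\ell^2)^\omega$, and the general case will follow by transporting these subgroups across the isomorphism, which plainly preserves the universality property.

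First I would verify that the norm topology on $\ell^2$ refines the subspace topology it inherits from $\RR^\omega$. As in the $\mathbf c_0$ example, given a basic open box $U = I_0 \times \cdots \times I_{k-1} \times \RR^\omega$ and a point $x_0 \in U \cap \ell^2$, a sufficiently small $\lVert \cdot \rVert_2$-ball about $x_0$ controls the first $k$ coordinates and hence lies inside $U \cap \ell^2$. This shows the inclusion $\ell^2 \hookrightarrow \RR^\omega$ is a continuous injective homomorphism, and therefore so is the induced inclusion $(\ell^2)^\omega \hookrightarrow \RR^{\omega \times \omega} \cong \RR^\omega$.

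Next I would exhibit a continuous injective homomorphism in the opposite direction, $\varphi : \RR^\omega \rightarrow (\ell^2)^\omega$, by setting $\varphi(x)(n) = (x(n),0,0,\ldots)$ for each $n$. This clearly lands in $(\ell^2)^\omega$ and is an injective homomorphism; continuity follows because coordinatewise convergence $x_j \to x$ in $\RR^\omega$ yields, in each factor, $\lVert \varphi(x_j)(n) - \varphi(x)(n) \rVert_2 = \lvert x_j(n) - x(n) \rvert \to 0$, which is exactly convergence in the product topology on $(\ell^2)^\omega$ (taking the norm topology on each factor).

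With continuous injective homomorphisms in both directions in hand, and noting that the classes of $K_\sigma$ and compactly generated subgroups are each closed under continuous homomorphic images, Proposition~\ref{P1} applies: since $\RR^\omega$ has universal $K_\sigma$ and compactly generated subgroups by Theorem~\ref{T1} (via Corollary~\ref{k power}), so does $(\ell^2)^\omega$. The only genuinely delicate point is the topology-refinement step, but this is essentially identical to the $\mathbf c_0$ computation already carried out, and everything else is bookkeeping; transporting the resulting universal subgroups through the isomorphism with an arbitrary separable Hilbert space then completes the argument.
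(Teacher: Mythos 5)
Your proposal is correct and follows essentially the same route as the paper: the paper also reduces to $\ell^2$ via the isomorphism of all separable real Hilbert spaces with $\ell^2$, uses the fact that the norm topology on $\ell^2$ refines the subspace topology from $\mathbb R^\omega$ to get the continuous injection $(\ell^2)^\omega \rightarrow \mathbb R^{\omega\times\omega} \cong \mathbb R^\omega$, embeds $\mathbb R^\omega$ back into $(\ell^2)^\omega$ coordinatewise, and concludes by Proposition~\ref{P1} together with Theorem~\ref{T1}. The only difference is presentational: the paper leaves these steps as ``similar arguments'' to the $\mathbf c_0$ example, whereas you have written them out explicitly.
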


\begin{remark*}
The arguments above apply equally to $\mathbb C$ in place of $\mathbb R$.  (I.e., $\mathbb C^\omega$ also has universal subgroups in these two classes.)  Thus the proposition above applies to complex Hilbert spaces as well.
\end{remark*}

The following example shows the existence of universal subgroups in another non-separable topological group.

\begin{example} Let $S$ be a separable space and $C(S)$ be the additive group of continuous real-valued functions on $S$, with the topology of uniform convergence.  The group $C(S)$ is metrizable, but not separable if $S$ is not compact.  A compatible metric is  
\[
\rho(f,g) = \sup \{\min \{|f(x) - g(x)| , 1\} : x \in S\}.
\]
The distance function $\rho$ is the so-called ``uniform metric'' on $C(S)$.\footnote{See Munkres \cite[p.~266]{MUN.topology}.}

Let $A \subseteq S$ be a countable dense set.  Consider the Polish group $\mathbb R^A$, equipped with the product topology, i.e., $\mathbb R^A \cong \mathbb R^\omega$.  The map $\psi : C(S) \rightarrow \mathbb R^A$ defined by 
\[
f \mapsto f \upto A
\]
is a group homomorphism.  To see that $\psi$ is continuous it suffices to check that $\psi^{-1}(U)$ is open when $U$ is a basic neighborhood of $\bar 0$.  Given a basic neighborhood $U \ni \bar 0$, we may assume that, for some finite set $F \subseteq A$ and $\varepsilon > 0$,
\[ 
U = \{ x \in \mathbb R^A :  (\forall a \in F) (|x(a)|<\varepsilon)\}.
\]
Let $\mathcal F = \{ f \in C(S) : (\forall a \in F) (f(a) = 0)\}$ and take 
\[
\mathcal V = \bigcup_{f \in \mathcal F} \{ g \in C(S) : \rho (f,g) < \varepsilon\}.
\]
As the union of open sets, $\mathcal V$ is open in $C(S)$ and $\psi^{-1} (U) = \mathcal V$.  Also, $\psi$ is injective because $A$ is dense and thus $f \upto A = g \upto A$ implies $f = g$.  It follows that $C(S)^\omega$ may be mapped into $\mathbb R^{A \times \omega} \cong \mathbb R^\omega$ as well, via a continuous injective group homomorphism.

Finally, note that $\mathbb R^\omega$ embeds in $C(S)^\omega$ (as a closed subgroup in this case) via the map $\varphi : \mathbb R^\omega \rightarrow C(S)^\omega$, where $\varphi(x)(n)$ is the constant function $f \equiv x(n)$.  Proposition~\ref{P1} now lets us conclude that $C(S)^\omega$ contains universal compactly generated and $K_\sigma$ subgroups. 
\end{example}

As noted in Kechris \cite[\S 12.E]{KECHRIS.dst}, every separable Banach space is isomorphic to a closed subspace of $C(2^\omega)$.  By the previous example, we therefore have

\begin{proposition}
Let $\mathcal C$ be either the classes of compactly generated or the class of $K_\sigma$ subgroups.  There is a subgroup $H_0 \subseteq C(2^\omega)^\omega$, with $H_0 \in \mathcal C$, such that for any separable Banach space $\mathfrak B$ and any subgroup $H \subseteq \mathfrak B^\omega$ in $\mathcal C$, there is a continuous group homomorphism $\varphi : \mathfrak B^\omega \rightarrow C(2^\omega)^\omega$ such that $H = \varphi^{-1} (H_0)$.
\end{proposition}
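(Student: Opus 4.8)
The plan is to combine the previous example with the embedding theorem for separable Banach spaces cited just above. By that example applied to $S = 2^\omega$, the group $C(2^\omega)^\omega$ possesses a subgroup $H_0 \in \mathcal C$ which is universal for $\mathcal C$ subgroups of $C(2^\omega)^\omega$ itself. I take this $H_0$ as the desired subgroup; it then remains only to show that every $\mathcal C$ subgroup of $\mathfrak B^\omega$, for an arbitrary separable Banach space $\mathfrak B$, is group-homomorphism reducible to $H_0$.

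First I would fix a separable Banach space $\mathfrak B$ and invoke the fact (Kechris \cite[\S 12.E]{KECHRIS.dst}) that there is a linear isometry $\iota : \mathfrak B \rightarrow C(2^\omega)$ onto a closed subspace. In particular $\iota$ is a continuous injective homomorphism of additive topological groups, and hence so is the coordinatewise map $\iota^\omega : \mathfrak B^\omega \rightarrow C(2^\omega)^\omega$: continuity is immediate in the product topology, and injectivity follows from that of $\iota$.

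Next, given a subgroup $H \subseteq \mathfrak B^\omega$ in $\mathcal C$, I would observe that its image $\iota^\omega(H)$ is again a $\mathcal C$ subgroup of $C(2^\omega)^\omega$, since $\mathcal C$ is preserved under continuous homomorphic images: if $H = \bigcup_n K_n$ with each $K_n$ compact, then $\iota^\omega(H) = \bigcup_n \iota^\omega(K_n)$ is $K_\sigma$, while if $H = \langle K \rangle$ with $K$ compact, then $\iota^\omega(H) = \langle \iota^\omega(K) \rangle$ is compactly generated. By the universality of $H_0$, there is a continuous homomorphism $\psi : C(2^\omega)^\omega \rightarrow C(2^\omega)^\omega$ with $\psi^{-1}(H_0) = \iota^\omega(H)$. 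Setting $\varphi = \psi \circ \iota^\omega$, I obtain a continuous homomorphism $\mathfrak B^\omega \rightarrow C(2^\omega)^\omega$, and the injectivity of $\iota^\omega$ yields
\[
\varphi^{-1}(H_0) = (\iota^\omega)^{-1}\bigl(\psi^{-1}(H_0)\bigr) = (\iota^\omega)^{-1}\bigl(\iota^\omega(H)\bigr) = H,
\]
exactly as required.

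This is essentially the proof of Proposition~\ref{P1}, with the simplification that only one of the two embeddings is needed: the universal subgroup of the target group $C(2^\omega)^\omega$ is already in hand from the previous example, so no reverse embedding $C(2^\omega)^\omega \rightarrow \mathfrak B^\omega$ is required. I therefore do not anticipate a genuine obstacle. The only points demanding care are the verification that $\iota^\omega$ is a continuous injection and that $\mathcal C$, in either of its two guises, is closed under the continuous homomorphic image $\iota^\omega$; both are routine and are precisely the closure properties already used throughout the section.
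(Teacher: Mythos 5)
Your proposal is correct and is essentially the paper's own argument: the paper derives this proposition directly by combining the universal $\mathcal C$ subgroup of $C(2^\omega)^\omega$ obtained from the preceding $C(S)$ example (with $S = 2^\omega$) with the Kechris/Banach--Mazur embedding of $\mathfrak B$ as a closed subspace of $C(2^\omega)$, exactly as you do. Your observation that this is a one-sided version of Proposition~\ref{P1}, needing no reverse embedding because the universal subgroup already lives in the target group, is precisely the intended reading.
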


The next example relates directly to Theorem~\ref{T1}.

\begin{example}  Let $(G_n)_{n \in \omega}$ be a sequence of locally compact Polish groups.  Consider $\bigoplus_n G_n$ with the subspace topology from $\prod_n G_n$.  Although separable, the direct sum $\bigoplus_n G_n$ is, in general, not Polishable.\footnote{To see this with $G_n = \mathbb R^n$, suppose that $\mathcal T$ is a Polishing topology on $\bigoplus_n \mathbb R^n$.  By the Baire Category Theorem, there is an $n$ such that $\mathbb R^n$ is $\mathcal T$-non-meager in $\bigoplus_n \mathbb R^n$.  Being a subgroup, $\mathbb R^n$ is thus open in $\bigoplus_n \mathbb R^n$, by Pettis' theorem.  This gives a contradiction to separability, since $\mathbb R^n$ has uncountable index in $\bigoplus_n \mathbb R^n$.}

The product $\prod_n G_n^\omega$ is isomorphic to a closed subgroup of $(\bigoplus_n G_n)^\omega$.  Furthermore, $(\bigoplus_n G_n)^\omega$ is isomorphic the the $\mathbf \Pi^0_3$ subgroup
\[
\{ \xi  : (\forall k) (\forall^\infty n) (\xi (n) (k) = \mathbf 1_n) \}
\]
of $\prod_n G_n^\omega$.  Theorem~\ref{T1} together with Proposition~\ref{P1} therefore imply that $(\bigoplus_n G_n)^\omega$ has universal compactly generated and $K_\sigma$ subgroups.
\end{example}

\subsection{Separable Banach spaces}

In this section we show that every separable infinite-dimensional Banach space with an unconditional basis (we give the definition below) has universal compactly generated and $K_\sigma$ subgroups.  The key facts will be Proposition~\ref{P1} along with the following.

\begin{theorem}\label{c_0}
The Banach space $\mathbf c_0$ has universal compactly generated and universal $K_\sigma$ subgroups.
\end{theorem}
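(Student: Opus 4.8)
The plan is to prove Theorem~\ref{c_0} by a direct construction, adapting the proof of Lemma~\ref{loki} and Theorem~\ref{ucg} to the non-locally-compact space $\mathbf c_0$. Since $\mathbf c_0$ is an additive, torsion-free abelian group, every group word is an integer combination $w(x_1,\dots,x_m)=\sum_i n_i x_i$, and a continuous homomorphism $\mathbf c_0\to\mathbf c_0$ is exactly a bounded linear operator. The obstruction is that the reduction used in Lemma~\ref{loki} — copy $x\upto k$ verbatim into the block $I^k_{\tau(k)}$ and zero elsewhere — is \emph{not} bounded on $\mathbf c_0$: a fixed low coordinate $x(i)$ would be copied into blocks of arbitrarily large index, so the image fails to tend to $0$ and has no finite sup-norm. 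Two facts keep the coordinatewise coding machinery alive. First, a symmetric compact $K\subseteq\mathbf c_0$ with $\bar{\mathbf 0}\in K$ is bounded and equi-convergent to $0$, so the projection $\{x(i):x\in K\}$ is compact in $\RR$ for each $i$; hence the finite-profile conclusion of Lemma~\ref{bounded} still holds when the apparatus $\approx_k,\beta^k_x,\mathcal B_k$ is applied coordinatewise with $G=\RR$, since only the first $k$ coordinates are inspected at scale $k$. Second, because $x\mapsto cx$ is a bounded automorphism of $\mathbf c_0$ carrying $\langle K\rangle$ to $\langle cK\rangle$, I may replace any target $K$ by a rescaled copy inside the unit ball; composing the eventual reduction with this automorphism shows it suffices to reduce all $K$ contained in the unit ball. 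For such $K$ the least $k$-approximations are bounded (say $\lVert\beta^k_x\upto k\rVert\le 2$), so the admissible profiles $A^k_j$ range over finite symmetric subsets of $(\QQ\cap[-2,2])^k$ containing $\mathbf 0^k$.

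The crux is a scaling device that reconciles compactness of the universal set with boundedness of the reducing maps. Partition $\omega$ into intervals $I^k_j$ of length $k$, list them as blocks $B_0,B_1,\dots$ in order of position, and fix weights $\mu_m\to 0$ with $0<\mu_m\le 1$. Define $K_0\subseteq\mathbf c_0$ by requiring, for the block $B_m=I^k_j$, that $\mu_m^{-1}(y\upto B_m)$ lie within $1/k$ (coordinatewise) of some $u\in A^k_j$. Each block constraint is a finite union of boxes of diameter $O(\mu_m)$, so $K_0$ is closed and contained in $\prod_m[\,-3\mu_m,3\mu_m\,]^k$; since $\mu_m\to 0$ this ambient box is compact in $\mathbf c_0$, whence $K_0$ is compact. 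Given a unit-ball $K$ with profiles $\mathcal B_k$, choose $\tau(k)$ with $A^k_{\tau(k)}=\mathcal B_k$ and define $\varphi(x)$ to equal $\mu_{m(k)}\,(x\upto k)$ on $I^k_{\tau(k)}$ and $\mathbf 0$ elsewhere, where $m(k)$ is the position-index of $I^k_{\tau(k)}$. Then $\lVert\varphi(x)\rVert\le\lVert x\rVert\sup_m\mu_m$, so $\varphi$ is bounded; and because $k\mapsto m(k)$ is injective, for each $\varepsilon$ only finitely many $k$ have $\mu_{m(k)}\lVert x\rVert\ge\varepsilon$, so $\varphi(x)\in\mathbf c_0$. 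The definition of $K_0$ exactly unscales by $\mu_{m(k)}$ in the active blocks and accepts $\mathbf 0$ in the inactive ones (each $A^k_j\ni\mathbf 0^k$), so running the two claims of Lemma~\ref{loki} verbatim — with the scalar $\mu_{m(k)}$ factored through the integer combination $w$ using $w[\mu K]=\mu\,w[K]$, and the subsequential-limit argument using compactness of $K$ in $\mathbf c_0$ — yields $\varphi^{-1}(w[K_0])=w[K]$ for every word $w$, hence $\varphi^{-1}(\langle K_0\rangle)=\langle K\rangle$. This gives the universal compactly generated subgroup $\langle K_0\rangle$.

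For the universal $K_\sigma$ subgroup I would layer this construction following the proof of Theorem~\ref{T1}(2). Writing a target $K_\sigma$ subgroup as an increasing union $\bigcup_n K_n$ of symmetric compacta, I reserve disjoint families of blocks for each layer $n$, apply the compact-case reduction $\varphi_n$ of the previous paragraph to $K_n$ inside the $n$-th family, and damp the $n$-th family by an additional factor tending to $0$ so that the combined map remains bounded with range in $\mathbf c_0$. The universal object is then defined exactly as $H_0=\langle\tilde A\rangle\cap\bigcup_n\langle A_n\rangle$ in Theorem~\ref{T1}(2): the sets $A_n$ enforce membership in (scaled copies of) $K_0$ beyond stage $n$, and the $K_\sigma$ ``bounding'' set $\tilde A$ controls the least-approximation growth so that $H_0$ is $K_\sigma$. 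I expect the main obstacle throughout to be this simultaneous bookkeeping of the two decay requirements — choosing the weights $\mu_m$ and the per-layer damping so that $K_0$ (resp.\ $H_0$) is genuinely compact (resp.\ $K_\sigma$) in $\mathbf c_0$ while every reducing operator is bounded with range in $\mathbf c_0$; once the weights are fixed, the verification is the scaled transcription of the arguments already given for Theorem~\ref{T1}. Theorem~\ref{c_0}, together with Proposition~\ref{P1} and the coordinate embeddings between $\mathbf c_0$ and a space with unconditional basis, then yields the results of this subsection.
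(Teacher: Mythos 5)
Your proposal is correct in outline but takes a genuinely different route from the paper. You rebuild the coding machinery of Lemma~\ref{loki} directly inside $\mathbf c_0$, with position-indexed weights $\mu_m \rightarrow 0$ baked into both the universal compact set and the reducing operators. The paper instead relays through $\RR^\omega$: since the norm topology of $\mathbf c_0$ refines the product topology, every compact (resp.\ $K_\sigma$) subset of $\mathbf c_0$ remains compact (resp.\ $K_\sigma$) in $\RR^\omega$, so the universal subgroups of $\RR^\omega$ already supplied by Theorem~\ref{T1} reduce it via some $\varphi : \RR^\omega \rightarrow \RR^\omega$; the only new work is a single scaling homomorphism $\psi$ (block scaling $1/np$, with the target-dependent index choice $p = \tau(n)^2$ absorbing the growth of $\varphi$ on the unit ball) which maps $\mathbf c_0$ into $\mathbf c_0$ and pulls the scaled copy $K' \cap \mathbf c_0$ (resp.\ $H' \cap \mathbf c_0$) of the $\RR^\omega$-universal object back to the target. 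Your compactly generated half is sound as written, and your unit-ball normalization buys something real: it makes all profiles uniformly bounded, so a single weight sequence $\mu_m \rightarrow 0$ yields compactness of $K_0$ in $\mathbf c_0$ outright, whereas the paper's appeal to Lemma~\ref{lemming} for $K'$ tacitly needs the coordinate bounds of the $\RR^\omega$-universal compact set to grow sub-linearly in the block index (a detail one must arrange in the construction of $K$ or absorb into the scaling). What the paper's relay buys, in exchange, is that Theorem~\ref{T1} is used as a black box and no coding argument has to be repeated.

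The weak point is your $K_\sigma$ half, which is a plan rather than a proof, and it is precisely the part where the relay pays off most. Re-running Theorem~\ref{T1}(2) inside $\mathbf c_0$ requires rebuilding not only the per-layer compact reductions (your first paragraph again, with per-layer damping) but also the bounding set $\tilde A$, whose $K_\sigma$-ness in the paper rests on local compactness twice: the sets $\{ u : u^\ast \leq s\}$ are compact because the $\cl (U_n)$ are, and the finitely many unconstrained factors are $\sigma$-compact because locally compact Polish groups are. In $\mathbf c_0$ neither fact holds for the ambient group; you must substitute the observations that blocks are finite-dimensional (hence $\sigma$-compact) and that the damped, $s$-bounded block constraints define compact sets in $\mathbf c_0$ by Lemma~\ref{lemming} \emph{provided} the weight attached to the block indexed $(n,s)$ is chosen, before any target appears, to dominate the bound determined by $s$. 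This coordination of weights with the block indexing is exactly the ``bookkeeping'' your sketch defers, and while it can be carried out, none of it is needed in the paper's proof: one application of Theorem~\ref{T1}(2) in $\RR^\omega$, the closure-of-compactness remark, and a single scaling map replace the whole layered construction.
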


In each case, we obtain the desired universal subgroup of $\mathbf c_0$ by ``shrinking'' an appropriate universal subgroup of $\mathbb R^\omega$.  Note that we could also prove these facts directly by modifying the proof of Theorem~\ref{T1}.  We begin with a lemma.

\begin{lemma}\label{lemming}
Suppose $\alpha : \omega \rightarrow \mathbb R^+$ is such that $\lim_n \alpha (n) = 0$.  If $F \subseteq \mathbf c_0$ is closed and $|x(n)| \leq \alpha (n)$, for each $x \in F$ and $n \in \omega$, then $F$ is compact in $\mathbf c_0$.
\end{lemma}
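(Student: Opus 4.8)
The plan is to prove sequential compactness, which is equivalent to compactness since $\mathbf c_0$ is a metric space. As $F$ is a closed subset of the complete space $\mathbf c_0$, any sup-norm limit of elements of $F$ lies in $F$, so it suffices to show that every sequence in $F$ has a sup-norm convergent subsequence.

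First I would fix a sequence $(x_j)_{j \in \omega}$ in $F$ and extract a subsequence converging coordinatewise. Since $|x_j(n)| \leq \alpha(n)$ for every $j$, the sequence $(x_j(n))_j$ ranges in the compact interval $[-\alpha(n), \alpha(n)]$ for each fixed $n$; a standard diagonal argument then yields a subsequence $(x_{j_k})_k$ and a point $y \in \mathbb R^\omega$ with $\lim_k x_{j_k}(n) = y(n)$ for each $n$. Passing to the limit in $|x_{j_k}(n)| \leq \alpha(n)$ gives $|y(n)| \leq \alpha(n)$, and since $\lim_n \alpha(n) = 0$ this shows $y \in \mathbf c_0$.

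The crux is to upgrade coordinatewise convergence to sup-norm convergence, and this is exactly where the hypothesis $\alpha(n) \to 0$ is used. Given $\varepsilon > 0$, choose $N$ with $\alpha(n) < \varepsilon / 2$ for all $n \geq N$; then for such $n$ we have $|x_{j_k}(n) - y(n)| \leq |x_{j_k}(n)| + |y(n)| \leq 2\alpha(n) < \varepsilon$ uniformly in $k$. For the finitely many indices $n < N$, coordinatewise convergence supplies a $K$ such that $|x_{j_k}(n) - y(n)| < \varepsilon$ whenever $k \geq K$. Combining the two estimates gives $\lVert x_{j_k} - y \rVert_{\rm sup} < \varepsilon$ for all $k \geq K$, so $x_{j_k} \to y$ in $\mathbf c_0$. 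Since $F$ is closed, $y \in F$, completing the extraction.

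I expect the only real obstacle to be the uniformity step in the previous paragraph: without the decay $\alpha(n) \to 0$, coordinatewise convergence would not force sup-norm convergence, since the tails could carry a persistent error. Thus the argument genuinely relies on the tail bound rather than on any compactness of the ambient space, while the diagonal extraction and the verification that the limit lies in $F$ are routine.
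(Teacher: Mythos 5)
Your proof is correct and follows essentially the same route as the paper's: a diagonal extraction yielding a coordinatewise convergent subsequence, followed by the tail estimate $|x_{j_k}(n) - y(n)| \leq 2\alpha(n) < \varepsilon$ for large $n$ (using $\alpha(n) \to 0$) combined with coordinatewise convergence on the finitely many remaining indices to upgrade to sup-norm convergence. Your write-up is in fact slightly more explicit than the paper's on two routine points, namely that the limit lies in $\mathbf c_0$ and, being a sup-norm limit, in the closed set $F$.
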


\begin{proof}
Suppose that $(x_i)_{i \in \omega}$ is a sequence of elements of $F$.  Let $i_0, i_1 , \ldots$ be a subsequence such that $(x_{i_n} (k))_{n \in \omega}$ is convergent, for each $k \in \omega$.  Such a subsequence may be obtained by succesively choosing subsequences to guarantee that $(x_{i_n} (j))_{n \in \omega}$ is Cauchy for all $j \leq k$ and taking $(i_n)_{n \in \omega}$ to be a pseudo-intersection of these subsequences.  Let $x \in \mathbf c_0$ be given by $x(k) = \lim_n x_{i_n} (k)$, for each $k$.  Note that $|x(k)| \leq \alpha (k)$, for each $k \in \omega$.

To see that $\lVert x_{i_n} - x \rVert_{\rm sup} \rightarrow 0$, as $n \rightarrow \infty$, fix $\varepsilon > 0$ and let $k_0$ be large enough that $| \alpha (k) | < \frac{\varepsilon}{2}$, for each $k \geq k_0$.  Let $n_0$ be large enough that $|x_{i_n}(k) - x(k)| < \varepsilon$, for each $n \geq n_0$ and $k< k_0$.  It follows that $\lVert x_{i_n} - x\rVert_{\rm sup} < \varepsilon$, for each $n \geq n_0$.
\end{proof}

\begin{proof}[Proof of Theorem~\ref{c_0}]
We consider each of the statements in Theorem~\ref{c_0} separately.

{\bf Compactly generated subgroups.}  Let $\langle K \rangle \subseteq \mathbb R^\omega$ be a universal compactly generated subgroup of $\mathbb R^\omega$.  (Such a subgroup exists by Theorem~\ref{T1}(1).)  With no loss of generality, we assume that the compact set $K$ contains $\bar 0$.  Let $\{ I_{n,p} : n,p \in \omega\}$ be intervals partitioning $\omega$ such that each $I_{n,p}$ has length $n$.  Define $K' \subseteq \mathbb R^\omega$ by 
\[
x \in K' \iff (\forall n,p) (x \upto I_{n,p} \in (1/np) K \upto n).
\]
Where $(1/np) K \upto n$ denotes the set of scalar multiples by $(1/np)$ of elements of $K \upto n$.  It follows from Lemma~\ref{lemming} that $K'$ is compact in $\mathbf c_0$.

We will show that $\langle K' \cap \mathbf c_0 \rangle$ is a universal compactly generated subgroup of $\mathbf c_0$.  Indeed, fix an arbitrary compact $A \subseteq \mathbf c_0$.  Since $A$ is also compact in $\mathbb R^\omega$, there is a continuous group homomorphism $\varphi : \mathbb R^\omega \rightarrow \mathbb R^\omega$ such that $\langle A \rangle = \varphi^{-1} (\langle K \rangle )$.\footnote{As noted earlier the Banach space topology of $\mathbf c_0$ refines the subspace topology inherited from $\mathbb R^\omega$ and hence compactness is ``preserved upwards.''}

For each $n \in \omega$, let $\tau (n) \in \omega\setminus \{0\}$ be such that, for every $x \in [-1,1]^\omega$ and $i < n$, we have $|\varphi (x) (i)| \leq \tau (n)$.  (Such $\tau(n)$ exist by the compactness of $[-1,1]^\omega$ and the continuity of $\varphi$.)  Define $\psi : \mathbb R^\omega \rightarrow \mathbb R^\omega$ by 
\[
\psi(x) \upto I_{n,p} = \begin{cases}
(1/np) \varphi (x) \upto n \quad &\mbox{if } p = \tau(n)^2\\
0^n &\mbox{otherwise}
\end{cases}
\]

\begin{claim1}
$\psi (\mathbf c_0) \subseteq \mathbf c_0$.
\end{claim1}

\proofclaim  Note that all continuous group homomorphisms of $\mathbb R^\omega$ are automatically linear, hence $\psi$ is linear.  Thus, to prove the claim, it will suffice to show that $\psi (x) \in \mathbf c_0$, for all $x \in \mathbf c_0$ with $\lVert x \rVert_{\rm sup} \leq 1$.  Fix such an $x$ and an $\varepsilon > 0$.  For $i \in \omega$, $\psi (x) (i) \neq 0$ only if $i \in I_{n , \tau(n)^2}$, for some $n$.  For $i \in I_{n ,\tau(n)^2}$, we have 
\begin{align*}
|\psi (x) (i)|
&\leq (1/n\tau(n)^2) \max_{j<n} |\varphi(x)(j)|\\
&\leq 1/n\tau(n)
\end{align*}
Thus $|\psi(x) (i)| \geq \varepsilon$ only if $i \in I_{n, \tau(n)^2}$ and $1/n\tau(n) \geq \varepsilon$.  There are only finitely many such $i$.

\begin{claim2}
For each $x \in \mathbf c_0$, we have $x \in \langle A \rangle \iff \psi(x) \in \langle K' \cap \mathbf c_0 \rangle$.
\end{claim2}

\proofclaim  To prove the claim, it will suffice to show that $\psi(x) \in \langle K' \cap \mathbf c_0 \rangle \iff \varphi (x) \in \langle K \rangle$, since we already have $x \in \langle A \rangle \iff \varphi(x) \in \langle K \rangle$.

Fix a group word $w$,
\begin{align*}
\varphi (x) \in w[K]
&\iff (\forall n) (\varphi (x) \upto n \in w[K] \upto n)\\
&\iff (\forall n) (\psi (x)  \upto I_{n , \tau(n)^2} \in (1/n\tau(n)^2)(w[K] \upto n))\\
&\iff \psi(x) \in w[K'].
\end{align*} 
The first and last ``$\iff$'' use the fact that $w[K]$ is closed (since $K$ is compact).  As $w$ was arbitrary, this completes the claim and proof.

{\bf $K_\sigma$ subgroups.}  Let $H = \bigcup_n K_n$ be a universal $K_\sigma$ subgroup of $\mathbb R^\omega$, as given by Theorem~\ref{T1}(1).  We may assume that 
\begin{equation}\label{E13}
(\bar 0 \in K_0) \mbox{ and } (\forall n) (-K_n = K_n \mbox{ and } K_n + K_n \subseteq K_{n+1}).
\end{equation}
Let $\{ I_{m,p} : m,p \in \omega\}$ be a family of intervals partitioning $\omega$ such that each $I_{m,p}$ has length $m$.  Define $K_n' \subseteq \mathbb R^\omega$ by 
\[
x \in K_n' \iff (\forall m,p) (x \upto I_{m,p} \in (1/mp) K_n \upto m)
\]
and let $H' = \bigcup K_n'$.  Again, Lemma~\ref{lemming} implies that each $K_n'$ is compact in $\mathbf c_0$.  Observe that \eqref{E13} holds for the $K_n'$ as well.  In particular, $H'$ is a subgroup of $\mathbb R^\omega$. We will show that $H' \cap \mathbf c_0$ is in fact a universal $K_\sigma$ subgroup of $\mathbf c_0$.

 Let $A = \bigcup_n A_n$ be an arbitrary $K_\sigma$ subgroup of $\mathbf c_0$.  Again, $A$ is still $K_\sigma$ in $\mathbb R^\omega$.  Hence there is a continuous homomorphism $\varphi : \mathbb R^\omega \rightarrow \mathbb R^\omega$ such that $\varphi^{-1} (H) = A$.  Let $\tau(m) \in \omega\setminus \{0\}$ be such that, for each $x \in [-1,1]^\omega$ and $i < m$, we have $|\varphi(x)(i)| \leq \tau(m)$.  Define $\psi : \mathbb R^\omega \rightarrow \mathbb R^\omega$ by 
\[
\psi(x) \upto I_{m,p} = \begin{cases}
(1/mp) \varphi(x) \upto m \quad &\mbox{if } p = \tau (m)^2,\\
0^m &\mbox{otherwise}.
\end{cases}
\] 
As in previous case, it follows that $\psi (\mathbf c_0) \subseteq \mathbf c_0$.  Finally, to see that $\psi^{-1} (H') = A$, it will suffice to show that 
\[
(\forall x \in \mathbf c_0) (\forall n) (\psi (x) \in K_n' \iff \varphi(x) \in K_n).
\]
To see this, observe that, for each $n$, 
\begin{align*}
\psi(x) \in K_n'
&\iff (\forall m) (\psi(x) \upto I_{m,\tau(m)^2} \in (1/m\tau(m)^2) K_n \upto m\\
&\iff (\forall m) (\varphi (x) \upto m \in K_n \upto m)\\
&\iff \varphi(x) \in K_n.
\end{align*}
\end{proof}

We now proceed to the main result of this section.  The following definition may be found at the beginning of Gowers-Maurey \cite{GOWERS-MAUREY.unconditional}.

\begin{definition}
Let $\mathfrak B$ be an infinite-dimensional Banach space (over $\mathbb R$).  An {\em unconditional basis} for $\mathfrak B$ is a set $\{ e_n \}_{n \in \omega} \subseteq \mathfrak B$ such that
\begin{enumerate}
\item each $e_n$ is a unit vector,
\item for each $x \in \mathfrak B$, there is a unique sequence $a_0 , a_1 , \ldots \in \mathbb R$ with $x = \sum_{n \in \omega} a_n e_n$ (convergence in norm) and
\item any permutation of $\{ e_n \}_{n \in \omega}$ still has the previous property.
\end{enumerate}
\end{definition}

The following fact (also mentioned in \cite{GOWERS-MAUREY.unconditional}) gives a useful property of unconditional bases.

\begin{proposition}[\cite{GOWERS-MAUREY.unconditional}, Theorem 1]\label{basis-bound}
If $\{ e_n \}_{n \in \omega}$ is an unconditional basis for a Banach space $\mathfrak B$, then there is a constant $C$ such that for each $x = \sum_{n \in \omega} a_n e_n \in \mathfrak B$ and $(\varepsilon_n)_{n \in \omega} \in \mathbb [-1,1]^\omega$, we have
\[
\Big\lVert \sum_{n \in \omega} \varepsilon_n a_n \Big\rVert \leq C \Big\lVert \sum_{n \in \omega} a_n e_n\Big\rVert.
\]
\end{proposition}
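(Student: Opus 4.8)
The plan is to prove this as the classical ``finiteness of the unconditional basis constant,'' using the Closed Graph Theorem together with the Uniform Boundedness Principle. Throughout I would write $e_n^*$ for the $n$th coordinate functional, so that $a_n = e_n^*(x)$ whenever $x = \sum_n a_n e_n$; since $\{e_n\}$ is in particular a Schauder basis, each $e_n^*$ is continuous and the partial-sum projections $P_N(\sum_n a_n e_n) = \sum_{n \le N} a_n e_n$ satisfy $K := \sup_N \lVert P_N \rVert < \infty$.

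First I would reduce the $[-1,1]$-valued statement to the case of signs $\varepsilon_n \in \{-1,+1\}$. Clause (3) of the definition of an unconditional basis (permutation-invariance) is exactly the assertion that each series $\sum_n a_n e_n$ converges unconditionally, and I would invoke the classical equivalence (Orlicz; see e.g.\ Diestel) between unconditional convergence and \emph{sign convergence}: for every $x = \sum_n a_n e_n$ and every $\varepsilon \in \{-1,+1\}^\omega$, the series $\sum_n \varepsilon_n a_n e_n$ converges. Thus for each sign sequence $\varepsilon$ the multiplier $T_\varepsilon\colon x \mapsto \sum_n \varepsilon_n a_n e_n$ is a well-defined linear map on $\mathfrak B$.

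Next I would establish boundedness in two stages. To see that a fixed $T_\varepsilon$ is bounded, I would apply the Closed Graph Theorem: if $x_k \to x$ and $T_\varepsilon x_k \to y$, then applying the continuous functional $e_n^*$ gives $e_n^*(y) = \varepsilon_n e_n^*(x) = e_n^*(T_\varepsilon x)$ for every $n$, whence $y = T_\varepsilon x$ and the graph is closed. To make the family $\{T_\varepsilon\}$ \emph{uniformly} bounded, I would first check it is pointwise bounded: for fixed $x$ the classical uniform bound attached to an unconditionally convergent series gives $\sup_\varepsilon \lVert T_\varepsilon x \rVert = \sup_\varepsilon \lVert \sum_n \varepsilon_n a_n e_n \rVert < \infty$. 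The Uniform Boundedness Principle then yields $C_0 := \sup_\varepsilon \lVert T_\varepsilon \rVert < \infty$.

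Finally I would pass from signs to coefficients in $[-1,1]$ by convexity. Fix $x = \sum_n a_n e_n$ and $\theta \in [-1,1]^\omega$; unconditional convergence guarantees that $\sum_n \theta_n a_n e_n$ converges, so its norm is the limit of the truncations $\lVert \sum_{n \le N} \theta_n a_n e_n \rVert$. For fixed $N$ the map $t \mapsto \sum_{n \le N} t_n a_n e_n$ is affine on the cube $[-1,1]^{N+1}$, so $\sum_{n \le N} \theta_n a_n e_n$ is a convex combination of the vertex values $\sum_{n \le N} \varepsilon_n a_n e_n = P_N(T_{\tilde\varepsilon} x)$, where $\varepsilon$ ranges over sign choices on $\{0,\dots,N\}$ and $\tilde\varepsilon$ is any sign sequence extending $\varepsilon$; each such value has norm at most $K C_0 \lVert x \rVert$. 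Taking the maximum over vertices and then letting $N \to \infty$ gives $\lVert \sum_n \theta_n a_n e_n \rVert \le K C_0 \lVert x \rVert$, so the proposition holds with $C = K C_0$. The step I expect to be the genuine obstacle — everything else being soft functional analysis — is the bundle of classical facts about unconditional convergence invoked in the first two paragraphs: that the permutation definition forces sign convergence, and that an unconditionally convergent series carries the finite uniform bound needed for pointwise boundedness. These are precisely the inputs that make the Uniform Boundedness Principle applicable; once they are in hand, the Closed Graph/Banach--Steinhaus/convexity machinery is routine. Since the paper cites this as Theorem~1 of \cite{GOWERS-MAUREY.unconditional}, in practice I would simply record these equivalences with a reference rather than reprove them.
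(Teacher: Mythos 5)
Your proof is correct, but there is nothing in the paper to compare it against: the paper does not prove this proposition at all --- it imports it verbatim as Theorem~1 of the cited Gowers--Maurey paper, and the statement (with its label \texttt{basis-bound}) functions purely as a black box used later to build the maps $T_1$ and $T_2$ into and out of $\mathbf c_0$. What you have written is the standard textbook argument for the finiteness of the unconditional basis constant (as in Lindenstrauss--Tzafriri, Proposition~1.c.1): Closed Graph to make each sign-multiplier $T_\varepsilon$ bounded, Banach--Steinhaus to make the family uniformly bounded, and an extreme-point/convexity argument on the cube $[-1,1]^{N+1}$ to pass from signs to general multipliers, yielding $C = K C_0$. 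All the steps check out, including the two places you correctly flag as the real content: (i) that clause~(3) of the paper's definition (every permutation of $\{e_n\}$ is again a basis) yields unconditional and hence sign convergence of every expansion --- note this does require the small argument that the expansion of $x$ in the permuted basis has coefficients $a_{\pi(n)}$, which follows by applying the continuous coordinate functionals $e_m^*$; and (ii) the uniform bound on sign sums of a fixed unconditionally convergent series, which is what makes the pointwise hypothesis of Banach--Steinhaus available. Citing these classical equivalences rather than reproving them is exactly what the paper itself does, only at the level of the whole proposition; one minor remark is that the displayed inequality in the paper's statement has a typo (the left-hand sum should read $\sum_n \varepsilon_n a_n e_n$), which you silently and correctly repaired.
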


The following lemma is consequence of this fact.

\begin{lemma}
If $\mathfrak B$ is an infinite-dimensional Banach space with an unconditional basis, then there are continuous linear maps $T_1 : \mathfrak B \rightarrow \mathbf c_0$ and $T_2 : \mathbf c_0 \rightarrow \mathfrak B$.
\end{lemma}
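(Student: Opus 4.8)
The plan is to build both maps explicitly from the normalized basis $\{e_n\}_{n\in\omega}$, taking $T_1$ to be the coefficient map and $T_2$ a suitably weighted inclusion, and then to read off continuity (and injectivity) from the basis constant of Proposition~\ref{basis-bound} and from absolute convergence, respectively.

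For $T_1$, given $x = \sum_{n\in\omega} a_n e_n \in \mathfrak{B}$ I would set $T_1(x) = (a_n)_{n\in\omega}$. First I would check that $T_1(x)$ really lies in $\mathbf{c}_0$: since $\sum_n a_n e_n$ converges in norm, its general term $a_n e_n$ tends to $0$, and because each $e_n$ is a unit vector this is exactly $|a_n| = \lVert a_n e_n\rVert \to 0$. For continuity I would apply Proposition~\ref{basis-bound} with the sequence $\varepsilon_m = 1$ when $m = n$ and $\varepsilon_m = 0$ otherwise; this gives $|a_n| = \lVert a_n e_n\rVert \le C\lVert x\rVert$ for every $n$, whence $\lVert T_1(x)\rVert_{\rm sup} = \sup_n|a_n| \le C\lVert x\rVert$. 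Linearity is immediate from the uniqueness of the coefficients, so $T_1$ is bounded and linear, hence continuous; it is moreover injective, since $T_1(x) = \bar 0$ forces every $a_n = 0$ and therefore $x = 0$.

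For $T_2$, given $b = (b_n)_{n\in\omega} \in \mathbf{c}_0$ I would set $T_2(b) = \sum_{n\in\omega} 2^{-n} b_n e_n$. This series converges absolutely, and hence converges in the Banach space $\mathfrak{B}$, because
\[
\sum_{n\in\omega} \lVert 2^{-n} b_n e_n\rVert = \sum_{n\in\omega} 2^{-n}|b_n| \le \lVert b\rVert_{\rm sup}\sum_{n\in\omega} 2^{-n} = 2\lVert b\rVert_{\rm sup} < \infty.
\]
The same estimate yields $\lVert T_2(b)\rVert \le 2\lVert b\rVert_{\rm sup}$, so $T_2$ is bounded and linear, thus continuous, and injectivity follows from the uniqueness clause in the definition of a basis: $T_2(b) = 0$ forces $2^{-n}b_n = 0$ for all $n$, i.e.\ $b = \bar 0$.

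There is no serious obstacle here; the only points needing care are exactly the two I flagged. For $T_1$, the subtle step is that the coefficient map lands in $\mathbf{c}_0$ and not merely in $\ell^\infty$, which is why I separate the ``general term tends to zero'' observation (requiring only that the $e_n$ are normalized) from the uniform bound $|a_n|\le C\lVert x\rVert$ (which is where unconditionality, through Proposition~\ref{basis-bound}, genuinely enters). For $T_2$, the only thing to secure is convergence of the defining series, and the geometric weights $2^{-n}$ handle this via absolute convergence, so that $T_2$ in fact uses nothing beyond a normalized Schauder basis. Finally, since this lemma is meant to feed Proposition~\ref{P1} (where the bare existence of continuous linear maps would be vacuous, as the zero maps qualify), I would record explicitly that both $T_1$ and $T_2$ are injective, so that $\mathfrak{B}$ and $\mathbf{c}_0$ are bi-embeddable in the weak sense of that proposition.
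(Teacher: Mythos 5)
Your proof is correct, and your maps $T_1$ and $T_2$ are exactly the ones the paper uses; for $T_1$ your argument is identical to the paper's (coefficient map, membership in $\mathbf c_0$ via the Cauchy tail of the series, continuity from Proposition~\ref{basis-bound} with indicator sequences). Where you diverge is the continuity of $T_2$: the paper invokes Proposition~\ref{basis-bound} again, bounding $\lVert T_2((a_n))\rVert \leq C\lVert \sum_n 2^{-n}e_n\rVert$ for $\lVert (a_n)\rVert_{\rm sup}\leq 1$, whereas you use absolute convergence and the triangle inequality to get the explicit bound $\lVert T_2(b)\rVert \leq 2\lVert b\rVert_{\rm sup}$. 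Your route is more elementary and, as you correctly flag, shows that $T_2$ needs nothing beyond a normalized Schauder basis (indeed it also justifies the well-definedness of the series, which the paper merely asserts from boundedness of $(a_n)$); unconditionality enters only through $T_1$. Your final remark is also a genuine improvement in precision: the lemma as stated asks only for continuous linear maps, but its intended use in Proposition~\ref{P1} requires \emph{injective} homomorphisms, and recording that both $T_1$ and $T_2$ are injective (via the uniqueness of basis expansions) closes that small gap in the paper's exposition.
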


\begin{proof}
Let $\{ e_n \}_{n \in \omega}$ be an unconditional basis for $\mathfrak B$, with $C$ as in the previous proposition.  

We first show the existence of the map $T_1 : \mathfrak B \rightarrow \mathbf c_0$.  Define $T_1 : \mathfrak B \rightarrow \mathbf c_0$ by $T_1 (\sum_n a_n e_n) = (a_n)_{n \in \omega}$.  Since the sum $\sum_n a_n e_n$ is convergent, the sequence of partial sums is Cauchy.  Hence the norm of the $n$th term converges to 0.  It follows that $T_1$ maps $\mathfrak B$ into $\mathbf c_0$.  We must now see that $T_1$ is continuous.  Since $T_1$ is linear, it will suffice to show that $T_1$ is continuous at the zero element of $\mathfrak B$.  Fix $x = \sum_n a_n e_n \in \mathfrak B$.  For each $n$, let $\varepsilon_n = 1$ and $\varepsilon_k = 0$, for $k \neq n$, and observe that 
\[
|a_n| = \lVert a_n e_n \rVert = \Big\lVert \sum_{n \in \omega} \varepsilon_n a_n \Big\rVert \leq C \lVert x \rVert.
\]
Thus $\lVert T_1(x) \rVert_{\rm sup} \leq C \lVert x \rVert$, showing that $T_1$ is continuous at $0 \in \mathfrak B$.

We now proceed to the second claim.  Define $T_2 : \mathbf c_0 \rightarrow \mathfrak B$ by $T_2 ((a_n)_{n \in \omega}) = \sum_n \frac{a_n}{2^n} e_n$.  Since $(a_n)_{n \in \omega}$ is a bounded sequence, this latter sum is always well-defined.  To see that $T_2$ is continuous, observe that, if $\lVert (a_n)_{n \in \omega} \rVert_{\rm sup} \leq 1$, then by Proposition~\ref{basis-bound} 
\[
\lVert T_2 ((a_n)_{n \in \omega}) \rVert = \Big \lVert \sum_n {\textstyle \frac{a_n}{2^n}} e_n \Big \rVert \leq C \Big \lVert \sum_n {\textstyle \frac{1}{2^n}} e_n \Big\rVert.
\]
Thus $T_2$ is a bounded linear map and hence continuous.
\end{proof}

Combining this lemma with Proposition~\ref{P1}, we obtain the following theorem.

\begin{theorem}\label{unconditional}
Let $\mathfrak B$ be an infinite-dimensional Banach space with an unconditional basis.  Then $\mathfrak B$ has universal compactly generated and $K_\sigma$ subgroups.
\end{theorem}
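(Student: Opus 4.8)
The plan is to deduce the theorem directly from Proposition~\ref{P1}, taking $G_1 = \mathbf c_0$ and $G_2 = \mathfrak B$, with $\mathcal C$ being in turn the class of compactly generated subgroups and the class of $K_\sigma$ subgroups. The preceding lemma already furnishes continuous linear maps $T_1 : \mathfrak B \rightarrow \mathbf c_0$ and $T_2 : \mathbf c_0 \rightarrow \mathfrak B$, and these will play the roles of the two continuous homomorphisms required in the hypothesis of Proposition~\ref{P1} (namely $\varphi_2 = T_1$ and $\varphi_1 = T_2$, after observing that a linear map between Banach spaces is in particular a continuous homomorphism of the underlying additive groups). Since Theorem~\ref{c_0} asserts that $\mathbf c_0$ has universal compactly generated and $K_\sigma$ subgroups, the conclusion for $\mathfrak B$ follows at once, provided the hypotheses of Proposition~\ref{P1} are met.

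First I would confirm the two closure conditions on the classes $\mathcal C$. Both are routine: under a continuous homomorphism the image of a compact set is compact and the image of a subgroup is a subgroup, so a continuous homomorphic image of a $K_\sigma$ subgroup is again $K_\sigma$; and since $\varphi(\langle K \rangle) = \langle \varphi(K) \rangle$ for any homomorphism $\varphi$, a continuous homomorphic image of a compactly generated subgroup is again compactly generated.

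The one point genuinely requiring attention is that Proposition~\ref{P1} demands \emph{injective} maps, whereas the lemma as stated produces only continuous linear ones. I would therefore verify injectivity of the specific maps constructed in the lemma. For $T_1(\sum_n a_n e_n) = (a_n)_{n \in \omega}$, injectivity is immediate from the uniqueness of the coefficient sequence in the expansion of an element over an unconditional basis: if the image sequence vanishes, then every $a_n = 0$ and so $x = 0$. For $T_2((a_n)_{n \in \omega}) = \sum_n \frac{a_n}{2^n} e_n$, if this sum equals the zero vector then, again by uniqueness of representation, each $\frac{a_n}{2^n} = 0$, whence $a_n = 0$ for all $n$. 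Thus both maps are injective, and the weak bi-embeddability hypothesis of Proposition~\ref{P1} is satisfied.

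I do not anticipate a real obstacle here: the substantive work has already been carried out in Theorem~\ref{c_0} (the existence of universal subgroups of $\mathbf c_0$) and in the preceding embedding lemma. The only care needed is the injectivity observation above and the recognition that the ambient group structure for $\leqg$ is the additive one. With injectivity in hand, Proposition~\ref{P1} applies in both cases and yields the theorem; it is precisely because $T_1, T_2$ need not be isomorphisms onto their ranges that the weak form of bi-embeddability in Proposition~\ref{P1}, rather than a genuine isomorphic bi-embedding, is the appropriate tool.
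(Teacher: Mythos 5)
Your proposal is correct and is essentially the paper's own proof: the paper likewise obtains Theorem~\ref{unconditional} by combining the embedding lemma with Proposition~\ref{P1} and Theorem~\ref{c_0}. Your explicit verification that $T_1$ and $T_2$ are injective (via uniqueness of basis expansions) is a detail the paper leaves implicit, and it is exactly the right point to check, since Proposition~\ref{P1} genuinely requires injectivity.
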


\begin{remark*}
To put this theorem in context, recall that (among many others) all $\ell^p$ spaces ($1 \leq p < \infty$) have unconditional bases.  (On the other hand, Per Enflo \cite{ENFLO.approximation.problem} and later Gowers-Maurey \cite{GOWERS-MAUREY.unconditional} showed that there exist separable Banach spaces with no unconditional bases.)
\end{remark*}

The following serves as an addendum to the last theorem.

\begin{theorem}\label{moreBanach}
The following Banach spaces (viewed as topological groups) have universal compactly generated and $K_\sigma$ subgroups:
\begin{enumerate}
\item $\ell^\infty$,
\item $C(X)$, if $X$ is infinite, Polish and compact, and
\item $C_0 (X)$, if $X$ is infinite, Polish and locally compact.
\end{enumerate}
\end{theorem}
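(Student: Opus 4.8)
The plan is to prove all three statements uniformly by exhibiting, for each of the spaces $\ell^\infty$, $C(X)$ and $C_0(X)$, a continuous injective group homomorphism into $\mathbf c_0$ and one out of $\mathbf c_0$, and then invoking Proposition~\ref{P1} with $G_1 = \mathbf c_0$. Since $\mathbf c_0$ has universal compactly generated and universal $K_\sigma$ subgroups by Theorem~\ref{c_0}, and since both classes are closed under continuous homomorphic images (the continuous image of a compact set is compact, and the continuous image of a $K_\sigma$ set is $K_\sigma$), Proposition~\ref{P1} will transfer the existence of a universal $\mathcal C$ subgroup from $\mathbf c_0$ to each target space. Note that Proposition~\ref{P1} is stated for arbitrary topological groups, so the non-separability of $\ell^\infty$ causes no difficulty. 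All the maps below are $\mathbb R$-linear, hence homomorphisms of the underlying additive groups.

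For the maps into $\mathbf c_0$ I would use a simple weighting. For $\ell^\infty$, the map $T(x)(n) = x(n)/(n+1)$ satisfies $\lVert T(x)\rVert_{\rm sup} \le \lVert x \rVert_{\rm sup}$, is injective, and has range in $\mathbf c_0$ because $|x(n)|/(n+1) \le \lVert x \rVert_{\rm sup}/(n+1) \to 0$. For $C(X)$ and $C_0(X)$, fix a countable dense set $\{ a_n \}_{n \in \omega} \subseteq X$ (available since $X$ is Polish, hence separable) and set $T(f)(n) = f(a_n)/(n+1)$; this is linear, bounded by $\lVert f \rVert_{\rm sup}$, lands in $\mathbf c_0$ (every such $f$ is bounded), and is injective since $\{ a_n\}$ is dense and $f$ is continuous.

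For the maps out of $\mathbf c_0$, the case of $\ell^\infty$ is immediate, as the inclusion $\mathbf c_0 \hookrightarrow \ell^\infty$ is a continuous injective homomorphism. For $C(X)$ and $C_0(X)$ I would use disjoint bump functions. The key geometric input is that an infinite (locally) compact Polish space $X$ contains distinct points $x_n$ with pairwise disjoint, relatively compact open sets $U_n \ni x_n$: when $X$ is compact it is not discrete, so it has a non-isolated point and hence a convergent sequence of distinct points around which one shrinks balls (whose closures are automatically compact) to get the $U_n$; when $X$ is non-compact and locally compact, one builds the $U_n$ by recursion, using at each stage that the complement of the compact set $\cl(U_0) \cup \cdots \cup \cl(U_{k-1})$ is nonempty and choosing $x_k$ there with a relatively compact neighborhood disjoint from the earlier ones. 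Urysohn's lemma then provides $g_n \in C_c(X)$ with $0 \le g_n \le 1$, $g_n(x_n) = 1$ and $\supp(g_n) \subseteq U_n$. The map $S((b_n)_n) = \sum_n 2^{-n} b_n g_n$ is well defined into $C_0(X)$ (resp.\ $C(X)$): since the supports are disjoint the partial sums converge uniformly, with tails bounded by $2^{-N}\lVert (b_n) \rVert_{\rm sup}$, and a uniform limit of compactly supported functions lies in $C_0(X)$. It is linear, bounded, and injective, since $S((b_n))(x_n) = 2^{-n} b_n$.

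The main obstacle is the geometric construction just sketched, namely producing infinitely many pairwise disjoint relatively compact open sets on which to hang the bump functions; once these are in hand, the convergence, continuity and injectivity of $S$ are routine. With both homomorphisms constructed in each case, Proposition~\ref{P1} applied with $G_1 = \mathbf c_0$ completes the proof. (In (3), if $X$ happens to be compact one may instead simply note $C_0(X) = C(X)$ and appeal to (2).)
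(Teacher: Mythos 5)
Your proposal is correct and follows essentially the same route as the paper: both rely on Proposition~\ref{P1} together with Theorem~\ref{c_0}, using weighted evaluation maps (at a dense sequence, scaled by $1/(n+1)$) to inject each space into $\mathbf c_0$, and weighted series of bump functions $\sum_n 2^{-n} b_n g_n$ to inject $\mathbf c_0$ back. The only difference is cosmetic: the paper builds its bump functions via the Tietze Extension Theorem on a discrete sequence of points (and says ``use the same functions'' for $C_0(X)$), whereas you use Urysohn functions with pairwise disjoint relatively compact supports, which handles the locally compact case somewhat more explicitly but is not a different argument.
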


\begin{remark*}
In general, the spaces listed in this theorem may not have unconditional bases ($\ell^\infty$ is not even separable) and so Theorem~\ref{unconditional} does not necessarily apply.
\end{remark*}

\begin{proof}[Proof of Theorem~\ref{moreBanach}]
In each case, we will apply Proposition~\ref{P1} and Theorem~\ref{c_0}.

1.  The injection $\mathbf c_0 \rightarrow \ell^\infty$ is via the inclusion map, while the injection $\ell^\infty \rightarrow \mathbf c_0$ is by means of the map $(a_n)_{n \in \omega} \mapsto ((1/n)a_n)_{n \in \omega}$.

2.  Let $\{ x_n \}_{n \in \omega}$ be a discrete sequence of distinct points in $X$.  For each $n$, let $f_n \in C(X)$ have sup-norm 1 and be such that $f_n(x_n) = 1$ and $f_n(x_k) = 0$, if $k \neq n$.  Such functions exist by the Tietze Extension Theorem.  Then $\mathbf c_0$ may be one-to-one homomorphically mapped into $C(X)$ via the continuous function $x \mapsto \sum_{n \in \omega} (x(n) / 2^n) f_n$.

Let $\{ y_n \}_{n \in \omega}$ be a countable dense subset of $X$.  Then $C(X)$ is injected into $\mathbf c_0$ via the map $f \mapsto ((1/n)f(y_n))_{n \in \omega}$.

3.  Use the same functions as in 2.
\end{proof}

\subsection{A negative example}  The following example gives our only instances of perfect Polish groups without universal subgroups in either of the classes we consider.  The key fact is that any nontrivial group homomorphism of $\mathbb R^n$ is in fact an automorphism.

\begin{example}
By Theorem~\ref{T1} there is a universal $K_\sigma$ subgroup of $\mathbb R^\omega$.  On the other hand, we shall see that there is no universal $K_\sigma$ subgroup of $\mathbb R^n$, for $n \in \omega$.  First, if $\varphi : \mathbb R^n \rightarrow \mathbb R^n$ is a continuous group homomorphism, then $\varphi$ is automatically a linear transformation.  To see this, observe that, since $\varphi$ is a group homomorphism, one can show that $\varphi (q\mathbf r) = q \varphi (\mathbf r)$, for any $q \in \mathbb Q$ and $\mathbf r \in \mathbb R^n$.  One then concludes that $\varphi (a\mathbf r) = a \varphi (\mathbf r)$, for any $a \in \mathbb R$, by the density of $\mathbb Q$ in $\mathbb R$ and the continuity of $\varphi$.

Towards a contradiction, suppose that $H_0 \subseteq \mathbb R^n$ is a universal $K_\sigma$ subgroup of $\mathbb R^n$.  Let $\tilde A, \tilde B \subsetneq \mathbb R$ be nontrivial $K_\sigma$ subgroups such that $\tilde A$ is countable and $\tilde B$ is uncountable.  Let 
\[  
A = \{ (x_1 , \ldots , x_n ) \in \mathbb R^n : x_1 \in \tilde A \ \& \ x_2 = x_3 = \ldots = x_n = 0\}
\] 
and
\[  
B = \{ (x_1 , \ldots , x_n ) \in \mathbb R^n : x_1 \in \tilde B \ \& \ x_2 = x_3 = \ldots = x_n = 0\}.
\] 

$A$ and $B$ are $K_\sigma$ subgroups of $\mathbb R^n$ that contain no linear (over $\mathbb R$) subspaces of $\mathbb R^n$ other than $\{ 0^n \}$.  Let $\varphi_A$, $\varphi_B$ be continuous endomorphisms of $\mathbb R^n$ reducing $A$, $B$ to $H_0$.  As $\varphi_A$ and $\varphi_B$ are actually linear transformations, $\ker (\varphi_A)$ and $\ker (\varphi_B)$ are linear subspaces of $\mathbb R^n$.  Since $\varphi_A$ and $\varphi_B$ are reductions between subgroups, we must have that $\ker (\varphi_A )\subseteq A$ and $\ker (\varphi_B) \subseteq B$, in particular, both kernels are trivial.  Hence $\varphi_A$ and $\varphi_B$ are actually automorphisms.  Thus $A$ and $B$ have the same cardinality, a contradiction.

By the same reasoning, there are no universal compactly generated or $K_\sigma$ subgroups of $\mathbb R^n$.
\end{example}

%%%%%%%%%%%%%%%%%%%%%%%%%%%%%%%%%%%%%%

%%%%%%%%%%%%%%%%%%%%%%%%%%%%%%%%%%%%%%

\section{An application to ideals}\label{S8}

Recall that an \emph{ideal} on $\omega$ is a set $\mathcal I \subseteq \mathcal P (\omega)$ that is closed under finite unions and closed downwards (i.e., if $x \subseteq y \in \mathcal I$, then $x \in \mathcal I$).  Also recall that $\mathcal P(\omega)$ becomes a Polish group when equipped with the addition operation
\[
x \btu y = (x \setminus y ) \cup (y \setminus x).
\]
In particular, every ideal is a subgroup of $\mathcal P (\omega)$, since $x \btu y \subseteq x \cup y$, for $x,y \subseteq \omega$.

By identifying each $x \subseteq \omega$ with its characteristic function, one can regard $(\mathcal P(\omega) , \btu)$ as $(\mathbb Z_2^\omega , +)$.  With this identification, the relation $x \subseteq y$ agrees with the pointwise $x \leq y$.  We use the latter when dealing with $\mathbb Z_2^\omega$ to avoid confusion with the ``$\,\subset\,$'' (extension) relation on $\mathbb Z_2^{<\omega}$.

In this section, we study the following weak form of Rudin-Keisler reduction.

\begin{definition}\label{weakRK}
For ideals $\mathcal I, \mathcal J$ on $\omega$, we write $\mathcal I \leqRK^+ \mathcal J$ if, and only if, there is a subset $A \subseteq \omega$ and a function $\beta : A \rightarrow \omega$ such that $x \in \mathcal I \iff \beta^{-1}(x) \in \mathcal J$, for each $x \subseteq \omega$.\footnote{We use the notation $\leqRK^+$ as a parallel with $\leqRB$ versus $\leqRB^+$.  See Kanovei \cite[pp.~41-42]{KAN.borel.equiv} for definitions.}
\end{definition}

Theorem~\ref{univeffideal} will use the method of Theorem~\ref{T1} to show that there is a $\leqRK^+$-complete $F_\sigma$ ideal.  In a personal communication, Michael Hru\u s\' ak has informed us that, though unpublished, this result is already known to him, albeit in a slightly different form.\footnote{See Hru\u s\' ak \cite[5.4]{HRUSAK.combinatorics.filters} for a similar result.}

The only difference between $\leqRK^+$ and the usual Rudin-Keisler order is that the reducing map in the case of $\leqRK^+$ need not be defined on all of $\omega$.  As with Rudin-Keisler reduction, if $\mathcal I \leqRK^+ \mathcal J$ and $\mathcal J$ is an ideal, then $\mathcal I$ is an ideal as well.  We call a map $\beta$ as in the definition above a \emph{weak RK-reduction.}  Observe that the map 
\[
x \mapsto \beta^{-1} (x)
\]
defines a continuous homomorphism of $\mathcal P (\omega)$ (equivalently, of $\mathbb Z_2^\omega$).  This implies that, for ideals $\mathcal I , \mathcal J$, if $\mathcal I \leqRK^+ \mathcal J$, then automatically $\mathcal I \leqg \mathcal J$.

Before proceeding, we verify that $\leqRK^+$ is indeed weaker than $\leqRK$.  Consider the following example.

\begin{example} For $x \subseteq \omega$, let 
\[
\mathrm{Fin}(x) = \{ y \in \mathcal P(\omega) : y \mbox{ is finite and } y \subseteq x \}.
\]
With this notation, the ideal $\mathrm{Fin}$ is $\mathrm{Fin}(\omega)$.  If $x$ is infinite, then any bijection $\beta : x \longleftrightarrow \omega$ witnesses $\mathrm{Fin} \leqRK^+ \mathrm{Fin}(x)$.  On the other hand, if $x \neq \omega$, then $\mathrm{Fin} \nleq_{\rm RK} \mathrm{Fin}(x)$. To see this, suppose otherwise and let $\beta : \omega \rightarrow \omega$ be such for each $y \subseteq \omega$, $y \in \mathrm{Fin} \iff \beta^{-1}(y) \in \mathrm{Fin}(x)$.  Let $a \in \omega \setminus x$ and let $b = \beta(a)$.  We have $\{ b \} \in \mathrm{Fin}$, but $\beta^{-1}(\{ b \}) \notin \mathrm{Fin}(x)$, since $a \in \beta^{-1}(\{ b \})$ and $a \notin x$.
\end{example}

We also remark on the fact that $\leqg$ is weaker than $\leqRK^+$.

\begin{example} Consider $H = \{ \emptyset , \{ 0,1 \} \}$ and the ideal $\mathrm{Fin}$.  Both are subgroups of $(\mathcal P(\omega) , \btu)$ and $H \leqg \mathrm{Fin}$, via the map $\varphi : \mathcal P(\omega) \rightarrow \mathcal P(\omega)$ defined by 
\[ 
\varphi (x) = \begin{cases}
\emptyset \quad &\mbox{if } 0,1 \in x \mbox{ or both } 0,1 \notin x,\\
\omega &\mbox{otherwise.}
\end{cases}
\]
It is easier to see that this is a group homomorphism by viewing $\mathcal P(\omega)$ as $\mathbb Z_2^\omega$.  With this identification, $\varphi$ is given by 
\[ 
\varphi (x)(n) = x(0) + x(1),
\]
for all $x \in \mathbb Z_2^\omega$ and $n \in \omega$.

On the other hand, we cannot have $H \leqRK^+ \mathrm{Fin}$, since this would imply that $H$ is an ideal.
\end{example}

We now proceed to the main result of this section.

\begin{theorem}\label{univeffideal}
There is a $\leqRK^+$-complete $F_\sigma$ ideal in $\mathbb Z_2^\omega$.
\end{theorem}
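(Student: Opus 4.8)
The plan is to re-run the construction from the proof of Theorem~\ref{T1}(2), specialized to the finite group $\mathbb Z_2$, making two adjustments: restricting every finite ``coding piece'' to a downward-closed set so that the universal subgroup becomes an ideal, and observing that the reducing homomorphisms that method produces are automatically weak RK-reductions. First I would set up the dictionary between the two pictures. Viewing $\mathcal P(\omega)$ as $\mathbb Z_2^\omega$, an ideal is exactly a downward-closed subgroup: an ideal is plainly such a subgroup, and conversely if $\mathcal I$ is a downward-closed subgroup and $x,y \in \mathcal I$, then $x \cap y \leq x$ gives $x \cap y \in \mathcal I$, whence $x \cup y = (x \btu y) \btu (x \cap y) \in \mathcal I$. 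Next, a weak RK-reduction $\beta : A \to \omega$ (with $A \subseteq \omega$) induces the homomorphism $\varphi_\beta(x)(a) = x(\beta(a))$ for $a \in A$ and $\varphi_\beta(x)(a) = 0$ otherwise; that is, weak RK-reductions are precisely the continuous homomorphisms of $\mathbb Z_2^\omega$ each of whose output coordinates either copies a single input coordinate or is constantly $0$. The crucial observation is that every homomorphism built in the proofs of Theorem~\ref{ucg} and Theorem~\ref{T1} is of exactly this coordinate-copying form (each block $\varphi(x)\upto I^k_j$ is either $x \upto k$ or $0^k$), and this class is closed under composition and under the coordinate re-indexings used there. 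Hence any reduction obtained from the method of Theorem~\ref{T1} is automatically a weak RK-reduction, and the whole problem reduces to producing a universal $K_\sigma$ subgroup of $\mathbb Z_2^\omega$ that is also an ideal.

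To force the universal object to be an ideal, I would run the proof of Theorem~\ref{T1}(2) for $G_n = \mathbb Z_2$ but enumerate only the downward-closed finite sets $A^k_j \subseteq \mathbb Z_2^k$ containing $0^k$. The key lemma is: if $D \subseteq \mathbb Z_2^\omega$ is downward closed and contains $\bar 0$, then $\langle D\rangle$ is downward closed. Indeed, if $z = x_1 \btu \cdots \btu x_m$ with each $x_i \in D$ and $w \leq z$, then $w \cap x_i \leq x_i$ gives $w \cap x_i \in D$, and a coordinatewise parity count shows $w = (w \cap x_1) \btu \cdots \btu (w \cap x_m) \in \langle D\rangle$. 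With the $A^k_j$ downward closed, the universal compact set $K_0$ of Corollary~\ref{loki product} is downward closed, the sets $A_n$ of the Theorem~\ref{T1}(2) construction are products of downward-closed sets and hence downward closed, and so by the lemma each $\langle A_n\rangle$, as well as the increasing union $\mathcal I := \bigcup_n \langle A_n\rangle$, is downward closed. Since $\mathbb Z_2^\omega$ is compact, the auxiliary set $\tilde A$ and the bound-tracking maps $x \mapsto x^\ast$ of that proof are unnecessary and may be dropped, leaving $\mathcal I$ already $F_\sigma$. Thus $\mathcal I$ is a downward-closed $F_\sigma$ subgroup, i.e., an $F_\sigma$ ideal.

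For completeness I would take an arbitrary $F_\sigma$ ideal $\mathcal J$ and write $\mathcal J = \bigcup_n K_n$ with $K_n$ downward-closed compact, increasing, and $\bar 0 \in K_0$; this is possible because $\mathcal J$ is downward closed, so replacing a compact exhaustion by its successive downward closures keeps the pieces compact and inside $\mathcal J$. Restricting the coding to downward-closed pieces loses nothing here, since the finite traces $T \cap \mathbb Z_2^k$ of each downward-closed $K_n$ are themselves downward closed and hence occur among the $A^k_j$, so the requisite indices $\tau(k)$ still exist. The reduction $\psi$ built in the proof of Theorem~\ref{T1}(2) then satisfies $\psi^{-1}(\mathcal I) = \mathcal J$, and by the observation above $\psi$ is coordinate-copying, so it is the homomorphism $\varphi_\beta$ of a weak RK-reduction $\beta$ in the sense of Definition~\ref{weakRK}; therefore $\mathcal J \leqRK^+ \mathcal I$, establishing that $\mathcal I$ is $\leqRK^+$-complete for $F_\sigma$ ideals.

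I expect the main obstacle to be the bookkeeping that keeps downward-closedness compatible with the two operations of the construction -- passing to the generated subgroup and forming $\bigcup_n \langle A_n\rangle$ -- while verifying that the restricted (downward-closed) coding still captures every $F_\sigma$ ideal. The genuinely analytic part, namely the compactness-and-limit argument for the hard inclusion $\psi(x) \in \mathcal I \Rightarrow x \in \mathcal J$, is inherited essentially verbatim from Theorem~\ref{T1}(2); the only new content is the downward-closure lemma together with the verification that the inherited reductions have the coordinate-copying form demanded by $\leqRK^+$.
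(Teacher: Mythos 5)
Your proposal is correct, and it rests on the same interval-coding method as the paper (which itself introduces Theorem~\ref{univeffideal} as an application of the method of Theorem~\ref{T1}); still, the execution differs enough to be worth comparing. The paper does not re-run the two-layer construction of Theorem~\ref{T1}(2): it gives a self-contained one-step coding in which the finite pieces $A^k_s$ are indexed by finite sequences $s \in \omega^{<\omega}$ and satisfy an extension property (every downward-closed superset of $A^k_s$ occurs as some $A^k_{s \concat i}$), so that a single branch $\alpha_k$ codes the entire increasing sequence of traces $F_n \upto k$ of the given ideal; the universal object $\mathcal I_0$ is then the \emph{ideal} generated by one downward-closed $F_\sigma$ set (i.e., the finite unions of its elements), and the weak RK-reduction $\beta$ is written down explicitly coordinate-by-coordinate, so its form never has to be checked after the fact. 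Because you instead keep the two layers of Theorem~\ref{T1}(2) (a universal downward-closed compact $K_0$, then the sets $A_n$ absorbing the increasing union) and generate your universal object as a \emph{group}, you need two supplementary facts the paper never uses: your downward-closure lemma -- that in $\mathbb Z_2^\omega$ the subgroup generated by a downward-closed set is downward closed, via the identity $w = (w \cap x_1) \btu \cdots \btu (w \cap x_m)$ for $w \leq x_1 \btu \cdots \btu x_m$, which is correct and in fact shows that the group generated and the ideal generated by such a set coincide -- and the observation that every homomorphism produced by the Theorem~\ref{T1} machinery is coordinate-copying, hence of the form $x \mapsto \beta^{-1}(x)$ for a partial $\beta$ as in Definition~\ref{weakRK}. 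Your simplification that the compactness of $\mathbb Z_2^\omega$ renders $\tilde A$ and the bound maps $x \mapsto x^\ast$ unnecessary is also correct, since $F_\sigma$ and $K_\sigma$ coincide there. What your route buys is economy: the hard compactness-and-limit argument is inherited essentially verbatim from Theorem~\ref{T1}(2). What the paper's route buys is a more transparent universal ideal and a reduction that is a weak RK-map by construction rather than by inspection.
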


\begin{remark*}
Since every ideal on $\omega$ is a subgroup of the compact group $\mathbb Z_2^\omega$, Theorem~\ref{T4} implies that every $F_\sigma$ (i.e., $K_\sigma$) ideal is compactly generated.  Since the downward closure of a compact set is also compact, we conclude that every $F_\sigma$ ideal on $\omega$ is the set of finite unions of elements of a downward closed compact subset of $\mathcal P (\omega)$.
\end{remark*}

\begin{proof}[Proof of Theorem~\ref{univeffideal}]
For $k\in \omega$ and $s \in \omega^{<\omega}$, let $A^k_s$ be subsets of $\mathbb Z_2^k$ such that 
\begin{itemize}
\item Each $A^k_s$ is closed downward, i.e., $u \leq v \in A^k_s \implies u \in A^k_s$.

\item If $A \subseteq \mathbb Z_2^k$ is closed downward and $A \supseteq A^k_s$, then there exists $i$ such that $A = A^k_{s \concat i}$.
\end{itemize}
For each $k,j$, let $I^k_j$ be an interval in $\omega$ of length $k$, such that the $I^k_j$ partition $\omega$.  Define $A \subseteq \mathbb Z_2^\omega$ by 
\[ 
x \in A \iff (\exists n) (\forall k,s) (\len s \geq n \implies x \upto I^k_s \in A^k_s).
\]
Observe that $A$ is $F_\sigma$ and hence so is the ideal $\mathcal I_0$, generated by $A$.  Note that $A$ is already closed downward and thus $\mathcal I_0$ is the set of finite unions of elements of $A$.  We will show that $\mathcal I_0$ is $\leqRK^+$-complete among $F_\sigma$ ideals.

Let $\mathcal I = \bigcup_n F_n$ be an arbitrary $F_\sigma$ ideal.  We may assume that $F_0 \subseteq F_1 \subseteq \ldots$ and that each $F_n$ is closed downward.  (Since the downward closure of a closed set is also closed.)  For each $k$, choose $\alpha_k \in \omega^\omega$ such that for each $n$,
\[ 
F_n \upto k = A^k_{\alpha_k \upto n}.
\]
Let $S = \bigcup_{s \subset \alpha_k} I^k_s$.  We will define a weak RK-reduction $\beta : S \rightarrow \omega$ which will witness $\mathcal I \leqRK^+ \mathcal I_0$.  For each $I^k_s$, with $s \subset \alpha_k$, if $i$ is the $j$th element of $I^k_s$, we set $\beta (i) = j$.  We can re-write the map $x \mapsto \beta^{-1}(x)$ in a way that will be easier to work with.  Observe that 
\[ 
\beta^{-1}(x) \upto I^k_s =
\begin{cases}
x \upto k \quad &\mbox{if } s \subset \alpha_k,\\
0^k &\mbox{otherwise.}
\end{cases}
\]
The following two claims will complete the proof.

\begin{claim1}
If $x \in \mathcal I$, then $\beta^{-1}(x) \in \mathcal I_0$.
\end{claim1}

\proofclaim  Suppose that $x \in \mathcal I$, with $x \in F_{n_0}$.  This implies that, for each $k$ and $s \subset \alpha_k$, with $n=\len s \geq n_0$, we have
\begin{align*}
\beta^{-1} \upto I^k_s
&= x \upto k \\
&\in F_n \upto k \\
&= A^k_{\alpha_k \upto n}.
\end{align*}
If $s \not\subset \alpha_k$, then $\beta^{-1} (x) \upto I^k_s = 0^k \in A^k_s$, since $A^k_s$ is closed downwards.  Putting these two cases together, we see that
\[ 
(\forall k,s) (\len s \geq n_0 \implies \beta^{-1}(x) \upto I^k_s \in A^k_s).
\]
Hence $\beta^{-1} (x) \in A \subseteq \mathcal I_0$.  This proves our first claim.

\begin{claim2}
If $\beta^{-1}(x) \in \mathcal I_0$, then $x \in \mathcal I$.
\end{claim2}

\proofclaim  Suppose that $\beta^{-1}(x) \in \mathcal I_0$ and $y_1, \ldots , y_m \in A$ are such that $\beta^{-1}(x) = y_1 \cup \ldots \cup y_m$.  We will find $x_1 , \ldots , x_m \in \mathcal I$ such that $x = x_1 \cup \ldots \cup x_m$.  Let $n$ be such that for each $i \leq m$, 
\[ 
(\forall k,s) (\len s \geq n \implies y_i \upto I^k_s \in A^k_s).
\]
Let $v^k_i = y_i \upto I^k_{\alpha_k \upto n}$.  For each $k$ and all $i \leq m$, $v^k_i \in A^k_{\alpha_k \upto n} = F_n \upto k$.  Hence there exists $x^k_i \in F_n$ such that $v^k_i = x^k_i \upto k$.  By repeated use of the compactness of $\mathbb Z_2^\omega$, we choose a subsequence $k_0 < k_1 < \ldots$ and $x_i \in F_n$ such that, for each $i \leq m$
\[ 
\lim_{p \rightarrow \infty} x^{k_p}_i = x_i.
\]
To check that $x = x_1 \cup \ldots \cup x_m$, observe that, for each fixed $\ell$ and $p$ with $k_p \geq \ell$, we have
\begin{align*}
x \upto \ell 
&= (\beta^{-1}(x) \upto I^{k_p}_{\alpha_{k_p} \upto n}) \upto \ell \\
&= (v^{k_p}_1 \cup \ldots \cup v^{k_p}_m) \upto \ell \\
&= (x^{k_p}_1 \cup \ldots \cup x^{k_p}_m) \upto \ell .
\end{align*}
Taking the limit as $p \rightarrow \infty$, we see that 
\[ 
x \upto \ell = (x_1 \cup \ldots \cup x_m) \upto \ell. 
\]
Since $\ell$ was arbitrary, we must have $x = x_1 \cup \ldots \cup x_m$.  This shows that $x \in \mathcal I$ and completes the proof.
\end{proof}

%%%%%%%%%%%%%%%%%%%%%%%

%%%%%%%%%%%%%%%%%%%%%%%

\bibliographystyle{plain}
\bibliography{references}

\begin{thebibliography}{10}

\bibitem{BEROS.universal.projective}
Konstantinos~A. Beros.
\newblock Universal subgroups at each projective level.
\newblock preprint, 2013, \\ \url{http://arxiv.org/abs/1306.4940}.

\bibitem{CONWAY.functional}
John~B. Conway.
\newblock {\em A Course in Functional Analysis}, volume~96 of {\em Graduate
  Texts in Mathematics}.
\newblock Springer-Verlag, 2nd edition, 1990.

\bibitem{ENFLO.approximation.problem}
Per Enflo.
\newblock A counterexample to the approximation problem in banach spaces.
\newblock {\em Acta Mathematica}, 130:309--317, 1973.

\bibitem{GOWERS-MAUREY.unconditional}
William~Timothy Gowers and Bernard Maurey.
\newblock The unconditional basis problem.
\newblock {\em Journal of the American Mathematical Society}, 6(4):851--874,
  October 1993.

\bibitem{HRUSAK.combinatorics.filters}
Michael Hru\u{s}\'{a}k.
\newblock Combinatorics of filters and ideals.
\newblock {\em Contemporary Mathematics}, 533:29--69, 2011.

\bibitem{KAN.borel.equiv}
Vladimir~G. Kanovei.
\newblock {\em Borel Equivalence Relations: Structure and Classification},
  volume~44 of {\em University Lecture Series}.
\newblock American Mathematical Society.

\bibitem{KECHRIS.dst}
Alexander~S. Kechris.
\newblock {\em Classical Descriptive Set Theory}, volume 156 of {\em Graduate
  Texts in Mathematics}.
\newblock Springer-Verlag, 1995.

\bibitem{LANG.algebra}
Serge Lang.
\newblock {\em Algebra}, volume 211 of {\em Graduate Texts in Mathematics}.
\newblock Springer-Verlag, 2002.

\bibitem{MILLER.countable.groups}
Arnold~W. Miller.
\newblock Countable subgroups of {E}uclidean space.
\newblock preprint, 2013, \\ \url{http://arxiv.org/abs/1305.5234}.

\bibitem{MUN.topology}
James Munkres.
\newblock {\em Topology}.
\newblock Prentice Hall, 2nd edition, 2000.

\bibitem{ROSENDAL.cofinal}
Christian Rosendal.
\newblock Cofinal families of borel equivalence relations and quasiorders.
\newblock {\em Journal of Symbolic Logic}, 70(4):1325--1340, December 2005.

\end{thebibliography}
\end{document}